\definecolor{citation}{rgb}{0.2,0.58,0.2} 
\definecolor{formula}{rgb}{0.1,0.2,0.6}
\definecolor{url}{rgb}{0.3,0,0.5}
\theoremstyle{plain}
\newtheorem{coro}{\bf Corollary}[section]
\newtheorem{theo}{Theorem}
\newtheorem{lem}[coro]{Lemma}
\newtheorem{prop}[coro]{Proposition}
\newtheorem{rem}[coro]{Remark}
\theoremstyle{definition}
\def\namedlabel#1#2{\begingroup
    #2%
    \def\@currentlabel{#2}%
    \phantomsection\label{#1}\endgroup
}
\newcommand{\settheoremtag}[1]{
  \let\oldthetheorem\thetheorem
  \renewcommand{\thetheorem}{#1}
  \g@addto@macro\endtheorem{
    \addtocounter{theorem}{-1}
    \global\let\thetheorem\oldthetheorem}
  }
\definecolor{darkgreen}{rgb}{0.00, 0.50, 0.00} 
\definecolor{airforceblue}{rgb}{0.36,0.54, 0.66}
\DeclareMathOperator*{\argmin}{arg\,min}
\newcommand{\nocontentsline}[3]{}
\newcommand{\tocless}[2]{\bgroup\let\addcontentsline=\nocontentsline#1{#2}\egroup}
\def\mean#1{\mathchoice%
          {\mathop{\kern 0.2em\vrule width 0.6em height 0.69678ex depth -0.58065ex
                  \kern -0.8em \intop}\nolimits_{\kern -0.4em#1}}%
          {\mathop{\kern 0.1em\vrule width 0.5em height 0.69678ex depth -0.60387ex
                  \kern -0.6em \intop}\nolimits_{#1}}%
          {\mathop{\kern 0.1em\vrule width 0.5em height 0.69678ex
              depth -0.60387ex
                  \kern -0.6em \intop}\nolimits_{#1}}%
          {\mathop{\kern 0.1em\vrule width 0.5em height 0.69678ex depth -0.60387ex
                  \kern -0.6em \intop}\nolimits_{#1}}}
\newcommand{\prox}{{\rm prox}}          
\newcommand{\dv}{{\rm div}}
\newcommand{\cP}{{\mathscr{P}}}
\newcommand{\cN}{{\mathscr{N}}}
\newcommand{\cF}{{\mathcal{F}}}
\newcommand{\cG}{{\mathcal{G}}}
\newcommand{\cFV}{{\cF_V}}
\newcommand{\cFW}{{\cF_W}}
\newcommand{\cFE}{{\cF_{\mathcal{E}}}}
\newcommand{\wt}{\widetilde}
\newcommand{\ve}{\varepsilon}
\newcommand{\vp}{\varphi}
\newcommand{\vt}{\vartheta}
\newcommand{\vr}{\varrho}
\newcommand{\rhok}{\vr_k}
\newcommand{\vrk}{\vr_k}
\newcommand{\vrki}{\vr_{k+1}}
\newcommand{\Xk}{X_k}
\newcommand{\Xki}{X_{k+1}}
\newcommand{\Xkip}{X_{k+\frac{1}{2}}}
\newcommand{\vrkN}{\vr_{k}^{N}}
\newcommand{\vrkp}{\vr_{k+\frac{1}{2}}}
\newcommand{\proxv}{\mathrm{prox}_V^{\tau}}
\newcommand{\Law}{{\rm Law}}
\newcommand{\Xpkph}{\widehat X_{k+\frac{1}{2}}}
\newcommand{\Xpkh}{\widehat X_{k}}
\newcommand{\Xpkoh}{{\widehat{X}_{k+1}}}
\newcommand{\muinfty}{{\overline{\mu}_\infty}}
\newcommand{\Xti}{X_t^i}
\newcommand{\Xtj}{X_t^j}
\newcommand{\Yti}{Y_t^i}
\newcommand{\Xri}{X_r^i}
\newcommand{\Xrj}{X_r^j}
\newcommand{\Yri}{Y_r^i}
\newcommand{\Yrj}{Y_r^j}
\newcommand{\Yrk}{Y_r^k}
\newcommand{\Xsi}{X_s^i}
\newcommand{\Ysi}{Y_s^i}
\newcommand{\Bti}{B_t^i}
\newcommand{\Yt}{Y_t}
\newcommand{\tYt}{\wt{Y}_t}
\newcommand{\Yr}{Y_r}
\newcommand{\Bt}{B_t}
\newcommand{\mut}{\mu_t}
\newcommand{\vtij}{\vartheta_{i,j}}
\newcommand{\vtik}{\vartheta_{i,k}}
\def\R{{\mathbb{R}}}
\def\Ex{{\mathbb{E}}}
\def\N{{\mathbb{N}}}
\def\Rd{{\R^d}}
\def\r{{\R}}
\def\dx{{\,\mathrm{d}x}}
\def\dy{{\,\mathrm{d}y}}
\def\ds{{\,\mathrm{d}s}}
\def\dmu{{\,\mathrm{d}\mu}}
\def\d{{\,\mathrm{d}}}
\def\dt{{\,\mathrm{d}t}}
\def\dr{{\,\mathrm{d}r}}
\def\dt{{\,\mathrm{d}t}}
\def\rn{{\R^{n}}}
\def\id{{\mathsf{Id}}}
\title[Convergence rates for granular medium equations]{Convergence rates of particle approximation\\ of forward-backward splitting algorithm\\ for granular medium equations}
\author{Matej Benko}\address{Matej Benko\\ 
Institute of Mathematics, Faculty of Mechanical Engineering, Brno University of Technology\\ Technická 2896/2, 616 69 Brno, Czech Republic \\
\texttt{e-mail: matej.benko@vutbr.cz}}
\author{Iwona Chlebicka}\address{Iwona Chlebicka \\
Institute of Applied Mathematics and Mechanics, University of Warsaw \\ ul. Banacha 2, 02-097 Warsaw, Poland\\  \texttt{e-mail: i.chlebicka@mimuw.edu.pl}} 
\author{J\o{}rgen Endal}\address{J\o{}rgen Endal\\
Department of Mathematical Sciences, Norwegian University of Science and Technology (NTNU)\\
N-7491 Trondheim, Norway\\ \texttt{e-mail: jorgen.endal@ntnu.no}}
\author{B\l{}a\.z{}ej Miasojedow}\address{B\l{}a\.z{}ej Miasojedow \\
Institute of Applied Mathematics and Mechanics, University of Warsaw \\ ul. Banacha 2, 02-097 Warsaw, Poland\\  \texttt{e-mail: b.miasojedow@mimuw.edu.pl}}
\begin{document}

\begin{abstract}
We study the spatially homogeneous granular medium equation 
\[\partial_t\mu=\dv(\mu\nabla V)+\dv(\mu(\nabla W \ast \mu))+\Delta\mu\,,\] 
within a large and natural class of the confinement potentials $V$ and interaction potentials $W$. The considered problem do not need to assume that $\nabla V$ or $\nabla W$ are globally Lipschitz.

With the aim of providing particle approximation of solutions, we design efficient forward-backward splitting algorithms. Sharp convergence rates in terms of the Wasserstein distance are provided.

\end{abstract}

\maketitle
\setcounter{tocdepth}{1}
\tableofcontents

\section{Introduction}

\normalcolor

We show sharp convergence rates in terms of the Wasserstein distance of a new forward-backward splitting algorithm for the following {\it spatially homogeneous} granular medium equation
\begin{equation}
\label{eq:model} 
\begin{cases}\partial_t\mu=\dv(\mu\nabla V)+\dv(\mu(\nabla W \ast \mu))+\Delta\mu &\ \text{in }\ \R^d\times(0,T)\,,\\
\mu(\cdot,0)=\mu_0&\ \text{in }\ \R^d\,
\end{cases}
\end{equation}
with arbitrary datum $\mu_0$  with finite second moment. In the one-dimensional case, this equation  was introduced in a mathematical context to model granular media in \cite{BCP,BCCP}. Therein $\mu(\cdot,t)$ describes the distribution of the velocity of a representative particle (among infinitely many), $V$ represents the friction (depending on the application it is also called the confinement potential), $W$ models
the inelastic collisions between particles with different velocities (also called the interaction potential), while $\Delta \mu$ accounts for a so-called heat bath (or diffusion).  We refer to the survey \cite{Villani2006} (see also the very detailed \cite{Villani-review}) where the above equation with $V(x)=\frac{1}{2}|x|^2$ and $W(x)=\frac{1}{3}|x|^3$ appears in Section 3. There the reader will find a thorough exposition on the physical derivation of the equation as a limit of a  system of many infinitesimal particles colliding inelastically. There is also a recent survey \cite{Gomez-Castro2024} which also considers \eqref{eq:model}, but with possibly different applications in mind. In this broader context, it is called an aggregation-confinement-diffusion equation. Other labels includes nonlinear Fokker--Planck equations, see e.g. \cite{BCCP, surv-I, surv-II, DiMarino-Santambrogio}, and nonlinear McKean--Vlasov PDEs \cite{MALRIEU_log_sobolev}.

The granular medium equation models interactions among particles, incorporating aggregation, diffusion, and self-propulsion mechanisms \cite{CMCV,Villani-review,BCP,BGG}. This direction of study bridges microscopic interactions and macroscopic behavior, influencing physics, engineering, and material science.  A great number of recent contributions were devoted the dynamics of equations like~\eqref{eq:model}, revealing clustering and pattern formation. Our approach employs splitting algorithms that take an advantage of decomposing a relevant function into the sum of two convex functions, alternating between minimizing one function while holding the other fixed.  Noteworthy, the particle approximation of our forward-backward splitting algorithm is proven to keep sharp control on the convergence rates in the terms of the Wasserstein distance.  

To provide the convergence rates, we will use the viewpoint of statistical mechanics. Let us therefore consider a system of $N$ {\it linear} SDEs (stochastic differential equations) on the form 
\begin{equation}\label{eq:particle_system:model}
\begin{cases}
\d \Xti = -\Big(\nabla V(\Xti) + \frac{1}{N}\sum_{\substack{j=1 \\ i \neq j}}^{N} \nabla W (\Xti - \Xtj)\Big) \dt + \sqrt{2 } \, \d \Bti &\ \text{$i=1, \ldots , N$, $t>0$}\,, \\ 
X_0^i\sim\mu_0 &\ \text{$i=1, \ldots , N$}\,, 
\end{cases}
\end{equation}
where $B_t^1,\ldots,B_t^N$ are independent $d$-dimensional standard Brownian motions (Wiener processes). We will consider the case where we relate a particle with each equation, and the particles interact due to the presence of $W$. This gives a particle system $\boldsymbol{X}_t =(X_t^1,\ldots,X_t^N)$ which has law given by $\boldsymbol{\mu}_t \in \cP_2^{\mathrm{ex}} (\R^{dN})$ (exchangeable probability measures with finite second moment) such that 
\begin{equation}\label{Xt,X0-distribution} 
\boldsymbol{X}_t \sim \boldsymbol{\mu}_t \qquad \mbox{and} \qquad \boldsymbol{X}_0 \sim \bigotimes_{i = 1}^N \mu_0  =: \boldsymbol{\mu}_0\, .
\end{equation}
As the number of particles in the system like~\eqref{eq:particle_system:model} tends to infinity, the behavior of any finite subset of particles is expect to become increasingly independent of the rest of the system. Convergence of the empirical measure of the particle system (the distribution function of particles) to a deterministic limit as the number of particles tends to infinity, called {\it
propagation of chaos}, is studied  since~\cite{Kac,Sznitman} and surveyed in~\cite{surv-I,surv-II}. By this idea we approximate  the solution
of the nonlinear PDE~\eqref{eq:model} by the empirical measure of the particle system of linear SDEs \eqref{eq:particle_system:model}, cf.~\cite{CGM,MALRIEU_log_sobolev}. If we restrict our attention to one of these linear SDEs, say equation $j$, we can prove that $X_t^j$ converges strongly as $N\to+\infty$ to the solution of the following {\it nonlinear} SDE of McKean--Vlasov type
\begin{equation} 
\label{eq:mckean_vlasov:model}
\begin{cases}
\mathrm{d} \Yt = -\big(\nabla V(\Yt) + (\nabla W * \mu_t)(\Yt) \big)\dt + \sqrt{2 } \, \d \Bt\,,\qquad \text{$t>0$}\,, \\ 
\mu_t =\Law(\Yt)\,, \\
Y_0\sim \mu_0\,.
\end{cases}
\end{equation} Strictly speaking, using synchronous coupling~\cite[Section 4.1.2]{surv-I} we show that  $X_t^i$ converges strongly as $N\to+\infty$ to $Y_t^i$, for all $i$, where $Y_t^i\sim \mu_t$ is an independent copy solving the above equation with the Brownian motion $\Bt^i$. Moreover, since the $\{Y_t^i\}_i$ are independent copies, we have that $\mu_t^{i,N}\to\mu_t$ as $N\to+\infty$ for every $i=1,\ldots,N$. Hence, we will use the short-hand notation $\cP_2^{\mathrm{ex}} (\R^{d})\ni\mu_t^N:=\mu_t^{i,N}$ for every $i$. We refer to Theorem \ref{theo:chaos} for precise statements. Finally, the density $\mu_t$ is the solution of the nonlinear PDE 
\eqref{eq:model} under study.

In order to construct a numerical scheme to solve \eqref{eq:model} we shall follow the gradient flow approach to PDEs, cf.~\cite{Ambrosio-Gigli-Savare,Figalli-Glaudo,Santambrogio-overview}.  In this approach, the evolution of particle density within the granular medium is modeled as a gradient flow, where the system tends to decrease its energy functional over time.   We define the following functional for $\mu\in\cP(\R^d)$ (probabilistic measures):\begin{equation}
    \label{cF}
    \cF[\mu]:=\cFV[\mu]+  \cFE[\mu]+\cFW[\mu]\,,
\end{equation}
where (denoting $\dmu(x)=\mu(x)\dx$ when a density exists)\begin{flalign}
 \label{cFV}   \cFV[\mu]&:=\int_{\R^d}V(x)\dmu(x)\,,\\
 \label{cFE}   \cFE[\mu]&:=\begin{cases}
        \int_{\R^d}\mu(x)\log\mu(x)\dx\,, &\text{if }\mu\in\cP_{\rm ac}\,,\\
        +\infty\, ,&\text{otherwise}\,,
    \end{cases}\\
 \nonumber  \cFW[\mu]&:=\frac{1}{2}\int_{\R^d}\int_{\R^d} W(x-y)\dmu(x)\otimes\mu(y)\,.
\end{flalign}
We will call $\cFV$ the potential energy, $\cFE$ -- the entropy, while  $\cFW$ -- the nonlocal interaction. The gradient flow perspective enables the development of efficient numerical algorithms based on forward-backward gradient methods for solving granular medium equations.

Let us briefly explain our main idea. In order to reduce the problem, we make use of the approximation of~\eqref{eq:model} by~\eqref{eq:particle_system:model}, obtained via the propagation of chaos approach that allows to let $W\equiv0$. In turn, the corresponding energy, in the gradient flow perspective, consists of the two parts $\cFV$ and $\cFE$, while $\cFW=0$. We alternate steps: one along the gradient flow of $\cFV$ and the next along the gradient flow of $\cFE$. Both gradient flows given by $\cFV$ and $\cFE$ enjoy explicit analytical formulation.  The gradient flow of $\cFV$ is a pushforward of the measure by the proximal operator and the gradient flow of $\cFE$ is given by the Gaussian distribution. This technique is called a splitting algorithm and dates back to~\cite{LM79}. It is known in the context of SDE and PDE, see e.g. \cite{HMS,MS,DMM}. We point out that our main idea is closely related to the proximal gradient method~\cite{Prox-alg}, which was introduced in order to solve optimization problems expressed as variational inequalities. Our approach can be also viewed as a discretization of the Jordan--Kinderlehrer--Otto (JKO)~\cite{JKO} scheme that offers several advantages for solving variational problems associated with the granular medium equation. Firstly, it provides a computationally efficient and numerically stable method for approximating solutions to these problems. Secondly, the JKO scheme naturally incorporates the underlying convex structure of the problem, allowing for straightforward implementation. This approach thus allows us to simulate and analyze the behaviour of granular materials over time and space accurately. 
We provide the first computational method for PDE~\eqref{eq:model} under no global Lipschitz condition on the potentials and also taking into account possible non-exactness of the proximal step.

Numerical methods for solving variants of  equation~\eqref{eq:model} are widely studied in the literature. In the linear case ($W\equiv0$), rates of convergence are indeed obtained for the Euler--Maruyama scheme \cite{KlPl}, the finite volume scheme \cite{Mer0708,N-FSc23}, and the finite difference scheme \cite{KaRi01, ChKa06, KaKoRi12}, but under the assumption of Lipschitz continuity of $\nabla V$. Rates are also obtained for the forward-backward scheme under only a one-sided Lipschitz condition on $\nabla V$ \cite{HMS}. In the case when $W\not\equiv0$, 
there exist several approaches to problems of this type including the blob method~\cite{CCP,KT,CEHT}, the finite volume method \cite{CCY,BCH,CFS,BCJ}, and the primal-dual method~\cite{Carrillo-primal}. All mentioned methods allow for more general diffusion operators. The full discretization of the JKO scheme is the main tool in \cite{Carrillo-primal}. Therein, by leveraging the duality properties, one can design efficient algorithms that update both primal and dual variables iteratively, leading to convergence to the  solution. While our particle approximation approach focuses on simulating the behavior of individual particles within the granular medium, primal-dual methods provide a framework for designing algorithms to solve the associated optimization problem. There is also a relation with scalar conservation laws with nonlocal flux (usually with $d=1$ and without diffusion). We do not intend to make a full picture of that field, but we mention that the finite volume method for entropy solutions is e.g. studied in \cite{AmCoTe15, AgHoVa24} under the assumption that $\nabla W$ is at least Lipschitz continuous. The latter also provides the only rates of convergence we are aware of in this setting, assuming that $\partial_x W\in C^2\cap W^{2,\infty}\cap BV$. Hence, their setup is very different from ours. Recent related results in the SDE setting \cite{ChDR24,ChDRStWi25} are commented on right after Theorem \ref{thm:alg_bound-nonlocal}. 

We also refer to  \cite{Salim-Korba-Luise} for some ideas, that inspired us, which were used therein in the optimization algorithm for a functional over the Wasserstein spaces. In the case of the functional related to~\eqref{eq:model}, their method establishes the approximation of the stationary solution. Such a goal attracts significant attention in the context of Bayesian inference or Machine Learning. The discretization of the linear model ($W\equiv0$) is called Langevin Monte Carlo algorithm and among others was studied in \cite{DM, DMM, Dal-2017,DMP-2018,bernton}, including rates of convergence. At this point, let us also mention the recent contribution \cite{BeChEnMi25}, which provides the theoretical foundation for such algorithms also in the case of non-globally Lipschitz $\nabla V$. Recently in  Machine Learning problems, approximation of the stationary distribution of the full problem~\eqref{eq:model} using the advantages of propagation of chaos, was investigated in \cite{kook2024sampling}.\newline

Our contributions are uniform-in-time rates of convergence of a numerical scheme for~\eqref{eq:model} with fully explicit, computable, and polynomially dependent on $d$ constants in the following two cases:
\begin{itemize}
\item[{\it (i)}] Algorithm \ref{alg:dgf} where $W\equiv 0$ and $\nabla V$ is non-globally Lipschitz.
\item[{\it (ii)}] Algorithm \ref{alg:particle} where $W\not \equiv 0$ and $\nabla V, \nabla W$ are non-globally Lipschitz.  
\end{itemize}
The key mathematical challenges in obtaining the above results were the lack of satisfactory moment bounds for solutions to~\eqref{eq:mckean_vlasov:model} and for the output of Algorithm~\ref{alg:dgf}. We provide improved moment bounds for the
continuous time process (Theorem~\ref{thm:cont-moments}) and novel ones for the related discrete time process (Theorem~\ref{theo:moments}). Our main accomplishments are summarized in Fig.\ref{fig:all-results}. 
Before formulating them we note that we provide our main results assuming that the relevant solutions exist, see Section~\ref{ssec:ex-n-uni} for a discussion on available results in this direction.
Our main assumptions read:
\begin{description}
\item[\namedlabel{V}{(V)}]  $V\in C^1(\R^d,\R)$ has a minimum at $x^*=0$, is $\lambda_V$-convex for $\lambda_V>0$, and $\nabla V$ is of $q_V$-power growth Lipschitz for $q_V\geq 1$.

\item[\namedlabel{W}{(W)}] $W\in C^1(\R^d,\R)$ is convex, radially symmetric, and $\nabla W$ is of $q_W$-power growth  Lipschitz for $q_W\geq 1$.

\item[\namedlabel{mu0}{($\boldsymbol{\mu_0}$)}] The initial datum $\mu_0\in\cP_2(\r^d)$ also has an $m_0$-moment bound, i.e., $\mu_0(|\cdot|^{m_0})<+\infty$ for some $m_0$ specified later to be large enough and dependent on $V$ and $W$. 
\end{description}

\begin{figure}
\pdfimageresolution=100
\includegraphics[width=\textwidth]{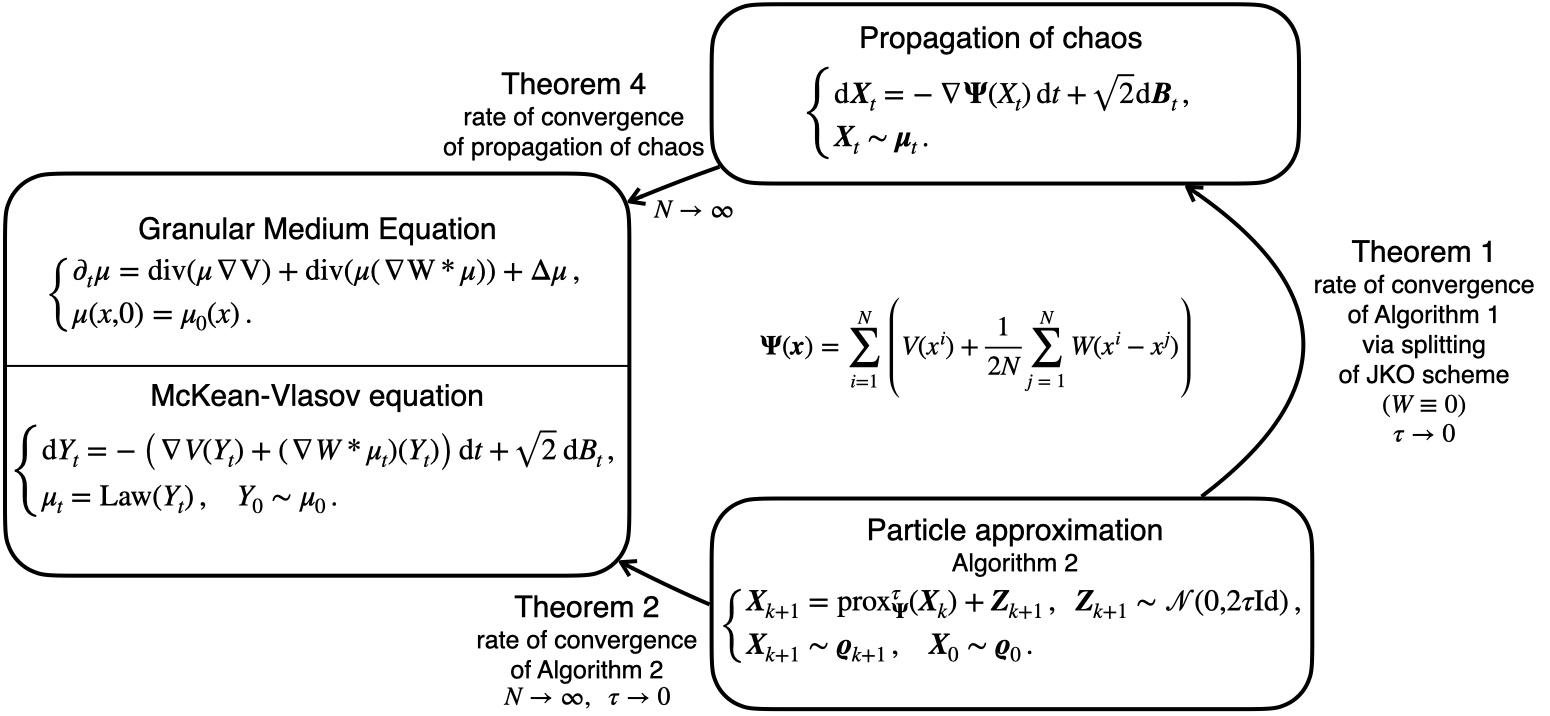}
    \caption{Summary of our approach positioning the content of Theorem~\ref{thm:alg_bound-intro} provided for Algorithm~\ref{alg:dgf}, together with Theorems~\ref{thm:alg_bound-nonlocal} and~\ref{theo:chaos} for Algorithm~\ref{alg:particle}. }   
    \label{fig:all-results}
\end{figure}

The following comments on our assumptions are in order.
\begin{enumerate}[{\it (i)}]

\item In the Appendix, one can find precise definitions of $\lambda$-convexity (see~\eqref{lambda-conv}) and $q$-power growth Lipschitz property for $q\geq 1$ (see~\eqref{f-q-growth-Lip}). In the latter case, we remark that the assumption on the $q$-power growth Lipschitzianity for $q>1$ does not require the functions to be globally Lipschitz (i.e. locally Lipschitz is enough).

\item Our assumptions on $V,W$ include potentials on the form $|x|^a$ with $a>1$, i.e., the attractive ones.

\item Without loss of generality, we can indeed assume that the minimum of $V$ is at $x^*=0$. Indeed, if this was not the case, we could consider $\widetilde{V}(x)=V(x+x^*)$. Then $\widetilde{\vr}(x,t)=\vr(x+x^*,t)$ satisfies a PDE of the same form as \eqref{eq:model}, but with $\widetilde{V}$, and $\wt\Yt=Y_t-x^*$ satisfies a McKean--Vlasov SDE of the same form as \eqref{eq:mckean_vlasov:model}, but with $\wt V$.
\item The assumptions on the differentiability of $V$ and $W$ can be omitted, and the gradient can be replaced by a subgradient. However, the theory about existence and uniqueness of the solution to \eqref{eq:model} in general is not known. We show results assuming $V,W\in C^1(\R^d,\R)$ to simplify the presentation. 
\end{enumerate}

\noindent{\bf Algorithms.} We consider two algorithms to numerically compute the gradient flow $\mu_t$ along \eqref{eq:model} in the cases $W\equiv0$ and $W\not\equiv0$, respectively. By $Y_t \sim \mu_t$ we mean the true solution of the SDE \eqref{eq:mckean_vlasov:model} whose density is the gradient flow of \eqref{eq:model}.

In the case $W\equiv0$, we will, for $k=1,\ldots,n$, denote by $X_k \sim \vr_k$ the result of our algorithm. Additionally, $X_0 \sim \vr_0=\mu_0$ and the time step $\tau$ will be chosen such that $T=n\tau$.  
Thus, we propose the algorithm based on the operator splitting technique consisting of two steps: 
\begin{enumerate}[(1)]
    \item Application of the proximal operator ($\proxv$ is defined in~\eqref{def:proximal}).
    \item Adding Gaussian noise $g_{\tau}$.
\end{enumerate}
See the scheme proposed in Algorithm~\ref{alg:dgf}, whose rates of convergence are given by Theorem~\ref{thm:alg_bound-intro}. Moreover, Theorem~\ref{theo:moments} establishes a bound on the moments of the Markov Chain produced by the algorithm. Note also that the pushforward $\#$ is defined in \eqref{eq:pushforward}.

\begin{algorithm}
\caption{Proximal Split Discretized Gradient Flow}\label{alg:dgf}
\begin{algorithmic}
\State Sample initial distribution $X_0 \sim \vr_0=\mu_0$
\For{$k=0, \ldots , n-1$}
\State  $\begin{array}{ll}
(1) \quad X_{k+\frac{1}{2}} := \prox^\tau_V (X_k)&\qquad\triangleright\quad \vr_{k+\frac{1}{2}} := (\prox^\tau_{V})_{\#} \vr_k \\
(2) \quad X_{k+1} := X_{k+\frac{1}{2}} + Z_{k+1}; \quad Z_{k+1} \sim g_{\tau} & \qquad\triangleright\quad \vr_{k+1} := \vr_{k+\frac{1}{2}} * g_{ \tau}
\end{array}$
\EndFor
\end{algorithmic}
\end{algorithm}

In the second case, namely for $W\not\equiv0$, we use the {\it propagation of chaos} to replace \eqref{eq:mckean_vlasov:model} with the system of $N$ linear SDEs \eqref{eq:particle_system:model} in which any marginal of the solution (which is assumed to be exchangeable) converges to $\mu_t$ as $N \to +\infty$, where $\mu_t$ solves \eqref{eq:model}. Let us define $\boldsymbol{\Psi}:\R^{dN}\to \R^N$  via the following formula
\begin{equation}\label{def-PsiN}
   \boldsymbol{\Psi}(\boldsymbol{x}):= \sum_{i=1}^N\left(V(x^i) +  \frac{1}{2N}\sum_{\substack{j=1}}^{N}  W (x^i-x^j)\right) \qquad\text{for }\ \boldsymbol{x}\in\R^{dN}\,. 
\end{equation}
We note that if $V$ attains minimum at $x^*=0$, then the same can be assumed for $\boldsymbol{\Psi}$. By Lemmas~\ref{lem:Psi1} and~\ref{lem:Psi2}, assumptions \ref{V} and \ref{W} implies that $\boldsymbol{\Psi}$ also satisfies \ref{V}. Using this notation the system of linear SDEs \eqref{eq:particle_system:model} can be written as follows
\begin{equation}\label{eq:particle_system}
\d \Xti = 
-\partial_{x^i}\boldsymbol{\Psi}(\boldsymbol{X}_t)\d t+ \sqrt{2 } \, \d \Bti\,\,,\qquad i=1, \ldots , N \, .
\end{equation}
At this point, we see that $\boldsymbol{\Psi}$ plays the role of $V$ in \eqref{eq:mckean_vlasov:model} (with $W\equiv0$), and thus we use the idea of Algorithm \ref{alg:dgf} to design an algorithm which approximates the linear system of SDEs, that will be called Algorithm~\ref{alg:particle}. To this end, with a slight abuse of notation, for $k=1,\ldots,n$, we denote by $\boldsymbol{X}_k \sim \boldsymbol{\vr}_k$ the result of our new algorithm. Additionally, $\boldsymbol{X}_0 \sim \boldsymbol{\vr}_0=\boldsymbol{\mu}_0$ (see \eqref{Xt,X0-distribution}) and the time step $\tau$ will be chosen such that $T=n\tau$.  Convergence rates for Algorithm~\ref{alg:particle} are provided in Theorem~\ref{thm:alg_bound-nonlocal}.
\begin{algorithm}
\caption{Proximal Split Discretized Particle Approximation of the Gradient Flow}\label{alg:particle}
\begin{algorithmic}
\State Sample initial distribution $\boldsymbol{X}_0 \sim \boldsymbol{\vr}_0=\boldsymbol{\mu}_0$
\For{$k=0, \ldots , n-1$}
\State  $\begin{array}{ll}
(1) \quad \boldsymbol{X}_{k+\frac{1}{2}} := \prox^\tau_{\boldsymbol{\Psi}} (\boldsymbol{X}_k)&\qquad\triangleright\quad \boldsymbol{\vr}_{k+\frac{1}{2}} := (\prox^\tau_{\boldsymbol{\Psi}})_{\#} \boldsymbol{\vr}_k \\
(2) \quad \boldsymbol{X}_{k+1} := \boldsymbol{X}_{k+\frac{1}{2}} + \boldsymbol{Z}_{k+1}; \quad \boldsymbol{Z}_{k+1} \sim \boldsymbol{g}_{\tau} & \qquad\triangleright\quad \boldsymbol{\vr}_{k+1} := \boldsymbol{\vr}_{k+\frac{1}{2}} * \boldsymbol{g}_{\tau}
\end{array}$
\EndFor
\end{algorithmic}
\end{algorithm}

\bigskip

\noindent{\bf Main result for the local problem (when $W\equiv0$).} Let us present our first result on the error bounds for our discretization of solutions $\mu$ to \eqref{eq:model}. We stress that we provide a bound for $ W_2^2 (\vrk, \mu(t))$ which is uniform in time, polynomial with dimension $d$ (with an exponent governed by the growth of potential~$V$), and linear with respect to the length of discretization step $\tau$.

\begin{theo} \label{thm:alg_bound-intro}
    Let $\overline{\mu}_\infty$ be the unique minimizer of $\cF$ given by~\eqref{cF} (with $W\equiv0$) for $V$ that satisfies \ref{V} with $\lambda_V > 0$ and $q_V\geq 1$. Assume further that $\mu$ is a gradient flow solution to~\eqref{eq:model} with $W\equiv 0$ and initial datum $\mu_0\in\cP_2(\R^d)\cap \cP_{q_{V}-1}(\R^d)$. Let $\vr_k$ be an output of Algorithm~\ref{alg:dgf} for every $k=1,\ldots,n\,$.    
    Then there exist $c_1,c_2,\tau_0>0$ depending only on $V,\mu_0,\overline{\mu}_\infty$, such that for all $\tau\in( 0,\tau_0)$, all $k=2,\dots,n$, and $t\in((k-1)\tau, k\tau ]$ the following inequality holds true
\[
W_2^2 (\vrk, \mu(t)) \leq \min\left\{c_1(1+t)d^{\frac{q_V+1}{2}}\tau, c_2 \left(\mathrm{e}^{-\frac{\lambda_V}{2}t}+ d^{\frac{q_V+1}{2}}\tau\right)\right\}\, .
\]
\end{theo}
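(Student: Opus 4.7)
The plan is to bound $W_2^2(\vr_k,\mu(k\tau))$ by comparing each algorithm iteration to one step of the true SDE semigroup via synchronous coupling, then deal with the last interval $((k-1)\tau,k\tau]$ by continuity of $t\mapsto\mu(t)$ in $W_2$. The two expressions in the $\min$ will emerge from the \emph{same} one-step recursion, summed in two different ways.

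\textbf{Step 1 (contractivity of one iteration).} Because $V$ is $\lambda_V$-convex, the proximal map satisfies the nonexpansive bound
\[
|\prox^\tau_V(x)-\prox^\tau_V(y)| \le \tfrac{1}{1+\tau\lambda_V}|x-y|,
\]
which transfers to Wasserstein through any coupling, giving $W_2((\prox^\tau_V)_\#\mu,(\prox^\tau_V)_\#\nu)\le(1+\tau\lambda_V)^{-1}W_2(\mu,\nu)$. Convolution with $g_\tau$ is 1-Lipschitz in $W_2$ (again by coupling), so one whole iteration of Algorithm~\ref{alg:dgf} contracts $W_2$ by the factor $(1+\tau\lambda_V)^{-1}$.

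\textbf{Step 2 (one-step discretization error).} Let $\mu^{\mathrm{disc}}_{k+1}$ be the law obtained by applying one iteration of the algorithm to $\mu(k\tau)$. I would realize it via synchronous coupling with the McKean--Vlasov SDE~\eqref{eq:mckean_vlasov:model} (with $W\equiv0$): sample $Y_{k\tau}\sim\mu(k\tau)$, set $\widetilde X_{k+\frac12}=\prox^\tau_V(Y_{k\tau})$ and $\widetilde X_{k+1}=\widetilde X_{k+\frac12}+\sqrt{2}(B_{(k+1)\tau}-B_{k\tau})$. Using the resolvent identity $\widetilde X_{k+\frac12}=Y_{k\tau}-\tau\nabla V(\widetilde X_{k+\frac12})$ and the SDE for $Y$ with the same Brownian increment yields
\[
\widetilde X_{k+1}-Y_{(k+1)\tau}=\int_{k\tau}^{(k+1)\tau}\!\bigl(\nabla V(Y_s)-\nabla V(\widetilde X_{k+\frac12})\bigr)\,\mathrm{d}s.
\]
By the $q_V$-power growth Lipschitzianity of $\nabla V$ and Cauchy--Schwarz, the expected square is controlled by $\tau^2$ times an integrated product of $(1+|Y_s|^{q_V-1}+|\widetilde X_{k+\frac12}|^{q_V-1})^2$ and $|Y_s-\widetilde X_{k+\frac12}|^2$. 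The first factor is absorbed using uniform moment bounds on $Y_s$ (from Itô on $|Y_s|^m$, which closes thanks to $\lambda_V$-convexity); the second is split as $|Y_s-Y_{k\tau}|^2+\tau^2|\nabla V(\widetilde X_{k+\frac12})|^2$, each of order $\tau d$ after taking expectation of the Brownian displacement, which is the only place dimension enters. Expanding all terms and using moment bounds up to order $2q_V$ leads to
\[
W_2^2\bigl(\mu^{\mathrm{disc}}_{k+1},\mu((k+1)\tau)\bigr)\le C\,\tau^3\,d^{q_V+1},
\]
with $C$ depending on $V,\mu_0,\muinfty$. The key observation for the dimension exponent is $\Ex[|Y_s|^{2q_V}]\lesssim d^{q_V}$, coming from the Gaussian-like tails of the stationary measure (hence the $(q_V+1)/2$ in the final $W_2^2$ bound).

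\textbf{Step 3 (assemble the recursion and iterate).} Combining Steps 1 and 2 via the triangle inequality and Young's inequality $(a+b)^2\le(1+\tau\lambda_V)a^2+(1+(\tau\lambda_V)^{-1})b^2$, one gets the clean recursion
\[
W_2^2\bigl(\vr_{k+1},\mu((k+1)\tau)\bigr)\le\frac{1}{1+\tau\lambda_V}\,W_2^2\bigl(\vr_k,\mu(k\tau)\bigr)+C'\,\tau^2\,d^{q_V+1}.
\]
Iterating with the initial equality $\vr_0=\mu_0$ and summing the geometric series gives the second (exponential) bound $c_2\bigl(\mathrm{e}^{-\lambda_V t/2}+d^{(q_V+1)/2}\tau\bigr)$; summing instead the trivial $n$ terms bounds the error by $nC'\tau^2 d^{q_V+1}\lesssim t\,\tau\,d^{q_V+1}$, which is the first (linear-in-$t$) bound. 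Finally, for $t\in((k-1)\tau,k\tau]$ the extra error $W_2^2(\mu(k\tau),\mu(t))$ is handled by the standard Itô estimate on $\Ex|Y_{k\tau}-Y_t|^2$, which is of order $\tau(1+\Ex|Y|^{2q_V})$, hence absorbed into the same $d^{(q_V+1)/2}\tau$ term (adjusting constants), and the factor $(1+t)$ covers the initial piece where $t<\tau$.

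\textbf{Main obstacle.} The delicate part is Step~2: carefully separating the $\sqrt{\tau}$ Brownian displacement from the $\tau$-drift contribution and propagating moments of order $2q_V$ through the iteration without losing the correct exponent of $d$. Uniform-in-$k$ moment control of $\vr_k$ (provided by Theorem~\ref{theo:moments}) and of $Y_s$ must be exploited at every application of the $q_V$-power growth bound, so that the one-step error has constants independent of $k$ and scales only polynomially in $d$; otherwise the geometric summation would fail to yield the asymptotic error $d^{(q_V+1)/2}\tau$.
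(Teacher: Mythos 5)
The paper takes an energy-functional route, not a pathwise-coupling route: it invokes \cite[Theorem~4.1]{bernton} to convert $W_2^2(\vrk,\mu(t))$ into a telescoping sum $\Delta_\tau^k = \sum_i \big(\cFV[\vr_{i+\frac12}\ast g_\tau] - \cFV[\vr_{i+\frac12}]\big)$, bounds each summand via Lemma~\ref{lemma:dmm3}\emph{(ii)}, and gets the exponential branch by a triangle inequality \emph{through $\muinfty$} combined with the discrete EVI estimate of Lemma~\ref{lemma:dmm3}\emph{(vi)}. Your synchronous-coupling approach is genuinely different, and while the contraction-plus-local-error recursion in Steps~1 and~3 is structurally sound, there are three concrete issues with Step~2 and the way Step~3 is asserted.

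\textbf{Moments.} Your one-step estimate bounds $\Ex\,|\nabla V(Y_s)-\nabla V(\widetilde X_{k+\frac12})|^2$ by $L_{q_V}^2\,\Ex\big[|Y_s-\widetilde X_{k+\frac12}|^2(1+|Y_s|^{q_V-1}+|\widetilde X_{k+\frac12}|^{q_V-1})^2\big]$, which after Cauchy--Schwarz (or independence splitting of the Brownian increment from $Y_{k\tau}$) requires $\Ex\,|Y|^{2(q_V-1)}$ or even $\Ex\,|Y|^{4(q_V-1)}$ to close. The theorem assumes only $\mu_0\in\cP_2\cap\cP_{q_V-1}$. The paper avoids this by computing $\cFV[\nu\ast g_\tau]-\cFV[\nu]=\int\!\!\int (V(x+y)-V(x))\,g_\tau(y)\,\mathrm{d}y\,\mathrm{d}\nu(x)$ and using symmetry of $g_\tau$ to annihilate the linear term $\nabla V(x)\cdot y$ \emph{before} any squaring (see Lemma~\ref{lem:grad-f-q-growth} and Lemma~\ref{lemma:dmm3}\emph{(ii)}); the residual carries only $\nu(|\cdot|^{q_V-1})$. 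Your coupling quantity $\Ex|\widetilde X_{k+1}-Y_{(k+1)\tau}|^2$ is intrinsically a squared object, so this cancellation is unavailable and the moment bar rises.

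\textbf{Dimension exponent.} The same squaring gives you the scaling $\Ex\,|Y_{k\tau}|^{2(q_V-1)}\sim d^{q_V-1}$ multiplied by $\Ex\,|B_s-B_{k\tau}|^2\sim\tau d$, i.e.\ a one-step error of order $\tau^3 d^{q_V}$ and a final bound of order $\tau\, d^{q_V}$. The theorem states $\tau\,d^{(q_V+1)/2}$, and the paper obtains it precisely because it uses $\nu(|\cdot|^{q_V-1})\sim d^{(q_V-1)/2}$ (moments of order $q_V-1$, not $2(q_V-1)$). You assert the $(q_V+1)/2$ exponent in the final sentence of Step~2 but the preceding computation does not deliver it; for $q_V>1$ the coupling route as written loses a power of $d$.

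\textbf{Exponential term.} Your statement that \emph{``iterating with $\vr_0=\mu_0$ and summing the geometric series gives the second (exponential) bound''} is not correct: with $\vr_0=\mu_0$ the first term of the iterated recursion is $\big(\tfrac{1}{1+\tau\lambda_V}\big)^n W_2^2(\vr_0,\mu_0)=0$, and the geometric summation of the $\mathcal{O}(\tau^2)$ increments yields a time-uniform $\mathcal{O}(\tau)$ bound with no $\mathrm{e}^{-\lambda_V t/2}$ factor. The paper produces the exponential by writing $W_2^2(\vr_k,\mu(t))\le 2W_2^2(\vr_k,\muinfty)+2W_2^2(\muinfty,\mu(t))$ and using the continuous flow's exponential contraction towards $\muinfty$ and the discrete counterpart from Lemma~\ref{lemma:dmm3}\emph{(vi)}; the quantity that decays is $W_2^2(\mu_0,\muinfty)$, not $W_2^2(\vr_0,\mu_0)$. (A clean time-uniform $\mathcal{O}(\tau)$ bound, if you really had it with the stated moment hypotheses and dimension exponent, would of course imply both branches of the $\min$ after adjusting $c_1,c_2$, but you have not established it and your stated mechanism for the exponential branch does not work.)
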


We have the following comments on the above result.
\begin{enumerate}[{\it (i)}]

\item In the case $k=1$, we have the estimate
\[
W_2^2 (\vr_{1}, \mu(t)) \leq c_1d^{\frac{q_V+1}{2}}(1+\tau)\tau\qquad\text{for }\ 0<t\leq \tau\,.
\]
\item The explicit expressions for the constants are
\[
c_1=6\big(\cF [\mu_0] - \cF [\muinfty]\big) +  6C \big(1+{\lambda_V}^{-\frac{q_V-1}{2}}\big)
\]
and
\[
c_2=4W_2^2(\mu_0,\overline{\mu}_\infty)+C\lambda_V^{-1}\big(1+\lambda_V^{-\frac{q_V-1}{2}}\big)\,,
\]
where $C=C\big(q_V,\mu_0(|\cdot|^{q_V-1})\big)$.
\item Typically Lipschitz continuity of $\nabla V$ is required to stabilize a discretization of a step corresponding to the $V$-part. Thanks to the stability of implicit discretization provided by proximal step, we can handle also non-Lipschitz potentials.
\item Under the regime of Theorem~\ref{thm:alg_bound-intro} there exists a unique minimizer $\muinfty$ of $\cF$, which is the stationary measure
of \eqref{eq:mckean_vlasov:model} for $W\equiv 0$ given by $\muinfty(x)\propto e^{-V(x)}$. Indeed, lower semicontinuity and $\lambda$-convexity of $\cF$ follows from~\cite{Ambrosio-Gigli-Savare}, see Examples~9.3.4 and~9.3.6 together with Remark~9.3.8 therein. Therefore, the result 
follows from Lemma~\ref{lem:minimizer}, see also e.g. \cite{BGG}. We need $\muinfty$ in Lemma \ref{lemma:dmm3}, and it also appears in Theorem 4.1 in \cite{bernton} which we use to prove the above theorem. To get a bound on $ W_2^2 (\vrk, \mu(t))$ which is uniform in time, we basically separate between to cases: $t$ close to zero, and $t$ close to infinity. For $t$ close to infinity, we use that the solution $\mu(t)$ is close to $\muinfty$ in the sense $W_2^2(\mu(t),\muinfty)\leq e^{-2\lambda_Vt}W_2^2(\mu_0,\muinfty)$ due to the $\lambda_V$-contraction for gradient flow solutions of  \eqref{eq:model} (cf. \cite[Theorem 11.2.1]{Ambrosio-Gigli-Savare}).
\item The proximal operator is typically infeasible in practice. However, one can approximate the proximal operator and obtain the same convergence rates, see Theorem~\ref{theo:linear-problem-stability}.
\item {Our bounds are sharp in the sense that we get the same dependence on $\tau$ as in  \cite[Theorem~4.0.10]{Ambrosio-Gigli-Savare} when the initial measure does not satisfy any additional assumptions.}
\end{enumerate}

\normalcolor

\medskip
\noindent{\bf Main result for the problem with nontrivial nonlocal interactions (when $W\not\equiv0$).} We have the following estimate on the convergence rates of particle approximation of forward-backward splitting algorithm for granular medium equation in the general case~\eqref{eq:model}.  At this point we also remind the reader that we denote $\vr_k^N=\vr_k^{i,N}$ for any $i$ where $\boldsymbol{X}_k\sim \boldsymbol{\vr}_k=(\vr_k^{1,N},\ldots,\vr_k^{N,N})$ is an output from Algorithm~\ref{alg:particle}.

\begin{theo} \label{thm:alg_bound-nonlocal}
    Let $\muinfty$ be the unique minimizer of $\cF$ given by~\eqref{cF} for $V,W$ that satisfy \ref{V} and \ref{W} with $\lambda_V > 0$, $q_V,q_W\geq 1$. Let $\wt q_1=\max\{q_V,q_W\}$ and $\wt q_2=\max\{q_V,2q_W-1\}$. Assume further that $\mu$ is a gradient flow solution to~\eqref{eq:model} with initial datum $\mu_0\in\cP_2(\R^d)\cap \cP_{{\wt q_2}}(\R^d)$ and $\boldsymbol{\vr}_k$ is an output of Algorithm~\ref{alg:particle}  for every $k=1,\ldots,n$ and $N\geq 1$.  Then there exist $c_1,c_2,c_3,\tau_0>0$ depending only on $V,W,\mu_0, \overline{\mu}_\infty$, such that for all $\tau\in( 0,\tau_0)$, all $k=2,\dots\,$, and $t\in(\tau(k-1), \tau k]$ the following inequality holds true
\begin{flalign*}
W_2^2 (\vrkN, \mu(t)) \leq \min &\left\{c_1(1+t)(dN)^{\frac{{\wt q_1+1}}{2}}\tau, c_2 \left(\mathrm{e}^{-t\frac{\lambda_V}{2}}+  (dN)^{\frac{{\wt q_1+1}}{2}}\tau \right)\right\}+c_3 {d^{2(q_W-1)}} N^{-1} \, .
\end{flalign*}
\end{theo}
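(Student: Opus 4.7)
The plan is to insert the marginal $\mu_t^N$ of the linear $N$-particle SDE system~\eqref{eq:particle_system} between the algorithm output and the true nonlinear solution, splitting the error into a discretization part and a mean-field (propagation of chaos) part. By the triangle inequality for $W_2$ together with $(a+b)^2\le 2a^2+2b^2$,
\begin{equation*}
W_2^2(\vrkN,\mu(t))\le 2\,W_2^2(\vrkN,\mu_t^N)+2\,W_2^2(\mu_t^N,\mu(t)),
\end{equation*}
so the two contributions can be handled separately by tools already established in the paper.

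\textbf{Discretization term.} Lemmas~\ref{lem:Psi1} and~\ref{lem:Psi2} guarantee that $\boldsymbol{\Psi}$ from~\eqref{def-PsiN} satisfies assumption~\ref{V} on $\R^{dN}$ with the same convexity constant $\lambda_V$ and with growth Lipschitz exponent $\wt q_1=\max\{q_V,q_W\}$. Therefore Algorithm~\ref{alg:particle} is exactly Algorithm~\ref{alg:dgf} applied to the linear SDE~\eqref{eq:particle_system} in $\R^{dN}$, driven by $\boldsymbol{\Psi}$ with initial law $\boldsymbol{\mu}_0=\mu_0^{\otimes N}$. Invoking Theorem~\ref{thm:alg_bound-intro} in that setting yields
\begin{equation*}
W_2^2(\boldsymbol{\vr}_k,\boldsymbol{\mu}_t)\le\min\Big\{C_1(1+t)(dN)^{\frac{\wt q_1+1}{2}}\tau,\ C_2\big(\mathrm{e}^{-\lambda_V t/2}+(dN)^{\frac{\wt q_1+1}{2}}\tau\big)\Big\}.
\end{equation*}
Exchangeability of $\boldsymbol{\mu}_t$ and $\boldsymbol{\vr}_k$ allows one to descend to single marginals: from any coupling on $\R^{dN}\times\R^{dN}$ the cost decomposes coordinatewise and the exchangeable structure gives
\begin{equation*}
W_2^2(\vrkN,\mu_t^N)\le\frac{1}{N}\,W_2^2(\boldsymbol{\vr}_k,\boldsymbol{\mu}_t).
\end{equation*}

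\textbf{Tracking constants (the main obstacle).} The delicate issue is to verify that $C_1,C_2$ inherited from Theorem~\ref{thm:alg_bound-intro} scale \emph{linearly} in $N$ once evaluated at the tensorized data and the lifted minimizer, so that the factor $1/N$ above cancels them cleanly and leaves the announced rate proportional to $(dN)^{(\wt q_1+1)/2}\tau$, with $c_1,c_2$ independent of $N$. This reduces to checking that each ingredient of the explicit formulas for $c_1,c_2$ recalled after Theorem~\ref{thm:alg_bound-intro}, namely $\cF[\boldsymbol{\mu}_0]-\cF[\boldsymbol{\overline{\mu}}_\infty]$, $W_2^2(\boldsymbol{\mu}_0,\boldsymbol{\overline{\mu}}_\infty)$ and the moment $\boldsymbol{\mu}_0(|\cdot|^{\wt q_1-1})$, grows proportionally to $N$ under the product structure $\boldsymbol{\mu}_0=\mu_0^{\otimes N}$ and the hypothesis $\mu_0\in\cP_2\cap\cP_{\wt q_2}$; the moment bound $\mu_0(|\cdot|^{\wt q_2})<\infty$ furnishes the room needed to absorb the $2q_W-1$ growth coming from $\nabla\boldsymbol{\Psi}$ via the inner interaction contributions.

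\textbf{Mean-field term.} For the second piece, apply the propagation of chaos Theorem~\ref{theo:chaos}: synchronous coupling of the particle SDE~\eqref{eq:particle_system} with $N$ independent copies of the McKean--Vlasov dynamics~\eqref{eq:mckean_vlasov:model}, combined with Grönwall's inequality and uniform-in-time $\wt q_2$-moment bounds on $\mu_t$ inherited from $\mu_0\in\cP_{\wt q_2}$ under~\ref{V},~\ref{W}, produces
\begin{equation*}
W_2^2(\mu_t^N,\mu(t))\le c_3\,d^{2(q_W-1)}N^{-1},
\end{equation*}
where the factor $d^{2(q_W-1)}$ originates from applying the $q_W$-power growth Lipschitz estimate on $\nabla W$ to differences of particles whose higher moments are controlled by $\wt q_2$. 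Putting the discretization estimate, the marginalization bound, and the mean-field bound together yields the claim of Theorem~\ref{thm:alg_bound-nonlocal}.
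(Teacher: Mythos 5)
Your proof follows the same overall route as the paper: you split by the triangle inequality as $W_2^2(\vrkN,\mu(t))\le 2\,W_2^2(\vrkN,\mu_t^N)+2\,W_2^2(\mu_t^N,\mu(t))$, dispatch the mean-field term via Theorem~\ref{theo:chaos}, verify through Lemmas~\ref{lem:Psi1} and~\ref{lem:Psi2} that $\boldsymbol{\Psi}$ satisfies~\ref{V} on $\R^{dN}$ with the same $\lambda_V$ and exponent $\wt q_1$, and apply Theorem~\ref{thm:alg_bound-intro} in dimension $dN$. Where you diverge, genuinely, is in the descent from the product-space estimate to the single marginal. The paper simply uses $W_2^2(\vr_k^N,\mu^N(t))\le W_2^2(\boldsymbol{\vr}_k,\boldsymbol{\mu}(t))$; you instead derive the sharper exchangeability bound $W_2^2(\vrkN,\mu_t^N)\le\tfrac{1}{N}W_2^2(\boldsymbol{\vr}_k,\boldsymbol{\mu}_t)$ by decomposing the quadratic cost coordinatewise under an optimal coupling and averaging over the $N$ identical marginal costs. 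This refinement is not merely cosmetic: the constants $c_1,c_2$ coming out of Theorem~\ref{thm:alg_bound-intro} applied to the tensorized data involve the extensive quantities $\cF[\boldsymbol{\mu}_0]-\cF[\boldsymbol{\overline{\mu}}_\infty]$ and $W_2^2(\boldsymbol{\mu}_0,\boldsymbol{\overline{\mu}}_\infty)$, which grow linearly in $N$, so the $1/N$ factor is what makes the final constants $N$-independent as the theorem asserts. You correctly identify this constant-tracking as the crux of the argument, which the paper's very terse proof passes over silently. One small caveat: you assert that the lifted moment $\boldsymbol{\mu}_0(|\cdot|^{\wt q_1-1})$ also grows like $N$; in fact it grows like $N^{(\wt q_1-1)/2}$, but this is harmless since in the derivation of Lemma~\ref{lemma:dmm3} this quantity is already absorbed into the explicit $(dN)^{(\wt q_1+1)/2}$ factor via Corollary~\ref{coro:moments}, whose constant $C_a$ is itself $N$-independent under tensorization (because the ratio $\Ex|\boldsymbol{X}_0|^2/(dN)$ reduces to $\mu_0(|\cdot|^2)/d$). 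So the precise bookkeeping differs slightly from what you wrote, but the conclusion that the $1/N$ factor neutralizes the extensive contributions is sound. The mean-field part of your sketch matches Theorem~\ref{theo:chaos} as stated.
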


Let us comment on the above result.
\begin{enumerate}[{\it (i)}]
\item By taking $\tau\leq \min\{N^{-1-\frac{{\wt q_1+1}}{2}},\tau_0\}$, we obtain that $W_2^2 (\vrkN, \mu(t)) \leq \mathcal{O}(N^{-1})$.
\item 
In order to get precision $\epsilon$ in $W_2$, it requires to perform  $\mathcal{O}(\epsilon^{-3-\wt q_1})$ steps  of Algorithm~\ref{alg:particle} with  $N=\mathcal{O}(\epsilon^{-2})$ particles. Hence, the total computational cost is of order $\mathcal{O}(\epsilon^{-5-\wt q_1})$.
\item In the case $k=1$, we have the estimate 
\[
W_2^2 (\vr_{1}^N, \mu(t)) \leq c_1(dN)^{\frac{\widetilde{q}_1+1}{2}}(1+\tau)\tau\qquad\text{for }\ 0<t\leq \tau\,.
\]
\item The constants $c_1,c_2$ are twice larger than the constants from Theorem \ref{thm:alg_bound-intro} (with $\tilde{q}_1$ replacing $q_V$), while $c_3$ is twice larger than the constant from Theorem \ref{theo:chaos}.
\item Let us again remark that there exists a unique minimizer $\muinfty$ of $\cF$ by Lemma~\ref{lem:minimizer}, even in the current case when $W\not\equiv0$, now given by the implicit formula $\muinfty(x)\propto e^{-V(x)-(W\ast\muinfty)(x)}$. Since we assume that $\mu(t)$ is a gradient flow solution, we still have $W_2^2(\mu(t),\muinfty)\leq e^{-2\lambda_Vt}W_2^2(\mu_0,\muinfty)$.
\item At this point it is interesting to note that our techniques do not rely on the presence of $\Delta \mu$. In fact, we can obtain exactly the same result for the equation $\partial_t\mu=\dv(\mu\nabla V)+\dv(\mu(\nabla W \ast \mu))$.
\end{enumerate}

Let us comment on some recent results within the pure SDE setting \cite{ChDR24,ChDRStWi25}. The setup of \cite{ChDR24} embraces McKean--Vlasov SDEs related to \eqref{eq:model} under assumptions as ours, but they also incorporate some terms we do not consider. On the other hand, \cite[Theorem 2.19]{ChDR24} provides rates of convergence for the split-step method involving bounds that are not explicit and not uniform in time. Moreover, both contributions prove propagation of chaos: Theorem~\ref{theo:chaos} yields bound of order $N^{-1/2}$, while \cite[Proposition~2.5]{ChDR24} provides the same order only when $d<4$ (for $d\geq4$ it gets worse in $N$). Under stronger (global Lipschitz) assumptions, better rates for a non-Markovian Euler-type scheme is provided in a related one-dimensional case in \cite{ChDRStWi25}.

Finally, we mention Theorem~\ref{thm:cont-moments} providing a result on controlling the moments of solutions to McKean--Vlasov SDE as a valuable tool for analyzing the behavior, stability, and convergence properties of stochastic systems, cf. \cite{MALRIEU_log_sobolev,CGM}. This result is of a separate interest not only for further mathematical analysis of the equation, but also it might be used in validating models in various applications.\medskip

\noindent {\bf Organization. } We start our paper with presenting notation, preliminary information on the well-posedness on the studied problems, and key properties of the setting in Section~\ref{sec:prelim}. Section~\ref{sec:nonlocal-problem} is devoted to the moment control and the propagation of chaos (Theorems~\ref{thm:cont-moments} and \ref{theo:chaos}).   In Section~\ref{sec:local-problem} we deal with a local problem, when $W\equiv 0$. First we establish Theorem~\ref{theo:moments} and our key technical tool of Lemma~\ref{lemma:dmm3}.  Then we present proofs of our main results, namely Theorems~\ref{thm:alg_bound-intro} and~\ref{thm:alg_bound-nonlocal}. Section~\ref{sec:5} focuses on approximation of the proximal operator within our approach (Algorithm~\ref{alg:dgf-pert}) and its convergence rates (Theorem~\ref{theo:linear-problem-stability}). We also explain how the perturbed Algorithm~\ref{alg:dgf-pert} can be realized via the standard gradient descent algorithm. Discussion on possible extensions of the presented methods is exposed in Section~\ref{sec:6}. We illustrate our results by numerical examples in Section~\ref{sec:7}. Some auxiliary facts are proven in Appendix.

\section{Preliminaries}\label{sec:prelim}
\subsection{Notation and basics}
 To abbreviate expressions in the paper, we introduce the notation 
\begin{equation*}
f(x) \leq \mathcal{O}(x) \qquad \mbox{iff} \qquad \exists C \in (0, + \infty); \ |f(x)| \leq C|x| \quad  \forall x \in \R,
\end{equation*}
and also 
\begin{equation*}
f(x) = o(x) \qquad \mbox{iff} \qquad f(x) \to 0 \quad \mbox{as} \quad x \to 0 \, .
\end{equation*}
We denote by $g_{a}$, $a>0$, the Gaussian noise (measure) such that $g_{a} \sim \cN(0, 2a \, \mathsf{Id})$. It is explicitly given as $g_a(x)=\frac{\textup{exp}(-|x|^2/(4a))}{(4\pi a)^{d/2}}$. We will repeatedly use the following fundamental inequalities:
$$
|x+y|^{a}\leq 
\begin{cases}
|x|^a+|y|^a\,,\qquad &\textup{if $0<a\leq 1\,$,}\\
2^{a-1}|x|^a+2^{a-1}|y|^a\,,\qquad &\textup{if $a>1\,$.}
\end{cases}
$$

By $\cP(\Rd)$ we denote the space of the probability measures, by $\cP_a (\Rd)$ -- the space of probability measures with finite $a$th moments such that $\cP_a (\Rd) = \{ \mu \in \cP ; \mu(|\cdot|^a) < +\infty \}$, while by $\cP_{\rm ac}$ absolutely continuous ones (having a density with respect to the Lebesgue measure). At each measure $\mu \in \cP_2 (\Rd)$ we consider a tangent space (bundle) defined as 
\begin{flalign*}
    {\rm Tan}_{\mu} (\cP_2 (\Rd)) &:= \overline{ \{\nabla \phi : \phi \in C_c^{\infty} (\Rd) \} }^{L^2 (\mu)} \\ &= \left\{ v \in L^2 (\mu);\ \int_{\R^d} v \cdot w \, \d \mu = 0 , \ \forall w \in L^2 (\mu) \ \mbox{such that} \ \dv (w\mu) = 0 \right\} \, . 
\end{flalign*}
Thus, we consider the space $\cP_2 (\Rd)$ as a~weighted space where we assign to each element $\mu$ a Hilbert space $L^2 (\mu)$. Elements of the tangent bundle ($L^2 (\mu)$ space) are vector fields. They are gradients of a~compactly supported smooth functions on $\Rd$, i.e., $C_c^\infty (\Rd)$.

We consider the metric space $(\cP_2 (\Rd), W_2)$ endowed with the $2$-Wasserstein distance. The $2$-Wasser\-stein distance between two probability measures $\mu^1,\mu^2\in \cP_2(\R^d)$ is deﬁned by
\begin{equation} \label{eq:w2_dist}
W_2 (\mu^1,\mu^2) := \left(\inf\left\{
\int_{\R^d\times\R^d}|x_1 - x_2 |^2 \,\d \boldsymbol \mu(x_1 , x_2 );\ \boldsymbol\mu\in\Gamma(\mu^1,\mu^2) \right\}\right)^\frac{1}{2}\,,
\end{equation}
or equivalently
$$
W_2 (\mu^1, \mu^2) :=\left(\inf\left\{\Ex \lvert X - Y \rvert^2;\ X \sim \mu^1 , Y \sim \mu^2 \right\}\right)^{\frac{1}{2}}\,.
$$
We denote $\Gamma(\mu^1, \mu^2, \ldots , \mu^N)$ as a set of \textit{admissible plans} such that
\begin{equation*}
\Gamma(\mu^1, \ldots , \mu^N) := \{ \boldsymbol\mu \in \cP (X_1 \times \cdots \times X_N); \ \Pi^i_\# \boldsymbol\mu = \mu^i; \ i = 1, \ldots , N \} \, .
\end{equation*}
Here, $\Pi^i(X_1\times\cdots\times X_N)=X_i$ and $\Pi^i_\# \boldsymbol\mu$ denotes the pushforward of the vector $\boldsymbol\mu$, i.e., for any measurable $f:\Rd\to\R^p$, for $p\geq 1$, and any measure $\nu\in\cP(\Rd)$ we denote \begin{equation}\label{eq:pushforward}
f_\#\nu (U):=\nu((f)^{-1}(U))\qquad\textup{for any measurable set $U\subseteq\R^p$}\,.
\end{equation} 
Especially for $N=2$ we call measure $\boldsymbol\mu \in \Gamma(\mu^1, \mu^2)$ a \textit{transport plan.} We call the  transport plan $\boldsymbol\mu \in \Gamma_o (\mu^1, \mu^2)$ \textit{optimal} iff it attains the~infimum of \eqref{eq:w2_dist}.
In general, $X$ can be a~Hilbert space. In the paper, we consider $X=\Rd$. During the text we will consider extended real functionals from the space of probability measures $\cG: \cP(\Rd) \to (- \infty, +\infty ]$ with its proper domain such that 
\begin{equation*}
\operatorname{dom} \cG := \{ \nu \in \cP(\Rd) ; \ \cG [\nu] < + \infty \} \neq \emptyset \, .
\end{equation*}

By the {\it proximal operator} of the function $V$ with the time step $\tau$ we mean an operator $\prox_V^\tau : \R^d \rightarrow \R^d$ given by 
\begin{equation}\label{def:proximal}
    \proxv (x) := \argmin_{y\in\R^d} \left\{ V(y) + \tfrac{1}{2\tau} \lvert y-x \rvert^2\right\} \, .
\end{equation} 
In discrete time $k=1,\ldots,n$ with step $\tau$, we say that $\wt{\vr}_{k}$ is a~solution of the {\it Jordan--Kinderlehrer--Otto scheme} ({\it JKO scheme} for short) for a functional $\cG: \cP(\Rd) \to (- \infty, +\infty ]$  if
\begin{equation*}
\begin{cases}
\wt{\vr}_{k}\in \underset{\nu\in\cP_{\rm ac}}{\rm arg\,min} \left\{ \cG[\nu] + \tfrac{1}{2\tau} W_2^2 (\nu, \wt{\vr}_{k-1} )\right\}\,,\\
\wt\vr_0=\vr_0(x)\,.
\end{cases}
\end{equation*} 

Pretty complete exposition of the theory of the JKO scheme can be found in \cite[Chapter~4]{Ambrosio-Gigli-Savare}. For general data it is known that $W_2^2(\wt\vr_k,\mu(t))\leq C\tau\,$, which can be improved to $W_2^2(\wt\vr_k,\mu(t))\leq C\tau^2\,$ under additional assumptions, see \cite[Theorem~4.0.10]{Ambrosio-Gigli-Savare}. 

\medskip

\noindent{\bf Gradient flows. } Let us comment on the basic properties of~\eqref{eq:model} and introduce the main tools needed for its analysis. Our main reference for this part is \cite{Ambrosio-Gigli-Savare}, but one can get also insight from \cite{Santambrogio-overview,Figalli-Glaudo}. 

A {\it generalized geodesic} joining $\mu^2$ to $\mu^3$ (with base $\mu^1$) is a~curve of the type
\[
\mu_\theta^{2\rightarrow 3} = ((1-\theta)\mu^2 + \theta \mu^3)_\# \boldsymbol{\mu}; \quad \theta \in [0,1],
\]
where 
\[
\boldsymbol{\mu} \in \Gamma (\mu^1, \mu^2, \mu^3)\,, \quad  \Pi^{1,2}_\# \boldsymbol{\mu} \in \Gamma_o (\mu^1,\mu^2)\,,\quad \text{and} \quad \Pi^{1,3}_\# \boldsymbol{\mu} \in \Gamma_o (\mu^1, \mu^3) \, .
\]
Given $\lambda\in\R$, a~functional $\cG:\mathscr{P}_2(\R^d) \rightarrow  \R \cup \{+\infty\}$ is called $\lambda$-{\it convex along generalized geodesics} if for any $\mu^1, \mu^2, \mu^3 \in \operatorname{dom} \cG$ there exists an generalized geodesics $\mu_\theta^{2\rightarrow 3}$  induced by a~plan $\boldsymbol{\mu} \in \Gamma_o  (\mu^1, \mu^2, \mu^3)$ such that
\[
\cG[\mu_\theta^{2\rightarrow 3}]  \leq \theta \cG [\mu^2]+ (1-\theta)\cG[\mu^3] - \tfrac{\lambda}{2} \theta (1-\theta) W_{2,\boldsymbol{\mu}}^2 (\mu^2, \mu^3)\quad \forall \theta \in [0,1] \, ,
\]
where $W_{2,\boldsymbol{\mu}}^2 (\mu^2, \mu^3)$ is defined by 
\begin{equation*}
   W_{2,\boldsymbol{\mu}}^2 (\mu^2, \mu^3) := \int |x_3-x_2|^2 \, \mathrm{d}\boldsymbol{\mu}(x_1,x_2,x_3) \geq W_2^2 (\mu^2, \mu^3) \, . 
\end{equation*} 
A {\it gradient flow} of a functional $\cG$ in the Wasserstein space $(\mathscr{P}_2 (\R^d), W_2)$ is defined as 
\begin{equation} \label{eq:gf}
 - v_t \in  \partial \cG [\mu_t]\, ; \ v_t \in \mathrm{Tan}_{\mu_t} (\mathscr{P} (\R^d)) 
\end{equation}
according to \cite[Definition 11.1.1]{Ambrosio-Gigli-Savare}. It can be characterised by the Evolution Variational Inequalities (EVI for short).

\begin{prop} [EVI, {\cite[Theorem~11.1.4]{Ambrosio-Gigli-Savare}}] 
Let the functional $\cG : \cP_2 (\R^d) \to \R$ be proper, lower semicontinuous, coercive, and $\lambda$-convex along generalised geodesics with $\lambda \geq 0$. Then the gradient flow \eqref{eq:gf} is characterised by the Evolution Variational Inequality such that 
\begin{equation*} 
\tfrac{1}{2} \tfrac{\mathrm{d}}{\mathrm{d}t} W_2^2 (\mu_t, \nu) \leq \cG [\nu] - \cG [\mu_t] - \tfrac{\lambda}{2} W_2^2 (\mu_t, \nu) \quad \forall \nu \in \mathrm{dom}\, \cG \, .
\end{equation*}
\end{prop}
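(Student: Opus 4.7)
The plan is to follow the JKO / minimizing movements approach from \cite{Ambrosio-Gigli-Savare}. Starting from $\mu_0$, for each step $\tau>0$ I would define iteratively $\mu^\tau_{k+1}$ as a minimizer of $\nu \mapsto \cG[\nu]+\frac{1}{2\tau}W_2^2(\nu,\mu^\tau_k)$; existence follows from properness, lower semicontinuity, coercivity, and the strict convexity of this functional along generalized geodesics with base $\mu^\tau_k$ (since $\nu\mapsto W_2^2(\nu,\mu^\tau_k)$ is $2$-convex along such geodesics and $\lambda\geq 0$).

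The first substantial step is a one-step discrete EVI. Fix $\nu\in\mathrm{dom}\,\cG$, pick a three-plan $\boldsymbol\mu \in \Gamma_o(\mu^\tau_k,\mu^\tau_{k+1},\nu)$, and form the generalized geodesic $\mu_\theta:=((1-\theta)\Pi^2+\theta\Pi^3)_\#\boldsymbol\mu$ from $\mu^\tau_{k+1}$ to $\nu$ with base $\mu^\tau_k$. Combining the test inequality $\cG[\mu^\tau_{k+1}]+\frac{1}{2\tau}W_2^2(\mu^\tau_{k+1},\mu^\tau_k)\leq \cG[\mu_\theta]+\frac{1}{2\tau}W_{2,\boldsymbol\mu}^2(\mu_\theta,\mu^\tau_k)$, the $\lambda$-convexity of $\cG$ along $\mu_\theta$, and the Pythagorean-type identity
\[
W_{2,\boldsymbol\mu}^2(\mu_\theta,\mu^\tau_k)=(1-\theta)W_2^2(\mu^\tau_{k+1},\mu^\tau_k)+\theta W_2^2(\nu,\mu^\tau_k)-\theta(1-\theta)W_{2,\boldsymbol\mu}^2(\mu^\tau_{k+1},\nu),
\]
then dividing by $\theta$ and sending $\theta\to 0^+$, one obtains, after using $W_{2,\boldsymbol\mu}^2\geq W_2^2$ together with $\lambda\geq 0$,
\[
\frac{W_2^2(\mu^\tau_{k+1},\nu)-W_2^2(\mu^\tau_k,\nu)}{2\tau}+\frac{\lambda}{2}W_2^2(\mu^\tau_{k+1},\nu)\leq \cG[\nu]-\cG[\mu^\tau_{k+1}].
\]

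Next I would pass to the continuum limit. The telescoping estimate $\sum_k W_2^2(\mu^\tau_{k+1},\mu^\tau_k)\leq 2\tau(\cG[\mu_0]-\inf\cG)$ gives equi-H\"older continuity of the piecewise-constant interpolants, and coercivity of $\cG$ provides $\cP_2$-compactness. Extracting a subsequential limit curve $\mu_t$, lower semicontinuity of $\cG$ and continuity of $W_2^2(\cdot,\nu)$ transfer the integrated discrete EVI to the continuous one; differentiating recovers the pointwise EVI. The equivalence with the subdifferential characterization~\eqref{eq:gf} is then established by showing that $\mu_t$ is locally absolutely continuous with tangent velocity $v_t\in \mathrm{Tan}_{\mu_t}\cP_2(\R^d)$ and by testing the EVI with $\nu$ close to $\mu_t$ to identify $-v_t$ with the minimal selection of $\partial\cG[\mu_t]$. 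Uniqueness of the EVI curve, and therefore of the gradient flow, follows from the EVI itself applied to two candidates $\mu_t,\tilde\mu_t$, which yields $\frac{\mathrm{d}}{\mathrm{d}t}W_2^2(\mu_t,\tilde\mu_t)\leq -\lambda W_2^2(\mu_t,\tilde\mu_t)$.

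The main obstacle lies in the careful handling of generalized geodesics rather than classical $W_2$-geodesics: classical geodesics are inadequate here because $\nu\mapsto W_2^2(\nu,\mu^\tau_k)$ is not convex along them in general, whereas $2$-convexity holds along generalized geodesics with base $\mu^\tau_k$; this is precisely why the hypothesis is formulated along generalized geodesics. A secondary technical difficulty is the gap $W_{2,\boldsymbol\mu}^2(\mu^\tau_{k+1},\nu)\geq W_2^2(\mu^\tau_{k+1},\nu)$, which requires the sign restriction $\lambda\geq 0$ in order to preserve the direction of the inequality when replacing the generalized-geodesic distance by the Wasserstein distance in the final pointwise EVI.
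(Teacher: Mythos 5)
The paper does not prove this proposition; it is quoted verbatim with the citation \cite[Theorem~11.1.4]{Ambrosio-Gigli-Savare}, so there is no ``paper's own proof'' to compare against. Your sketch faithfully reconstructs the textbook argument behind that reference. The core computation is right: the one-step discrete EVI is obtained by testing the JKO minimality of $\mu^\tau_{k+1}$ against the generalized geodesic from $\mu^\tau_{k+1}$ to $\nu$ with base $\mu^\tau_k$, invoking the Pythagoras identity $W_{2,\boldsymbol\mu}^2(\mu_\theta,\mu^\tau_k)=(1-\theta)W_2^2(\mu^\tau_{k+1},\mu^\tau_k)+\theta W_2^2(\nu,\mu^\tau_k)-\theta(1-\theta)W_{2,\boldsymbol\mu}^2(\mu^\tau_{k+1},\nu)$, dividing by $\theta$, and letting $\theta\to 0^+$; you then correctly observe that the term $-\tfrac{1}{2\tau}W_{2,\boldsymbol\mu}^2(\mu^\tau_{k+1},\nu)$ can be replaced by $-\tfrac{1}{2\tau}W_2^2(\mu^\tau_{k+1},\nu)$ unconditionally, whereas the analogous replacement in the $\lambda$-term uses the sign hypothesis $\lambda\ge 0$ (this is exactly why the paper states the EVI under that restriction, even though AGS treats general $\lambda\in\R$). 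Your explanation of why generalized geodesics, rather than ordinary $W_2$-geodesics, are indispensable (convexity of $\nu\mapsto W_2^2(\nu,\mu^\tau_k)$) is the right structural point and is often glossed over. The remaining steps (telescoping energy estimate, equi-H\"older interpolants, compactness from coercivity, passage to the continuum EVI, identification of the limit velocity with the minimal selection of $\partial\cG$, and EVI-based uniqueness via $\tfrac{\d}{\dt}W_2^2(\mu_t,\tilde\mu_t)\le -\lambda W_2^2(\mu_t,\tilde\mu_t)$) are stated at sketch level but in the correct order and with the correct ingredients; fleshing out the identification step would require the machinery of \cite[Theorem~11.1.3]{Ambrosio-Gigli-Savare}, which you correctly gesture at rather than reprove. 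In short: correct sketch, same route as the cited source.
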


\subsection{On the existence and uniqueness of solutions} \label{ssec:ex-n-uni}
We shall identify solutions to the nonlinear Fokker--Planck equations~\eqref{eq:model} and densities of solutions to McKean--Vlasov equation~\eqref{eq:mckean_vlasov:model}. 

We call a solution to the SDE {\it strong} if the integrated equation holds almost surely. Moreover, if we have a strong solution $Y_t$ to~\eqref{eq:mckean_vlasov:model} with density $\mu_t$, which is absolutely continuous with respect to the Lebesgue measure given an initial moment condition due to \cite[Page 29]{CGM}, then, by the Itô formula, we can show that $\mu_t$ satisfies~\eqref{eq:model} in the sense of distributions. Indeed, let us take $\phi\in \mathscr{D}(\R^d)$, $Y_t$ satisfying~\eqref{eq:mckean_vlasov:model}, and apply  Itô formula to $\phi(Y_t)$ to get
\begin{flalign*}
    \phi(Y_t)=\phi(Y_0)-\int_{0}^t\bigg( \nabla \phi(Y_s)\cdot\Big(\nabla V(Y_s)+\big(\nabla W*\mu_s\big)(Y_s)\Big)-\Delta\phi(Y_s)\bigg)\ds+\int_0^t\sqrt{2}\nabla\phi(Y_s)\cdot\d B_s\,.
\end{flalign*}
Taking the expectation on both sides above, we get \begin{flalign*}
    \int_{\R^d}\phi(y)\d\mu_t(y)=\int_{\R^d}\phi(y)\d\mu_0(y)-\int_{0}^t \int_{\R^d}\bigg(\nabla \phi(y)\cdot\Big(\nabla V(y)+\big(\nabla W*\mu_s\big)(Y_s)\Big)-\Delta\phi(y)\bigg)\d\mu_s(y)\ds \,
\end{flalign*}
and $\mu(\cdot,t)=\mu_t(\cdot)$ is a distributional solution to~\eqref{eq:model}.  

\medskip
 
\noindent Let us comment on the known results on the existence and uniqueness to both abovementioned problems.

\medskip

\noindent{\bf SDE~\eqref{eq:mckean_vlasov:model}. } We refer to~\cite{oksendal2010stochastic,EK} for the basics of the theory of SDEs. 
The existence theory for the McKean--Vlasov equation with nonlocal interactions is an active area of research in mathematical analysis and probability theory. The interaction potential $W$ is often assumed to be locally Lipschitz continuous, bounded, or to satisfy suitable decay or growth conditions. The initial data $\mu_0$ is typically assumed to be nonnegative and integrable to ensure that the solution remains well-defined and nonnegative for all time.  Assumptions on the existence of moments of the initial distribution function $\mu_0$ may be required to ensure the well-posedness of the equation, and as we have seen above it is also required to relate the (absolutely continuous) density coming from the SDE with the solution of the corresponding PDE. In our approach the well-posedness of the SDE is thus essential, but it is not our main focus and we rely on previous theory. 

The study of existence of~\eqref{eq:mckean_vlasov:model} dates back to~\cite{McKean}. See also~\cite{Sznitman,Meleard} for a study on the no interaction-model, i.e., when $W\equiv 0$. The mentioned contributions deal with the existence to problems with Lipschitz coefficients. We refer to~\cite[Theorem~2.6]{CGM} for existence and uniqueness to~\eqref{eq:mckean_vlasov:model} under $m$-polynomial growth conditions from above imposed on $\nabla W$ and $\nabla V$. The assumptions therein are satisfied under our \ref{V} and \ref{W} for $m=\max\{q_V,q_W\}\geq 1$. Note, however, that the integrability of data required in~\cite[Theorem~2.6]{CGM} depends on $m$, i.e., $\mu_0\in\cP_2(\R^d)\cap \cP_{\max\{m(m + 3), 2m^2\}}(\R^d)$. We point out that \cite[Theorem 2.13]{HerrmannImkellerPeithmann2008} provides the existence and uniqueness of the SDE including moment bounds, under a proper regime including locally Lipschitz behaviour of $V,W$  and polynomial growth of $W$ itself.  In \cite[Proposition 1]{surv-I}, it is proven that the PDE and SDE are equivalent in the globally Lipschitz case.
 
\medskip

\noindent{\bf PDE~\eqref{eq:model}. } Our main reference for a PDE approach is \cite{CMCV}. Problem~\eqref{eq:model} might be effectively studied with the use of the gradient flow theory, see \cite{Ambrosio-Gigli-Savare,Figalli-Glaudo}. The relation between \eqref{eq:model} and minimization of $\cF$ given by~\eqref{cF} is already exposed in the introduction. 

The typical assumptions on the  interaction potential $W$ such as smoothness, convexity, and decay properties, help ensure the well-posedness of the equation and the existence of solutions. Additional specific assumptions on the interaction potentials may be necessary to capture physical phenomena such as repulsion or attraction between particles. The    regularity of the initial datum is imposed in order to preserve physical constraints such as mass conservation.  The friction term (involving $V$) is often assumed to be convex or to satisfy certain growth conditions. It is also done to reflect physically realistic behavior. For example, convexity implies that the frictional force increases with the relative velocity between particles, which aligns with our intuition about how friction behaves in real granular materials. On the other hand, these assumptions lead to the existence, uniqueness, and stability of solutions under perturbations.

We note that by \cite[Chapter~11]{Ambrosio-Gigli-Savare}, we can ensure existence and uniqueness of solutions to~\eqref{eq:model}. In fact, what is needed there is:
\begin{itemize}
    \item $V$ is lower semi-continuous, $\lambda_V$-convex, and having proper domain with nonempty interior $\Omega\subset\R^d$
    \item $W$ is convex, differentiable, even, and doubling in the sense that there exists $c_W$ such that for every $x,y$ it holds $W(x+y)\leq c_W(1+W(x)+W(y))$.
\end{itemize}
We note that under these assumptions \cite[Theorem~11.2.8]{Ambrosio-Gigli-Savare} yields existence of a unique distributional solution $\mu_t\in L^1_{\textup{loc}}((0,+\infty);W^{1,1}_{\textup{loc}}(\R^d))$. Moreover, the obtained unique distributional solution $\mu_t\in\cP_2(\r^d)$ is a gradient flow of~\eqref{cF}, which preserves mass along the flow and is continuous (in $\cP_2(\r^d)$) as $t\to 0$. Let us stress that there is no need to impose any particular growth of the fashion of \cite[Section~2]{CMCV} or \cite[Section~6]{CMCV-ARMA}  for the existence itself. 

Despite the fact that the  assumptions on $W$, that we have just mentioned, do not completely overlap with \ref{W} under which we prove the main results, we decided to give a conditional result (assuming the existence of gradient flow solutions).  Finding exact references in our case is not the main focus of the paper. However, the existence of a distributional solution is guaranteed from the SDE approach explained above. Whether this solution is a gradient flow or not is an open problem in our full generality.

\section{Propagation of chaos to handle nonlocal interactions}\label{sec:nonlocal-problem}

As already mentioned, assuming existence of higher moments of the initial distribution function $\mu_0$ is typical to ensure the well-posedness of the equations \eqref{eq:model} and \eqref{eq:mckean_vlasov:model}. It is even more expected in the study of propagation of chaos. We will now complete and extend the proofs in \cite[Theorem 3.3]{MALRIEU_log_sobolev}, \cite[Section 3.1.3]{surv-II}, and \cite{CGM} to obtain improved estimates on moment bounds and propagation of chaos in our non-globally Lipschitz setting. See also the related Theorem 2.3 and Proposition 2.5 in \cite{ChDR24}.

\begin{theo}[Moments bound]\label{thm:cont-moments}
We assume that $V : \mathbb{R}^d \to \mathbb{R}$ attains its minimum at $x^*=0$ and is $\lambda_V$-convex for $\lambda_V>0$, and $W: \mathbb{R}^d \to \mathbb{R}$ is convex and radially symmetric. Let $\Yt$ be a solution to~\eqref{eq:mckean_vlasov:model} for $Y_0\sim\mu_0\in\cP_2(\R^d)\cap\cP_a(\R^d)$ for $a\geq 0$. Then 
\begin{equation*}
    \sup_{t \geq 0} \Ex \lvert Y_t \rvert^a \leq \max\left\{ \Ex\lvert Y_0\rvert^a,\left(\frac{d+(a-2)_+}{\lambda_V}\right)^{\tfrac{a}{2}}\right\}\,.
\end{equation*}
\end{theo}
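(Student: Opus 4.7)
The plan is to apply It\^o's formula to $|Y_t|^a$ (or to the smoothed surrogate $f_\epsilon(y) := (\epsilon + |y|^2)^{a/2}$ when $a<2$, with a passage to the limit $\epsilon \to 0^+$), take expectation to eliminate the martingale part, and exploit the convexity of $V$ and the convexity/symmetry of $W$ to close a differential inequality for $\phi(t) := \Ex|Y_t|^a$ that can be solved explicitly.

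Using $\nabla|y|^a = a|y|^{a-2}y$ and $\Delta|y|^a = a(d+a-2)|y|^{a-2}$, It\^o's formula (Lemma~\ref{lem:Ito}) will give, after taking expectation,
\[
\frac{d}{dt}\Ex|Y_t|^a = -a\,\Ex\bigl[|Y_t|^{a-2}Y_t \cdot \nabla V(Y_t)\bigr] - a\,\Ex\bigl[|Y_t|^{a-2}Y_t \cdot (\nabla W * \mu_t)(Y_t)\bigr] + a(d + a - 2)\,\Ex|Y_t|^{a-2}.
\]
The first term I would bound above by $-a\lambda_V\Ex|Y_t|^a$, using $\lambda_V$-convexity of $V$ and $\nabla V(0) = 0$ (minimum at $0$) to get $y \cdot \nabla V(y) \geq \lambda_V|y|^2$. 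For the nonlocal term I would introduce an independent copy $\tilde Y_t \sim \mu_t$ of $Y_t$, rewrite $(\nabla W * \mu_t)(Y_t) = \Ex_{\tilde Y_t}[\nabla W(Y_t - \tilde Y_t)]$, and symmetrise in $(Y_t, \tilde Y_t)$ via $\nabla W(-z) = -\nabla W(z)$ (from $W(-z) = W(z)$) to recast the $W$-contribution as $-\tfrac{a}{2}\Ex\bigl[(|Y_t|^{a-2}Y_t - |\tilde Y_t|^{a-2}\tilde Y_t) \cdot \nabla W(Y_t - \tilde Y_t)\bigr]$. This should have the right sign since $y \mapsto |y|^{a-2}y$ is the gradient of the convex function $|y|^a/a$ (hence monotone) and $\nabla W$ carries a compatible monotonicity from the convexity/symmetry of $W$; in particular, for radially symmetric $W$, $\nabla W(u)$ is a nonnegative scalar multiple of $u$ and the integrand reduces to a nonnegative scalar times $(|y|^{a-2}y - |z|^{a-2}z)\cdot(y-z) \geq 0$. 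Consequently the $W$-term contributes non-positively and can be discarded.

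To close the remaining inequality for $a \geq 2$, I would apply Jensen's inequality ($\Ex|Y_t|^{a-2} \leq (\Ex|Y_t|^a)^{(a-2)/a}$) to arrive at $\phi'(t) \leq -a\lambda_V\phi(t) + a(d+a-2)\phi(t)^{(a-2)/a}$. The substitution $\psi(t) := \phi(t)^{2/a}$ linearises this to $\psi'(t) \leq -2\lambda_V\psi(t) + 2(d+a-2)$, and a Gr\"onwall-type comparison with the associated linear ODE yields $\psi(t) \leq \max\{\psi(0), (d+a-2)/\lambda_V\}$, which is the claimed bound after raising to the power $a/2$. For $a \in [0, 2)$, running the same argument on $f_\epsilon$ produces the diffusion coefficient $d + (a-2)|y|^2/(\epsilon+|y|^2) \leq d$, so only $d$ appears and one recovers $\max\{\Ex|Y_0|^a, (d/\lambda_V)^{a/2}\}$, exactly accounting for the $(a-2)_+$ in the statement.

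The hard part will be rigorously justifying the non-positive sign of the symmetrised $W$-contribution, which amounts to controlling $(|y|^{a-2}y - |z|^{a-2}z) \cdot \nabla W(y-z)$ by combining the monotonicity of $y \mapsto |y|^{a-2}y$ with the convexity/symmetry of $W$. Secondary technical points are the smooth approximation passage for the low-regularity regime $a \in [0, 2)$ and the standard localising argument ensuring that the It\^o martingale has zero expectation, which go through once the assumption $\mu_0 \in \cP_a(\R^d)$ is propagated along the flow by the estimate itself.
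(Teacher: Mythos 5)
Your plan mirrors the paper's: It\^o for $|Y_t|^a$, take expectation, use $\lambda_V$-convexity with $\nabla V(0)=0$ for the restoring $-a\lambda_V m_a(t)$ term, symmetrise with an independent copy $\tilde Y_t$ for the nonlocal term, apply Jensen, and close a Gr\"onwall-type inequality. One genuine improvement on your side: the symmetrised expression $(|Y_t|^{a-2}Y_t-|\tilde Y_t|^{a-2}\tilde Y_t)\cdot\nabla W(Y_t-\tilde Y_t)$ is the \emph{correct} output of swapping $Y_t\leftrightarrow\tilde Y_t$ and using $\nabla W(-z)=-\nabla W(z)$. The paper instead asserts the weighted identity $\Ex[|Y_t|^{a-2}Y_t\cdot\nabla W(Y_t-\tilde Y_t)]=\tfrac{1}{2}\Ex[|Y_t|^{a-2}(Y_t-\tilde Y_t)\cdot\nabla W(Y_t-\tilde Y_t)]$ ``in turn'' from the unweighted $a=2$ case, which is not a valid deduction: the Fubini exchange must also carry the weight onto $\tilde Y_t$.

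The sign you flag as ``the hard part'' is indeed a genuine obstruction, and neither your sketch nor the paper's proof closes it at the stated generality once $d\geq 2$ and $a>2$. Your radial reduction is sound (then $\nabla W(u)$ is a nonnegative scalar multiple of $u$ and monotonicity of $y\mapsto|y|^{a-2}y$ finishes the pointwise sign), and in $d=1$ both factors are odd and monotone, so the integrand is pointwise nonnegative. But for a general convex even $W$ the symmetrised integral can be strictly negative: take $d=2$, $a=4$, $W(x)=x_1^2$, and $X,\tilde X$ i.i.d.\ uniform on $\{(1,10),(2,0)\}$; a direct computation gives $\Ex\bigl[|X|^{2}X\cdot\nabla W(X-\tilde X)\bigr]=-\tfrac{186}{4}<0$, so the $W$-contribution cannot simply be discarded. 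An additional hypothesis (radial $W$, or $d=1$) is needed to make this step correct. On the smaller points: the paper does not pass through a smoothed $f_\epsilon$ for $a\in[0,2)$; it simply runs the $a=2$ argument and applies $\Ex|Y_t|^a\leq(\Ex|Y_t|^2)^{a/2}$, which is cheaper and equally accounts for $(a-2)_+$. Your substitution $\psi=\phi^{2/a}$ is a slightly cleaner way to close the ODE than the paper's monotonicity-threshold argument, but they land in the same bound.
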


\begin{rem}\rm 
If the minimum of $V$ was not at $x^*=0$, the result would have been
\begin{equation*}
    \sup_{t \geq 0} \Ex \lvert Y_t-x^* \rvert^a \leq \max\left\{ \Ex\lvert Y_0 -x^*\rvert^a,\left(\frac{d+(a-2)_+}{\lambda_V}\right)^{\tfrac{a}{2}}\right\}\,.
\end{equation*}
\end{rem}

\begin{proof}
We begin with the case $a\geq 2$. Apply the Itô formula for $\Yt\mapsto \lvert \Yt \rvert^a$ to get that 
\begin{multline*}
\d \lvert \Yt \rvert^a = - a \lvert \Yt \rvert^{a-2} \Yt \cdot \big(\nabla V(\Yt) + (\nabla W * \mut)(\Yt) \big)\dt + a(  d + a-2)\lvert \Yt \rvert^{a-2}  \dt + a d  \sqrt{2 } \lvert \Yt \rvert^{a-2} \Yt  \d \Bt \, .
\end{multline*}
We take $\tYt$ as an independent copy of $\Yt$ and use that for $X \sim \mu$ it holds $(\nabla W * \mu )(\eta) = \Ex [\nabla W(\eta - X) ]$. Then
\begin{flalign*}
\d \lvert \Yt \rvert^a = &- a \lvert \Yt \rvert^{a-2} \Yt \cdot \nabla V(\Yt) \dt - a \lvert \Yt \rvert^{a-2} \Yt \cdot \Ex [\nabla W(\Yt - \tYt) |\Yt] \dt \\ &+ a(  d + a-2)\lvert \Yt \rvert^{a-2}  \dt + a d \sqrt{2 } |Y_t|^{a-2} Y_t\d \Bt \, .
\end{flalign*}
Now, we take the expectation on the both sides and consider the property of the Brownian motion such that
\begin{flalign}
    \notag
\d \Ex \lvert \Yt \rvert^a =& - a \, \Ex [ \lvert \Yt \rvert^{a-2} \Yt \cdot \nabla V(\Yt) ] \dt - a \, \Ex [\lvert \Yt \rvert^{a-2} \Yt\cdot \nabla W(\Yt - \tYt)] \dt\\& + a(  d + a-2) \Ex [ \lvert \Yt \rvert^{a-2}]\dt \, .\label{ODE}
\end{flalign}
Our aim now is to estimate the terms involving $V$ and $W$. Since $V$ is $\lambda_V$-convex and $\nabla V(0) = 0$,  we have 
\begin{equation*} 
x \cdot \nabla V(x) \geq \lambda_V \lvert x \rvert^2 \quad \forall x \in \Rd\,.
\end{equation*} Note that
\begin{equation*}
\Ex  [\lvert \Yt \rvert^{a-2} \Yt \cdot \nabla V (\Yt)] \geq {\lambda_V} \Ex  [\lvert \Yt \rvert^a]\, .
\end{equation*}

Now, we will show the following claim 
\begin{equation} \label{eq:W_ex}
\Ex  \left[\lvert \Yt \rvert^{a-2} \, \Yt \cdot \nabla W(\Yt - \tYt)\right]  \geq 0 \, .
\end{equation}
Due to the Fubini theorem, the fact that $W$ is even (by radial symmetry), and so $\nabla W$ is odd, we can write that
\[
\Ex  \big[\lvert\Yt\rvert^{a-2}\,\Yt \cdot \nabla W (\Yt - \tYt)\big] = \Ex  \big[\lvert\tYt\rvert^{a-2}\,\tYt \cdot \nabla W (\tYt - \Yt)\big] = - \Ex  \big[\lvert\tYt\rvert^{a-2}\,\tYt \cdot \nabla W (\Yt - \tYt)\big] \,. \]
Consequently, \[\Ex \big[\lvert\Yt\rvert^{a-2}\,\Yt \cdot \nabla W (\Yt - \tYt)\big] = \tfrac{1}{2} \Ex  \big[(\lvert\Yt\rvert^{a-2}\,\Yt - \lvert\tYt\rvert^{a-2}\,\tYt) \cdot \nabla W(\Yt - \tYt)\big] \, .
\]
To show that this term is greater or equal than zero, let us consider the function $G\colon\R\to\R$ such that 
\begin{equation*}
G(t)=W(y-x+t(\lvert y\rvert^{a-2}y-\lvert x\rvert^{a-2}x)) \, .
\end{equation*}
We see that 
\begin{equation*}
G'(t) = \nabla  W(y-x+t(\lvert y\rvert^{a-2}y-\lvert x\rvert^{a-2}x)) \cdot (\lvert y\rvert^{a-2}y-\lvert x\rvert^{a-2}x) \, .
\end{equation*}
By existence of $\nabla W$ for all $x$ in $\Rd$, the derivative $G'(t)$ exists for all $t$ in $\R$ and particularly the quantity $G'(0)$ is the one we want to estimate.
By convexity assumption and radial symmetry, $\lvert w \rvert \geq \lvert z \rvert$ iff $W(w) \geq W(z)$ for any $w, z \in \Rd$. This implies $G'(t) \geq 0$ for any $t \geq 0$. Thus we need to show that 
\begin{equation*}
\big\lvert y-x+t(\lvert y\rvert^{a-2}y-\lvert x\rvert^{a-2}x) \big\rvert \geq \big\lvert y-x\big\rvert \qquad \forall x,y\in \Rd \ \text{and} \ t \geq 0 \, .
\end{equation*}
Since both sides are nonnegative, by taking square we get 
\begin{equation*}
\big\lvert y-x+t(\lvert y\rvert^{a-2}y-\lvert x\rvert^{a-2}x) \big\rvert^2 = \lvert y-x\rvert^2 + 2t \, (y-x) \cdot (\lvert y\rvert^{a-2}y-\lvert x\rvert^{a-2}x) + t^2 \big\lvert \lvert y\rvert^{a-2}y-\lvert x\rvert^{a-2}x \big\rvert^2 \, .
\end{equation*}
It remains to show that the middle term is nonnegative. We rearrange the term and use the Cauchy-Schwarz inequality $u\cdot v \leq \lvert u \rvert \, \lvert v \rvert$ for all $u$, $v$ from $\Rd$ such that 
\begin{flalign*}
(y-x) \cdot (\lvert y\rvert^{a-2}y-\lvert x\rvert^{a-2}x) &= \lvert y \rvert^a - \lvert x \rvert^{a-2} \, y\cdot x - \lvert y \rvert^{a-2} \, x\cdot y + \lvert x \rvert^{a} \\
&\geq \lvert y \rvert^a  - \lvert x \rvert^{a-1} \, \lvert y \rvert - \lvert y \rvert^{a-1} \, \lvert x \rvert + \lvert x \rvert^{a-1} = (\lvert y \rvert^{a-1} - \lvert x \rvert^{a-1}) \, (\lvert y \rvert - \lvert x \rvert) \geq 0 \, .
\end{flalign*}
The last inequality holds since the function $|\cdot|^{a-1}$ is monotone. Thus, we have shown the claim \eqref{eq:W_ex}.

We denote  $m_a(t) := \Ex \lvert \Yt \rvert^a $ and plug the identity from the last display into \eqref{ODE}. Summarizing the above observations  for any $a\geq 0$ we get
\begin{equation*}
m'_a (t) \leq a\left[ -{\lambda_V}m_a(t) + (d +a-2)  m_{a-2}(t)\right]\, .
\end{equation*}
Note that $m_a$ is decreasing whenever $m_a(t)\geq (d +a-2)  m_{a-2}(t)/\lambda_V$. 
Moreover, since the Jensen inequality implies that $m_{a-2}(t)\leq m_{a}^{(a-2)/a}(t)$, we find that 
\begin{flalign*}
 \tfrac{d +a-2}{\lambda_V}  m_{a-2}(t)\leq  \tfrac{d+a-2}{\lambda_V}m_{a}^{(a-2)/a}(t)\leq m_a(t)\,
\end{flalign*}
as long as $m_a(t)\geq ((d+a-2)/\lambda_V)^\frac{a}{2}$. By summing up the above estimates, we conclude with the claim.

In the case $0<a< 2$, we apply the Jensen inequality $\Ex|X_t|^a\leq \big(\Ex|X_t|^2\big)^{\frac{a}{2}}$.
\end{proof}

Before presenting our result on propagation of chaos, we remind the reader that we denote $\mu_t^N=\mu_t^{i,N}$ for any $i$ where $\boldsymbol{X}_t\sim \boldsymbol{\mu}_t=(\mu_t^{1,N},\ldots,\mu_t^{N,N})$.

\begin{theo}[Propagation of chaos]\label{theo:chaos}
We assume $V: \R^d \to \R$ attains its minimum at $x^*=0$ and is $\lambda_V$-convex for some $\lambda_V>0$, and $W: \R^d \to \R$ satisfies \ref{W} with some $q_W>1$ and $L_{q_W}>0$. Let $\mu$ be a solution to~\eqref{eq:model} for $\mu_0\in\cP_2(\R^d)\cap\cP_{2(q_W-1)}(\R^d)$. Then there exists constant $C=C(q_W,L_{q_W},\lambda_V,\mu_0,x^*)>0$ such that, for every $N \geq 1$, it holds
\begin{equation*}
\sup_{t \geq 0} W_2^2(\mu^N(t),\mu(t)) \leq C \frac{d^{2(q_W-1)}\lambda_V^{-2q_W}}{ N} \,.
\end{equation*}
Moreover, $C\ll +\infty$ when $\lambda_V \to 0$.
\end{theo}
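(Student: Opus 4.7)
The plan is to run a synchronous coupling between the particle system \eqref{eq:particle_system:model} and $N$ independent copies of the McKean--Vlasov SDE \eqref{eq:mckean_vlasov:model}, then apply It\^o's formula to the squared distance, exploit $\lambda_V$-convexity to generate a dissipative term, and use the independence of the copies to control a nonlocal variance term of order $1/N$.

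More precisely, for each $i=1,\dots,N$, let $Y_t^i$ solve \eqref{eq:mckean_vlasov:model} driven by the Brownian motion $B_t^i$ and started from $Y_0^i=X_0^i\sim\mu_0$, so that $Y_t^i\sim\mu_t$ and the family $(Y_t^i)_i$ is independent. By exchangeability it suffices to estimate $\Ex|X_t^1-Y_t^1|^2$. Applying It\^o (Lemma~\ref{lem:Ito}) to $|X_t^i-Y_t^i|^2$, the Brownian motions cancel and
\[
\tfrac{d}{dt}\,\tfrac{1}{2}\Ex|X_t^i-Y_t^i|^2
= -\Ex\bigl[(X_t^i-Y_t^i)\cdot(\nabla V(X_t^i)-\nabla V(Y_t^i))\bigr] - \Ex\bigl[(X_t^i-Y_t^i)\cdot R_t^i\bigr],
\]
where $R_t^i=\tfrac{1}{N}\sum_{j\neq i}\nabla W(X_t^i-X_t^j)-(\nabla W\ast\mu_t)(Y_t^i)$. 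The $\lambda_V$-convexity of $V$ immediately gives
\[
\Ex\bigl[(X_t^i-Y_t^i)\cdot(\nabla V(X_t^i)-\nabla V(Y_t^i))\bigr]\ \geq\ \lambda_V\,\Ex|X_t^i-Y_t^i|^2.
\]

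For the interaction term, I would split $R_t^i=A_t^i+B_t^i$, where
\[
A_t^i=\tfrac{1}{N}\sum_{j\neq i}\bigl[\nabla W(X_t^i-X_t^j)-\nabla W(Y_t^i-Y_t^j)\bigr],\qquad
B_t^i=\tfrac{1}{N}\sum_{j\neq i}\nabla W(Y_t^i-Y_t^j)-(\nabla W\ast\mu_t)(Y_t^i).
\]
By convexity and symmetry of $W$, the monotonicity of $\nabla W$ together with an exchangeability symmetrization of the pair $(i,j)$ yields $\Ex[(X_t^i-Y_t^i)\cdot A_t^i]\geq 0$, so this term only helps. For $B_t^i$, I would condition on $Y_t^i$ and observe that the summands are i.i.d.\ with conditional mean $(\nabla W\ast\mu_t)(Y_t^i)$; the usual variance bound gives $\Ex|B_t^i|^2\leq \tfrac{1}{N}\,\Ex|\nabla W(Y_t^i-Y_t^j)|^2$ for $j\neq i$. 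Absorbing via Young's inequality,
\[
|\Ex[(X_t^i-Y_t^i)\cdot B_t^i]|\ \leq\ \tfrac{\lambda_V}{2}\Ex|X_t^i-Y_t^i|^2+\tfrac{1}{2\lambda_V N}\Ex|\nabla W(Y_t^i-Y_t^j)|^2.
\]

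Combining the three estimates gives an ODE of the form $u'(t)\leq -\lambda_V u(t)+\lambda_V^{-1}N^{-1}M(t)$ with $u(t)=\Ex|X_t^1-Y_t^1|^2$ and $M(t)=\Ex|\nabla W(Y_t^1-Y_t^2)|^2$. Since $u(0)=0$, Gr\"onwall yields $u(t)\leq \lambda_V^{-2}N^{-1}\sup_{s\geq 0}M(s)$, which is uniform in time. The last step is to bound $M$: using that $W$ satisfies \ref{W} with $\nabla W(0)=0$ (symmetry) one has $|\nabla W(z)|\leq L_{q_W}(1+|z|^{q_W-1})|z|$, hence $|\nabla W(Y_t^1-Y_t^2)|^2\lesssim |Y_t^1-Y_t^2|^2+|Y_t^1-Y_t^2|^{2q_W}$. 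A $2q_W$-th moment bound for $|Y_t|$ via Theorem~\ref{thm:cont-moments} (which requires the $2(q_W-1)$-moment of $\mu_0$ after one application of Young to separate the leading term) produces factors of order $(d/\lambda_V)^{q_W-1}$, hence $\sup_t M(t)\leq C\,d^{2(q_W-1)}\lambda_V^{-2(q_W-1)}$. Plugging this back yields the stated bound $W_2^2(\mu_t^N,\mu_t)\leq u(t)\leq C d^{2(q_W-1)}\lambda_V^{-2q_W}/N$.

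The main obstacle I expect is tracking the dimension dependence $d^{2(q_W-1)}$ sharply: one must apply the $q_W$-power-growth Lipschitz bound with $\nabla W(0)=0$ (not a crude $|\nabla W(z)|\lesssim 1+|z|^{q_W}$) so that only the $2(q_W-1)$-moment of $\mu_0$ is needed, matching the hypothesis, and the moment bound from Theorem~\ref{thm:cont-moments} is invoked with exponent exactly $2(q_W-1)$. The convexity/symmetry manipulation that kills $A_t^i$, although cosmetically simple, is also delicate because the pair $(X_t^i,X_t^j)$ is not exchangeable with $(Y_t^i,Y_t^j)$ in law, so the symmetrization must be performed inside the expectation with careful bookkeeping.
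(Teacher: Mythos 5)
Your proposal is correct and follows essentially the same strategy as the paper: synchronous coupling, It\^o for the squared difference, a split of the interaction remainder into a monotonicity part (killed by convexity plus the $\nabla W$-odd symmetrization of the pair $(i,j)$, which the paper does via $\varphi_{i,j}+\varphi_{j,i}\geq 0$) and a fluctuation part whose second moment is $O(1/N)$ by the conditional-orthogonality of the i.i.d.\ $Y^j$'s, followed by Gr\"onwall. The only cosmetic difference is that you absorb the fluctuation term with Young's inequality to get a linear ODE, whereas the paper keeps it via Cauchy--Schwarz and integrates the resulting $\sqrt{\alpha}$-ODE; both yield the same $O(N^{-1})$ rate. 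Two small slips worth noting: the exchangeability concern you raise about $A_t^i$ is actually not an issue, because the symmetrization $\varphi_{i,j}+\varphi_{j,i}\geq 0$ holds pointwise before any expectation is taken; and your final moment computation, where you bound $|\nabla W(z)|\leq L_{q_W}(1+|z|^{q_W-1})|z|$ and invoke a $2q_W$-th moment, needs higher moments of $\mu_0$ than the hypothesis $\mu_0\in\cP_{2(q_W-1)}$ provides -- the paper instead uses $\min\{|z|,1\}\leq 1$ to get $|\nabla W(z)|\leq L_{q_W}(1+|z|^{q_W-1})$, which keeps the required moment order at exactly $2(q_W-1)$ as you correctly flagged was the goal.
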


\begin{proof}
Let $Y_t^i$ denote the solution of \eqref{eq:mckean_vlasov:model} where the Brownian motion $B_t$ is replaced with $B_t^i$ from \eqref{eq:particle_system} such that 
\begin{equation} \label{eq:mckean_vlasov_inter_i}
\d \Yti = -\left(\nabla V(\Yti) + (\nabla W * \mu(t)) (\Yri) \right)\dt + \sqrt{2} \d B_t^i \qquad \text{for }\ i = 1, \ldots , N \, .
\end{equation}
We subtract \eqref{eq:particle_system}~and \eqref{eq:mckean_vlasov_inter_i} and taking into account that Brownian motions in the equations are correlated we obtain  
\begin{equation*}
\Xti - \Yti = \Xsi - \Ysi - \int_s^t \big(\nabla V(\Xri) - \nabla V(\Yri)\big) \dr 
- \frac{1}{N} \sum_{\substack{j=1 \\ i \neq j}}^{N}  \int_s^t \big(\nabla W (\Xri - \Xrj) - (\nabla W * \mu(r)) (\Yri) \big) \dr \, .
\end{equation*}
Let us define\begin{equation*}
    \zeta_{i,j}(r):= (\Xri - \Yri)\cdot\big(\nabla W (\Xri - \Xrj) - (\nabla W * \mu(r)) (\Yri) \big)\,.
\end{equation*}
 Now we consider the transformation $\Xti - \Yti \mapsto \lvert \Xti - \Yti \rvert^2$. The Itô lemma implies that for $ i = 1, \ \ldots , N$ it holds
\begin{equation} \label{eq:ito_one_line}
\lvert \Xti - \Yti \rvert^2 = \lvert \Xsi - \Ysi \rvert^2 
- 2 \int_s^t (\Xri - \Yri) \cdot \big(\nabla V(\Xri) - \nabla V(\Yri)\big) \dr - \frac{2}{N} \sum_{\substack{j=1 \\ i \neq j}}^{N}  \int_s^t \zeta_{i,j}(r) \dr \,.
\end{equation}
We split $\zeta_{i,j}$ and define 
\begin{multline*}
\zeta_{i,j} (r) = (\Xri - \Yri)\cdot \left[\nabla W (\Xri - \Xrj) -  \nabla W(\Yri - \Yrj) \right] + (\Xri - \Yri) \cdot \left[ \nabla W(\Yri - \Yrj) - (\nabla W * \mu(r)) (\Yri) \right] \\=: \vp_{i,j} (r) + \psi_{i,j} (r) \, .
\end{multline*}
Since $\nabla W$ is odd, we find that
\begin{align*}
    \vp_{i,j}(r) + \vp_{j,i}(r)  &= 
    (\Xri - \Yri)\cdot \left[\nabla W (\Xri - \Xrj) -  \nabla W(\Yri - \Yrj) \right] \\ 
    &\qquad + (\Xrj - \Yrj)\cdot \left[-\nabla W (\Xri - \Xrj) +  \nabla W(\Yri - \Yrj) \right]  \\
    &= \left[ (\Xri - \Xrj) - (\Yri - \Yrj) \right] \cdot  \left[ \nabla W (\Xri - \Xrj) -  \nabla W(\Yri - \Yrj) \right] \geq 0 \, ,
\end{align*}
where the last inequality follows directly from the convexity of $W$. Consequently
\begin{equation*}
\sum_{\substack{j,i=1, \, i > j}}^{N}  \big(\vp_{i,j}(r) + \vp_{j,i}(r) \big) \geq 0 \, .
\end{equation*}
We denote 
\begin{equation*}
\vt_{i,j} (r):= \nabla W(\Yri - \Yrj) - (\nabla W * \mu(r)) (\Yri) 
\end{equation*}
Then the Cauchy--Schwarz inequality such that 
\begin{flalign*}
\left| \Ex \left[ \sum_{j=1}^{N} \psi_{i,j} (r) \right]\right| &= \left| \Ex \left[ (\Xri - \Yri) \cdot  \vtij (r) \right]\right| \leq 
\sqrt{\Ex \left[ \left\lvert \Xri - \Yri  \right\rvert^2 \right]}  \sqrt{ \Ex \left[ \left\lvert \Sigma_{j=1}^{N} \vtij \right \rvert^2 \right] } \\ &= \sqrt{\Ex \left[ \left\lvert \Xri - \Yri  \right\rvert^2 \right]} 
 \sqrt{ \sum_{j=1}^{N} \Ex [\lvert \vtij (r) \rvert^2] + 2 \sum_{\substack{j,k=1, \, j < k}}^{N}  \Ex [\vtij (r) \cdot \vtik(r)]} \, .
\end{flalign*}
We consider the last sum on the right-hand side above. The condition $j < k$ implies that at least one index of $j$ and $k$ is not equal to $i$. Then
\begin{align*}
\Ex [\vtij (r) \cdot \vtik(r)|\Yri] &=\Ex  [ \nabla W(\Yri - \Yrj) \cdot \nabla W(\Yri - \Yrk)|\Yri] \\
&\quad- (\nabla W * \mu(r)) (\Yri) \cdot \Ex [\nabla W(\Yri - \Yrj) |\Yri] \\  
&\quad -  (\nabla W * \mu(r)) (\Yri)\cdot \Ex  [ \nabla W(\Yri - \Yrk)|\Yri ]  + \lvert (\nabla W * \mu(r)) (\Yri) \rvert^2\,.
\end{align*}
Since $\Yrj$ and $\Yrk$ are independent copies of $\Yr^{1} \sim \mu(r)$, we infer that
\begin{align*}
\Ex [\vtij (r) \cdot \vtik(r)|\Yri] 
=& \Ex[ \nabla W(\Yri - \Yrj)|\Yri] \cdot \Ex [ \nabla W(\Yri - \Yrk)|\Yri] \\
&-(\nabla W * \mu(r)) (\Yri) \cdot \Ex [\nabla W(\Yri - \Yrj)|\Yri ] \\
&-   (\nabla W * \mu(r)) (\Yri) \cdot  \Ex[\nabla W(\Yri - \Yrk)|\Yri] + \lvert (\nabla W * \mu(r)) (\Yri) \rvert^2 \, .
\end{align*}
Taking into account that $\Ex[\nabla W (\Yri - \Yrj)|\Yri] = (\nabla W * \mu(r)) (\Yri)$ and the fact that each $\Yrj$ has the same law $\mu(r)$ and  $\Ex [\nabla W(\Yri - \Yrj) |\Yri] = \Ex [\nabla W(\Yri - \Yrk)|\Yri]$, we get 
\begin{equation*}
\Ex [\vtij (r) \cdot \vtik (r)|\Yri] =0 \qquad \mbox{whenever} \qquad j \neq k \, 
\end{equation*}
which implies
\begin{equation*}
\left| \Ex \left[ \sum_{j=1}^{N} \psi_{i,j} (r) \right]\right| \leq \sqrt{\Ex \left[ \left\lvert \Xri - \Yri  \right\rvert^2 \right]} \cdot \sqrt{ \sum_{j=1}^{N} \Ex [\lvert \vtij (r) \rvert^2]} \, .
\end{equation*}
We have
\begin{align*}
\Ex \left[ \lvert \vtij (r) \rvert^2 \right] &= \Ex \left[ \lvert \nabla W(\Yri - \Yrj) - (\nabla W * \mu(r)) (\Yri) \rvert^2 \right] \\
&= \Ex \left[ \lvert \nabla W(\Yri - \Yrj) \rvert^2 \right] - \lvert  (\nabla W * \mu(r)) (\Yri) \rvert^2 \leq \Ex\left[ \lvert \nabla W(\Yri - \Yrj) \rvert^2 \right] \,.
\end{align*}
Due to \ref{W}, $\nabla W$ is $q_W$-power growth Lipschitz, we can then apply Theorem \ref{thm:cont-moments} to get
\begin{align*}
\Ex \lvert \nabla W(\Yri - \Yrj) \rvert^2  &\leq 4L_{q_W}^2 \left(1+\Ex\lvert \Yrj \rvert^{2q_W-2} +  \Ex \lvert \Yrk \rvert^{2q_W-2} \right)\\
&\leq  4L_{q_W}^2\left(1+2\left(\frac{d+2(q_W-2)_+}{\lambda_V}\right)^{q_W-1}\right)\\
&\leq 4L_{q_W}^2\left(1+\max\{2,2^{q_W-1}\}\frac{d^{q_W-1}}{\lambda_V^{q_W-1}}+\max\{2^{q_W},2^{2q_W-2}\}\frac{(q_W-2)_+^{q_W-1}}{\lambda_V^{q_W-1}}\right)=:\mathcal{M}\, ,
\end{align*}

where we, without loss of generality, assume $Y_0$ is such that $\Ex|Y_0|^{2(q_W-1)}\leq (\tfrac{d+2(q_W-2)_+}{\lambda_V})^{q_W-1}$ and we let $\mathcal{M}=\mathcal{M}(q_W,L_{q_W},\lambda_V,\mu_0)$. Thus we have 
\begin{equation*}
\left| \Ex \left[ \sum_{j=1}^{N} \psi_{i,j} (r) \right]\right| \leq \sqrt{\Ex  \left\lvert \Xri - \Yri  \right\rvert^2 } \sqrt{N\mathcal{ M}}\,.
\end{equation*}
We denote 
\begin{equation*}
\alpha (t): = \Ex \left\lvert \Xti - \Yti  \right\rvert^2  \, .
\end{equation*}
From \eqref{eq:ito_one_line} and $\lambda_V$-convexity of $V$ we infer that 
\begin{equation*}
\alpha (t) \leq \alpha (s) -2 \lambda_V \int_s^t \alpha (r) \dr + 2\frac{ \mathcal{M}}{\sqrt{N}} \int_s^t \sqrt{\alpha(r)} \dr \, .
\end{equation*}
This means that 
\begin{equation*}
\alpha^\prime (t) \leq -2 \lambda_V \alpha (t) + 2\tfrac{ \mathcal{M}}{\sqrt{N}} \sqrt{\alpha (t)},
\end{equation*}
and by the Grönwall lemma 
we get 
\begin{equation*}
\sqrt{\alpha (t)} \leq \tfrac{\mathcal{M}}{\lambda_V \sqrt{N}} (1- \mathrm{e}^{-\lambda_V t}) \, .\qedhere
\end{equation*}
\end{proof}

\begin{rem}[Relaxing $\lambda$-convexity]\label{rem:relax-lambda-conv} \rm 
Problems involving $\lambda \in\R$ are studied e.g. by~\cite{carrillo_prop_chaos_lambda,durmus_propagation}. In fact, \cite[Theorem~2]{carrillo_prop_chaos_lambda} provides propagation of chaos for $\lambda\in\R$ under extra assumptions ($V,W$ are bounded below and $W$ is doubling), but they do not specify the rate of convergence. On the other hand, the assumptions of \cite{durmus_propagation} are less comparable to our regime. In \cite[Corollary~3]{durmus_propagation}, propagation of chaos is shown in the $1$-Wasserstein distance.  We point out that \cite{durmus_propagation} captures the following natural example: $V(x)=|x|^4-a|x|^2$, $a>0$, and $W(x)=\pm |x|^2$. For more information see \cite{surv-I,surv-II}.
\end{rem}

\section{Numerical approximation of the local problem}\label{sec:local-problem}
 In this section we first prove Theorem~\ref{thm:alg_bound-intro} yielding that the Algorithm~\ref{alg:dgf} approximates the gradient flow solutions of \eqref{eq:model} with $W\equiv0$. Then, as a consequence, we show the proof of Theorem~\ref{thm:alg_bound-nonlocal}, i.e., we motivate that Algorithm~\ref{alg:particle}  approximates \eqref{eq:model} with $W\not\equiv0$.

Alternating steps of Algorithm~\ref{alg:dgf} go along gradient flows of $\cFV$ and $\cFE$, respectively, which is provided below.
\begin{lem} 
Let $V$ be convex. Step {\rm (1)} of Algorithm~\ref{alg:dgf} is a step of length $\tau$ along the gradient flow of the potential functional~$\cFV$.
\end{lem}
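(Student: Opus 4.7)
The plan is to identify step (1) of Algorithm~\ref{alg:dgf} with a single iterate of the JKO scheme for the functional $\cFV$, which is the canonical notion of a discrete step of length $\tau$ along the gradient flow in $(\cP_2(\R^d), W_2)$. That is, I want to show
\[
\vr_{k+\frac{1}{2}} \in \underset{\nu \in \cP_2(\R^d)}{\argmin} \left\{ \cFV[\nu] + \tfrac{1}{2\tau} W_2^2(\nu, \vr_k) \right\}
\]
and that this minimizer is precisely $(\prox^\tau_V)_{\#}\vr_k$.

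The main step is a reformulation via transport maps. For any $\nu \in \cP_2(\R^d)$, writing $W_2^2(\nu,\vr_k) = \inf \{\int |T(x)-x|^2\,d\vr_k(x) : T_\#\vr_k = \nu\}$ (using an optimal plan/map), and using that $\cFV[\nu] = \int V(T(x))\,d\vr_k(x)$ whenever $\nu = T_\#\vr_k$, one obtains
\[
\cFV[\nu] + \tfrac{1}{2\tau} W_2^2(\nu, \vr_k) = \inf_{T:\,T_\#\vr_k = \nu} \int_{\R^d} \Bigl(V(T(x)) + \tfrac{1}{2\tau} |T(x)-x|^2 \Bigr)\, d\vr_k(x).
\]
Minimizing jointly over $\nu$ and admissible $T$ reduces to the free minimization over arbitrary measurable $T:\R^d \to \R^d$, which by Fubini is achieved by pointwise minimization of the integrand. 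Since $V$ is convex, the map $y \mapsto V(y) + \tfrac{1}{2\tau}|y-x|^2$ is strictly convex and coercive, so its unique minimizer is exactly $\prox^\tau_V(x)$ by definition~\eqref{def:proximal}. Setting $T^\star(x) := \prox^\tau_V(x)$ and $\nu^\star := T^\star_\# \vr_k = \vr_{k+\frac{1}{2}}$ gives the infimum, so $\vr_{k+\frac{1}{2}}$ solves the JKO minimization.

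The only point that requires a word of care is measurability/integrability: strict convexity of $V$ (implied by the hypothesis here, modulo the standard argument) ensures $\prox^\tau_V$ is single-valued and continuous (in fact $1$-Lipschitz as the resolvent of a maximal monotone operator), hence a bona fide Borel transport map; and the second moment of $\vr_{k+\frac{1}{2}}$ is finite since $|\prox^\tau_V(x)| \le |x| + |\prox^\tau_V(0)|$. Together with $\vr_k \in \cP_2(\R^d)$ this validates all integrals above and the application of Brenier/transport-map representation. The conclusion is that $\vr_{k+\frac{1}{2}} = (\prox^\tau_V)_{\#}\vr_k$ coincides with one JKO step of size $\tau$ for $\cFV$, i.e., a step of length $\tau$ along the gradient flow of $\cFV$.
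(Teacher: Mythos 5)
Your core idea matches the paper's: both identify step (1) with a JKO step for $\cFV$ and reduce the minimization to a pointwise argument involving $y\mapsto V(y)+\frac{1}{2\tau}|y-x|^2$, whose unique minimizer is $\prox^\tau_V(x)$. However, there is a genuine gap in the opening reformulation. You write
\[
W_2^2(\nu,\vr_k) = \inf\bigl\{\textstyle\int |T(x)-x|^2\,\mathrm{d}\vr_k(x) : T_\#\vr_k = \nu\bigr\},
\]
i.e.\ you replace the Kantorovich (plan) formulation by the Monge (map) formulation. These are not equal in general: the Monge infimum is only $\geq W_2^2$, with equality guaranteed when $\vr_k$ is absolutely continuous (or at least does not charge sets of Hausdorff dimension $\leq d-1$). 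The algorithm starts from an arbitrary $\vr_0=\mu_0\in\cP_2(\R^d)$, so this can fail at $k=0$; more concretely, if $\vr_k$ is a Dirac mass then the right-hand side is $+\infty$ for every $\nu$ that is not a Dirac mass, while the left-hand side is finite, so your claimed identity is simply false and the subsequent ``minimize jointly over $\nu$ and $T$'' step does not see most competitors $\nu$. You do invoke ``Brenier/transport-map representation'' at the end, but you never verify the regularity hypothesis it needs, and the lemma must hold for all $k$, including $k=0$.

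The fix is exactly what the paper does and is a one-line change of bookkeeping: for each $\nu$ choose an optimal plan $\boldsymbol{\gamma}\in\Gamma_o(\nu,\vr_k)$ (which always exists), write $\cFV[\nu]+\tfrac1{2\tau}W_2^2(\nu,\vr_k)=\iint\bigl(V(x_2)+\tfrac1{2\tau}|x_2-x_1|^2\bigr)\,\mathrm d\boldsymbol{\gamma}$, bound the integrand below pointwise by its value at $x_2=\prox^\tau_V(x_1)$, and then note that the pushforward under $\prox^\tau_V$ gives an admissible (not necessarily optimal) coupling between $\vr_k$ and $\vr_{k+\frac12}$, so $\int|\prox^\tau_V(x_1)-x_1|^2\,\mathrm d\vr_k\geq W_2^2(\vr_{k+\frac12},\vr_k)$. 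This yields $J(\nu)\geq J(\vr_{k+\frac12})$ for every $\nu$ with no regularity assumptions on $\vr_k$. A minor additional remark: you write ``strict convexity of $V$ (implied by the hypothesis here)'', but the hypothesis is only that $V$ is convex; what is strictly convex (and what you actually use) is $y\mapsto V(y)+\tfrac1{2\tau}|y-x|^2$, which is fine, and your Lipschitz/second-moment observations about $\prox^\tau_V$ are correct.
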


\begin{proof}
    We mimic the steps in the proof of \cite[Proposition 10.4.2{\it (ii)}]{Ambrosio-Gigli-Savare}.
    For every $\nu \in \operatorname{dom} \cF_V$ and $\boldsymbol{\gamma} \in \Gamma_o (\nu, \vrk)$ we have 
    \begin{align*}
    \cFV[\nu] + \frac{1}{2 \tau} W_2^2(\nu, \vrk) &= 
    \iint\left( V(x_2) + \frac{1}{2 \tau} \lvert x_2 - x_1 \rvert^2\right) \, \mathrm{d} \boldsymbol{\gamma} \\
    &\geq \int\left( V(\prox_V^\tau (x_1)) + \frac{1}{2 \tau} \lvert \prox_V^\tau (x_1) - x_1 \rvert^2\right) \, \mathrm{d} \vrk \\
    &\geq \cF_V [\vrkp] + \frac{1}{2 \tau} W_2^2 (\vrkp, \vrk)\,.
    \end{align*}
    Thus, $\vrkp$ is a minimizer of the scheme, as the proximal operator provides a unique solution to the strictly convex optimization problem.
\end{proof}

\begin{lem}
    Step {\rm (2)} of Algorithm~\ref{alg:dgf} is a~step of length $\tau$ along the  gradient flow of the entropy functional~$\cFE$.
\end{lem}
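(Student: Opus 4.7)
The plan is to invoke the classical identification (Jordan--Kinderlehrer--Otto, cf.~\cite[Chapter~11]{Ambrosio-Gigli-Savare}) of the Wasserstein gradient flow of the Boltzmann entropy with the heat equation, and then to recognize that convolution with $g_\tau$ is precisely the heat semigroup evaluated at time $\tau$.

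First I would recall that $\cFE$ is proper, lower semicontinuous, and $0$-convex along generalized geodesics in $(\cP_2(\R^d), W_2)$, so by the EVI characterization the gradient flow of $\cFE$ starting from any $\nu_0 \in \operatorname{dom} \cFE$ is the unique curve $t \mapsto \nu_t$ solving, in the distributional sense, the heat equation
\begin{equation*}
\partial_t \nu_t = \Delta \nu_t\,, \qquad \nu_0 \text{ given}\,.
\end{equation*}
This is the setting in which the gradient flow of the entropy coincides with the linear diffusion, and it specializes the general framework recalled in the preliminaries to the case $V\equiv 0$, $W\equiv 0$.

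Next I would observe that the density $g_\tau(x) = \exp(-|x|^2/(4\tau)) / (4\pi\tau)^{d/2}$ introduced in Section~\ref{sec:prelim} is exactly the fundamental solution of the heat equation: it satisfies $\partial_t g_t = \Delta g_t$ with $g_t \to \delta_0$ as $t \to 0^+$. By linearity and uniqueness of the heat equation in $\cP_2(\R^d)$, the solution at time $\tau$ starting from the initial datum $\vr_{k+\frac{1}{2}}$ is therefore
\begin{equation*}
\nu_\tau = \vr_{k+\frac{1}{2}} \ast g_\tau\,,
\end{equation*}
which is exactly $\vr_{k+1}$ from Step~(2) of Algorithm~\ref{alg:dgf}. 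The stochastic formulation $X_{k+1} = X_{k+\frac{1}{2}} + Z_{k+1}$ with $Z_{k+1}\sim g_\tau$ drawn independently of $X_{k+\frac{1}{2}}$ is simply the translation to the level of random variables of the fact that the law of the sum of independent variables is the convolution of their laws. Hence Step~(2) advances the gradient flow of $\cFE$ by time $\tau$, which is the claim.

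There is no real obstacle here: the only nontrivial ingredient is the Jordan--Kinderlehrer--Otto theorem identifying the entropy gradient flow with the heat equation, which is quoted from \cite{Ambrosio-Gigli-Savare}; the rest is the explicit form of the heat kernel and the fact that convolution by the heat kernel is the heat semigroup.
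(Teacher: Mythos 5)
Your proof is correct and takes essentially the same route as the paper: both invoke the (generalized-)geodesic convexity of $\cFE$ together with \cite[Theorem~11.1.4]{Ambrosio-Gigli-Savare} to identify the Wasserstein gradient flow of the entropy with the heat equation, and then observe that evolving under the heat semigroup for time $\tau$ is precisely convolution with the Gaussian kernel $g_\tau$, i.e.\ step (2) of Algorithm~\ref{alg:dgf}. The paper additionally phrases the conclusion probabilistically (the flow is the density of Brownian motion, citing \cite[Theorem~2.1.5]{oksendal2010stochastic}), which you capture equivalently by noting that adding an independent $Z_{k+1}\sim g_\tau$ convolves the law; no substantive difference.
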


\begin{proof}
Note that from Lemma~\ref{lemma:dmm3}{\it (v)} it follows that $\cFE$ is geodesically convex and \cite[Theorem 11.1.4]{Ambrosio-Gigli-Savare} shows that
there exists a unique gradient flow $(\mu_t)_{t\geq 0}$ starting at $\vr_{k+\frac{1}{2}}\in\cP_2(\R^d)$ and this curve is the unique solution of the heat equation $\partial_t\mu=\Delta \mu$ (in the sense of distributions), i.e., it is given as a~convolution with $g_\tau$. Then $\mu_t$ is a density of Brownian motion, see e.g. \cite[Theorem 2.1.5]{oksendal2010stochastic}. Consequently, the step between $\vr_{k+\frac{1}{2}}$ and $\vr_{k+1}$ is a Brownian motion of a length $\tau$. 
\end{proof}

Let us provide bounds on moments for a discrete-in-time Markov Chain defined by Algorithm~\ref{alg:dgf}.
\begin{theo}\label{theo:moments} 
   Let $a\geq 0$, $\vr_0\in\cP_2(\R^d)\cap\cP_a(\R^d)$, $V$ be $\lambda_V$-convex function with $\lambda_V>0$  that attains its minimum at $x^*=0$, and for $k=1,\dots,n$ let $X_k$ be as in Algorithm~\ref{alg:dgf}. Then there exists constant $C_{a}=C_{a}(a,\vr_0,x^*,\lambda_V)>0$ such that for all  $0<\tau<1/\lambda_V$ we have  
   \[
   \sup_k \Ex |X_k|^a\leq  C_{a}{d^{\frac{a}{2}}}\lambda_V^{-\frac{a}{2}}\,.
   \]
   Moreover, $C_{a}\ll +\infty$ when $\lambda_V \to 0$.
\end{theo}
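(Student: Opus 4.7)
The plan is to prove the bound by induction on $a$, handling the proximal step and the Gaussian step separately at each iteration, mirroring the structure of Theorem~\ref{thm:cont-moments}.

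Since $V$ is $\lambda_V$-convex with minimum at $0$, the resolvent $\prox^\tau_V$ is standard to be $\frac{1}{1+\tau\lambda_V}$-Lipschitz and it fixes the origin because $0\in\partial V(0)$. Hence pointwise $|X_{k+\frac12}|\leq (1+\tau\lambda_V)^{-1}|X_k|$, and for every $a\geq 0$
\begin{equation*}
\Ex|X_{k+\frac12}|^{a}\leq (1+\tau\lambda_V)^{-a}\,\Ex|X_k|^{a}.
\end{equation*}
For the base case $a=2$, expanding $|X_{k+\frac12}+Z_{k+1}|^{2}$ and using independence together with $\Ex Z_{k+1}=0$ and $\Ex|Z_{k+1}|^2=2d\tau$ yields the linear recurrence $m_2(k+1)\leq m_2(k)/(1+\tau\lambda_V)^{2}+2d\tau$, whose fixed point is bounded by $4d/\lambda_V$ whenever $\tau\lambda_V<1$, so $\sup_k m_2(k)\leq\max(\Ex|X_0|^2,4d/\lambda_V)$. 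For $a\in(0,2)$, Jensen reduces the problem to the case $a=2$.

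For the inductive step $a>2$, I interpolate the Gaussian step by a genuine Brownian motion: set $Y_t:=X_{k+\frac12}+\sqrt{2}(B_t-B_{k\tau})$ for $t\in[k\tau,(k+1)\tau]$, so that $Y_{k\tau}=X_{k+\frac12}$ and $Y_{(k+1)\tau}\sim X_{k+1}$. Applying It\^o's formula (Lemma~\ref{lem:Ito}) to $|Y_t|^{a}$ and using $\Delta|y|^a=a(d+a-2)|y|^{a-2}$, after taking expectations the martingale term vanishes and we get
\begin{equation*}
\Ex|X_{k+1}|^{a}=\Ex|X_{k+\frac12}|^{a}+a(d+a-2)\int_{k\tau}^{(k+1)\tau}\Ex|Y_t|^{a-2}\dt.
\end{equation*}
The integrand is controlled through $|Y_t|^{a-2}\leq 2^{(a-3)_+}(|X_{k+\frac12}|^{a-2}+2^{(a-2)/2}|B_t-B_{k\tau}|^{a-2})$. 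The first summand is bounded by the inductive hypothesis applied at the level $a-2$ (using Jensen and the $a=2$ case when $a\in(2,4]$), while the Gaussian increment is controlled by the standard estimate $\Ex|B_t-B_{k\tau}|^{a-2}\leq C_{a-2}\,d^{(a-2)/2}\tau^{(a-2)/2}$.

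Combining the proximal contraction, the It\^o identity, and these two estimates, one obtains a recurrence of the form
\begin{equation*}
m_a(k+1)\leq (1+\tau\lambda_V)^{-a}\,m_a(k)+B_a\,\tau,\qquad B_a=C_a\,d^{a/2}\lambda_V^{-(a-2)/2},
\end{equation*}
and the elementary bound $1-(1+\tau\lambda_V)^{-a}\geq a\tau\lambda_V(1+\tau\lambda_V)^{-a}\gtrsim a\tau\lambda_V$ for $\tau\lambda_V<1$ yields the fixed point of order $d^{a/2}\lambda_V^{-a/2}$. A monotone comparison then gives $\sup_k\Ex|X_k|^{a}\leq\max(\Ex|X_0|^{a},\ C'_a d^{a/2}\lambda_V^{-a/2})$, which is absorbed into a single constant $C_a(a,\vr_0,x^*,\lambda_V)\cdot d^{a/2}\lambda_V^{-a/2}$, closing the induction. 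The stated behaviour $C_a\ll+\infty$ as $\lambda_V\to 0$ holds because $\Ex|X_0|^{a}\cdot d^{-a/2}\lambda_V^{a/2}\to 0$ and the constants arising from the It\^o term and the Gaussian moments are $\lambda_V$-independent.

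The main obstacle is making this induction close with the correct scaling. A crude Minkowski/triangle-type estimate for the Gaussian step would contribute an additive term of order $\sqrt{\tau d}$ to $(\Ex|X_{k+1}|^{a})^{1/a}$, producing a non-uniform-in-$\tau$ fixed point of order $d^{a/2}/(\tau^{1-a/2}\lambda_V^{a})$; only the It\^o-based identity above combines the proximal contraction rate $(1+\tau\lambda_V)^{-a}$ with a Gaussian forcing of the right order $\tau$ to recover the sharp $d^{a/2}\lambda_V^{-a/2}$ bound and keep the constant bounded as $\lambda_V\to 0$.
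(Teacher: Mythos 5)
Your proof is correct, and for the core inductive step $a>2$ it takes a genuinely different route than the paper. The paper's proof handles the Gaussian step $X_{k+1}=X_{k+\frac12}+Z_{k+1}$ via the deterministic Taylor-type estimate of Lemma~\ref{lem:power-est},
\[
\Ex|X+Z|^a \leq \Ex|X|^a + C_a\bigl(\Ex|X|^{a-2}|Z|^2 + \Ex|Z|^a\bigr),
\]
which, after using independence, $\Ex|Z_{k+1}|^2=2\tau d$, and the moment bound at level $a-2$, produces directly the recurrence $A_{k+1}\leq (1+\tau\lambda_V)^{-a/2}A_k + Cd^{a/2}\tau\lambda_V^{-(a-2)/2}$. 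You instead embed the Gaussian step into a genuine Brownian interpolation and apply It\^o's formula, exactly mirroring the continuous-time argument of Theorem~\ref{thm:cont-moments}; the bookkeeping then lands you on the same recurrence. Both close the induction because the forcing is of order $\tau$ (not $\tau^{1/2}$) and the contraction is of order $1-\text{const}\cdot\tau\lambda_V$, so the fixed point scales like $d^{a/2}\lambda_V^{-a/2}$. What the paper's Lemma~\ref{lem:power-est} buys is that it is purely deterministic and reused as a standalone auxiliary fact; what your It\^o route buys is conceptual unity with the continuous-time moment bound and an exact (not merely one-sided) identity for the Gaussian increment before any estimation is done. Two minor remarks: your Lipschitz bound $|\prox^\tau_V(x)|\leq(1+\tau\lambda_V)^{-1}|x|$ is slightly sharper than the paper's Corollary~\ref{cor:prox-contracts}, which only gives $(1+\tau\lambda_V)^{-1/2}|x|$ --- either suffices; and your remark that ``only the It\^o-based identity'' recovers the sharp $\mathcal{O}(\tau)$ forcing is overstated: the paper's deterministic Lemma~\ref{lem:power-est} does the same job, so the real dichotomy is between either of these second-order expansions and a crude Minkowski bound, which indeed would fail.
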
 

\begin{rem}
\rm
If the minimum of $V$ was not at $x^*=0$, then the result would have been
$$
\sup_k \Ex |X_k-x^*|^a\leq  C_{a}{d^{\frac{a}{2}}}\lambda_V^{-\frac{a}{2}}\,.
$$
\end{rem}

\begin{proof} {We will prove that for some $C_a$, such that $C_a\ll +\infty$ when $\lambda_V \to 0$, it holds
\begin{equation}\label{eq:step-3}
A_{k+1} := \Ex |\Xki|^a \leq C \Big(\tfrac{d}{\lambda_V}\Big)^{\frac{a}{2}}\,.
\end{equation} Steps~1 and~2 provide \eqref{eq:step-3} for $a\in[0,2]$, while in Step 3 we concentrate on $a>2$. Furthermore, in Step~3  we assume, as the induction condition, that~\eqref{eq:step-3} is satisfied for $a-1$ and we infer that the induction condition is true for $a$.}\newline

\noindent {\bf Step 1. } We show that $\Ex |\Xki|^2 \leq \widetilde{C} \tfrac{d}{\lambda_V}$ for $\widetilde{C}$ such that $\widetilde{C} \to 4$ when $\lambda_V \to 0$.
We note that 
\begin{equation*}
\Ex|\Xki|^2 =\Ex|\Xkip + Z_{k+1}|^2 = \Ex | \Xkip |^2 + \Ex |Z_{k+1}|^2
 + 2 \Ex [\Xkip \cdot Z_{k+1}] \, .
\end{equation*}
We take into consideration that $\Xkip$ and $Z_{k+1}$ are independent and we apply Corollary~\ref{cor:prox-contracts} and Lemma~\ref{lemma:ath_mom_bounds} (cf. Remark~\ref{rem:ath_mom_bounds}) to get 
\begin{flalign*}
\Ex|\Xki|^2 &\leq \tfrac{1}{1+\tau \lambda_V}  \Ex|\Xk|^2 + 2 \tau d\leq \left(\tfrac{1}{(1 + \tau \lambda_V)^2}\right)^{k+1} \Ex |X_0|^2 + 2 \tau d \sum_{i=0}^{k} \left( \tfrac{1}{(1 + \tau \lambda_V)^2} \right)^i \, .
\end{flalign*}
Since by assumption $\tau<1/\lambda_V$, we have that $\tfrac{1}{1 + \tau \lambda_V} \leq 1$,  
\begin{equation*}
\sum_{i=0}^k \big( \tfrac{1}{(1+\tau \lambda_V)^2} \big)^i \leq \sum_{i=0}^{+\infty} \big( \tfrac{1}{(1+\lambda_V\tau)^2} \big)^i \leq \tfrac{(1+\lambda_V)^2}{(1+\lambda_V\tau)^2-1} = \tfrac{(1+\lambda_V)^2}{\lambda_V\tau (2+\lambda_V\tau)}\leq \tfrac{4}{3} \tfrac{1}{\lambda_V\tau} 
\end{equation*}
and we get 
\begin{equation*} 
\Ex|\Xki|^2 \leq \Ex|X_0|^2 + \tfrac{8}{3}\tfrac{d}{\lambda_V} = \left( \tfrac{\lambda_V}{d} \Ex|X_0|^2 + \tfrac{8}{3} \right) \tfrac{d}{\lambda_V}\,.
\end{equation*}
Now we denote the term in the brackets as constant $\widetilde{C} := \tfrac{\lambda_V}{d} \Ex|X_0|^2 + \tfrac{8}{3}$.\newline

\noindent {\bf Step 2. } We aim to bound $\Ex|\Xki|^a$  for $a\in[0,2)$. We use the Jensen inequality and  Step~1 to obtain
\begin{equation*}
\Ex |\Xki|^a = (\Ex |\Xki|^2)^{\frac{a}{2}}\leq \left(\widetilde{C} \tfrac{d}{\lambda_V}\right)^{\frac{a}{2}}\,.
\end{equation*}

\medskip
 
\noindent {\bf Step 3. } {Suppose $a>2$.}  Let us assume that \eqref{eq:step-3} holds for $a-1$. By Lemma~\ref{lem:power-est}  with  $X:= \Xkip$ and $Z:=Z_{k+1}$ we get 
\begin{flalign*}
A_{k+1} &:= \Ex |\Xkip + Z_{k+1}|^a \leq \Ex|\Xkip|^a + C \Ex \Big[|\Xkip |^{a-2} |Z_{k+1}|^2 \Big] + C \Ex |Z_{k+1}|^a \\ &=: \mathrm{I} + \mathrm{II} + \mathrm{III} \, .
\end{flalign*}
Note that  by Corollary~\ref{cor:prox-contracts} we can estimate
\begin{equation*}
\mathrm{I} = \Ex|\Xkip|^a \leq \Big(\tfrac{1}{1+ \tau \lambda_V} \Big)^{a}  \Ex | \Xk| ^a \,.
\end{equation*}
As for $\mathrm{II}$, we notice that, by the induction assumption, \eqref{eq:step-3} holds for $a-2$. Since $\Ex |Z_{k+1}|^2 = 2\tau d$ (cf. Lemma \ref{lemma:ath_mom_bounds} and Remark \ref{rem:ath_mom_bounds}), we get that 
\begin{equation*}
\mathrm{II} = \Ex |\Xkip|^{a-2} \Ex |Z_{k+1}|^2 \leq C \Big(\tfrac{d}{\lambda_V} \Big)^{\frac{a-2}{2}} d \tau = C d^{\frac{a}{2}} \lambda^{-\frac{a-2}{2}} \tau\,.
\end{equation*}
We find a bound for $\mathrm{III}$ using Lemma~\ref{lemma:ath_mom_bounds} and recalling the assumption that $\tau< 1/\lambda_V$. Namely, we obtain 
\begin{equation*}
\mathrm{III} \leq C d^{\frac{a}{2}} \tau^{\frac{a}{2}} \leq C d^{\frac{a}{2}} \tau \lambda_V^{-\frac{a-2}{2}} \, .
\end{equation*}
Altogether we have 
\begin{equation*}
A_{k+1} = \mathrm{I} + \mathrm{II} + \mathrm{III} \leq 
\tfrac{1}{(1+\tau \lambda_V)^2} A_k + C d^{\frac{a}{2}} \tau \lambda_V^{-\frac{a-2}{2}} \, .
\end{equation*}
By iteration and noticing again that $\sum_{j=0}^k\Big( \tfrac{1}{(1+\tau \lambda_V)^2} \Big)^{j}\leq \frac{4}{3}\frac{1}{\tau\lambda_V}$, we get 
\begin{equation*}
A_{k+1} \leq \Big( \tfrac{1}{(1+\tau \lambda_V)^2} \Big)^{k+1} A_0 + C d^\frac{a}{2}\tau \lambda_V^{-\frac{a}{2}+1}\sum_{j=0}^k\Big( \tfrac{1}{(1+\tau \lambda_V)^2} \Big)^{j}\leq  
A_0 + C d^\frac{a}{2}\lambda_V^{-\frac{a}{2}}\,.
\end{equation*}
We complete the arguments the same way as in Step 1.
\end{proof}

\begin{coro}\label{coro:moments}
Let $a\geq 0$, $\vr_0\in\cP_2(\R^d)\cap\cP_a(\R^d)$, $V$ be $\lambda_V$-convex function with $\lambda_V>0$  that attains its minimum at $x^*=0$, and for $k=1,\dots,n$ let $\vrk,\vr_{(k-1)+\frac{1}{2}}$ be as in Algorithm~\ref{alg:dgf}. Then there exists constant $C_{a}=C_{a}(a,\vr_0,x^*,\lambda_V)>0$ such that for all  $0<\tau<1/\lambda_V$ we have  
\[
\sup_k \left(\vrk(|\cdot|^a)+\vr_{(k-1)+\frac{1}{2}}(|\cdot|^a)\right)\leq C_{a}d^{\frac{a}{2}}\lambda_V^{-\frac{a}{2}} \,.
\]
Moreover, $C_{a}\ll +\infty$ when $\lambda_V \to 0$.

\end{coro}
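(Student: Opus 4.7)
The plan is to reduce the corollary directly to Theorem~\ref{theo:moments} together with the contractivity of the proximal operator already used in its proof. By construction of Algorithm~\ref{alg:dgf}, $\vr_k=\Law(X_k)$ and $\vr_{(k-1)+\frac{1}{2}}=\Law(\prox_V^\tau(X_{k-1}))$, so
\[
\vr_k(|\cdot|^a)=\Ex|X_k|^a,\qquad \vr_{(k-1)+\frac{1}{2}}(|\cdot|^a)=\Ex\bigl|\prox_V^\tau(X_{k-1})\bigr|^a.
\]
The bound on $\sup_k \Ex|X_k|^a$ is exactly the conclusion of Theorem~\ref{theo:moments}, which immediately handles the $\vr_k$ term with the asserted constant (and with the desired behaviour $C_a\ll +\infty$ as $\lambda_V\to 0$).

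For the half-step contribution, I would first observe that $\prox_V^\tau(0)=0$: since $V\in C^1$ attains its minimum at $x^*=0$ we have $\nabla V(0)=0$, and therefore $y=0$ solves the first-order condition $\nabla V(y)+y/\tau=0$ of the strictly convex minimization defining the proximal operator. Then the contractivity estimate of Corollary~\ref{cor:prox-contracts}, exactly as invoked in Steps~1 and~3 of the proof of Theorem~\ref{theo:moments}, gives
\[
\bigl|\prox_V^\tau(x)\bigr|^a=\bigl|\prox_V^\tau(x)-\prox_V^\tau(0)\bigr|^a\leq \Bigl(\tfrac{1}{1+\tau\lambda_V}\Bigr)^{a/2}|x|^a\leq |x|^a.
\]
Taking expectations and invoking Theorem~\ref{theo:moments} at index $k-1$ yields $\vr_{(k-1)+\frac{1}{2}}(|\cdot|^a)\leq C_a d^{a/2}\lambda_V^{-a/2}$ with the same constant, and summing the two bounds (absorbing a harmless factor of $2$ into $C_a$) closes the argument.

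I do not anticipate a real obstacle: the corollary is essentially a bookkeeping statement on top of Theorem~\ref{theo:moments}, the only non-routine input being the identity $\prox_V^\tau(0)=0$, which itself is immediate from the optimality condition. The dependence of the constant on $\lambda_V$, and in particular the non-degeneration as $\lambda_V\to 0$, is inherited verbatim from Theorem~\ref{theo:moments}.
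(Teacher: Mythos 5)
Your proof is correct, and it takes a genuinely different (and cleaner) route for the half-step term than the paper does. You work \emph{forward} from $X_{k-1}$: since $V$ attains its minimum at $0$ we have $\prox_V^\tau(0)=0$, and Corollary~\ref{cor:prox-contracts} gives $|\prox_V^\tau(x)|^a\leq\bigl(\tfrac{1}{1+\tau\lambda_V}\bigr)^{a/2}|x|^a\leq|x|^a$ pointwise, so $\Ex|X_{(k-1)+\frac12}|^a\leq\Ex|X_{k-1}|^a$ and Theorem~\ref{theo:moments} applies directly with no case analysis in $a$. The paper instead works \emph{backward}: it writes $X_{(k-1)+\frac12}=X_k-Z_k$, expands or bounds $|X_k-Z_k|^a$, and separately treats $a=2$, $a\in[0,2)$ via Jensen, and $a>2$ via the inequality $|x-z|^a\leq 2^{a-1}(|x|^a+|z|^a)$ together with Lemma~\ref{lemma:ath_mom_bounds} for $\Ex|Z_k|^a$. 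Your route is shorter, dispenses with the three-way case split, and in fact yields a slightly better constant (the paper's $a>2$ branch incurs a factor $2^{a-1}(C_a+\widetilde C_a)$, yours just $2C_a$). The only small caveat: invoking Theorem~\ref{theo:moments} at index $k-1$ for $k=1$ requires the moment bound to also cover $X_0$, i.e., that $\vr_0(|\cdot|^a)$ is absorbed into $C_a$; this is implicit in the proof of Theorem~\ref{theo:moments} (the iteration there keeps the $A_0$ term), so this is fine, but it is worth stating. Both approaches rest on exactly the same inputs (Theorem~\ref{theo:moments}, Corollary~\ref{cor:prox-contracts} and/or Lemma~\ref{lemma:ath_mom_bounds}), so the difference is one of bookkeeping rather than substance, but your bookkeeping is tidier.
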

\begin{proof}
The bound for $\vrk$ is a direct consequence of Theorem~\ref{theo:moments}. In the case of $\vr_{(k-1)+\frac{1}{2}}$, we note that by Theorem~\ref{theo:moments} and Lemma \ref{lemma:ath_mom_bounds} (with $\widetilde{C}_a$),
\[
\Ex|X_{(k-1)+\frac{1}{2}}|^2=\Ex|X_{k}|^2+\Ex|Z_{k}|^{2}-2\Ex[X_k\cdot Z_k]\leq (2+\widetilde{C}_{a})\lambda_V^{-1}d\,.
\]
For $a\in[0,2)$, we use the Jensen inequality to get
$$
\Ex|X_{(k-1)+\frac{1}{2}}|^a\leq \Big((2+\widetilde{C}_{a})\lambda_V^{-1}d\Big)^{\frac{a}{2}}\,.
$$
Finally, for $a>2$, an application of Theorem~\ref{theo:moments}, Lemma \ref{lemma:ath_mom_bounds}, and $\tau<1/\lambda_V$ then yields
\[
\Ex|X_{(k-1)+\frac{1}{2}}|^a\leq 2^{a-1}\Ex|X_{k}|^a+2^{a-1}\Ex|Z_{k}|^{a}\leq 2^{a-1}(C_{a}+\widetilde{C}_a)d^{\frac{a}{2}}\lambda_V^{-\frac{a}{2}}\,.\qedhere
\]
\end{proof}

Let us now establish the properties of $\cF$ that play a key role in the proof of the convergence of Algorithm~\ref{alg:dgf}. This lemma is inspired by the methods of~\cite{DMM}.

\begin{lem}\label{lemma:dmm3}
Let the function $V: \R^d \to \R$, $d\geq 1$, satisfies assumption \ref{V} with $\lambda_V\geq 0$, $L_{q_V}>0$, and $q_V\geq 1$. Suppose $\cFE$ is given by \eqref{cFE}, $\cFV$ by \eqref{cFV}, and $\cF$ by \eqref{cF} (with $W\equiv0$). Assume further that  $\muinfty$ is a minimizer of $\cF$, $\tau>0$, $\vr_0 \in\cP_2(\R^d)$, and for $k=0,\ldots,n-1$ we have $\vr_{k+\frac{1}{2}} = (\prox^\tau_{V})_{\#} \vr_k $, $ \vr_{k+1} = \vr_{k+\frac{1}{2}} * g_{ \tau}$.  Then: 
\begin{enumerate}[{(i)}]
    \item Along generalized geodesics on $\cP_2 (\R^d)$ we have that $\cFE$ is $0$-convex, while $\cFV$  and $\cF$   are $\lambda_V$-convex. 
 
    \item For any $\nu\in\cP_2(\R^d)\cap\cP_{q_V-1}(\R^d)$, we have 
    \[ 
    2\tau\big(\cFV[\nu*g_{\tau}]-\cFV[\nu]\big)\leq 2^{4q_V-2}L_{q_V}\left(\left(1 + \nu(|\cdot|^{q_V-1})\right)2d\tau^2+C_{q_V+1}d^{\frac{q_V+1}{2}}\tau^{\frac{q_V+3}{2}}\right).
    \]
    \item For any $\nu\in \cP_2(\R^d)$ and $k\in\N$, we have  
    \[2 \tau \big( \cFV [\vrkp] - \cFV [\nu]  \big) \leq W_2^2 (\vrk, \nu) - W_2^2 (\vrkp, \vrk) - (1+\tau \lambda_V )W_2^2 (\vrkp, \nu)\,,\]
    \item For any $ \nu \in \cP_2 (\R^d)$ and $k\in\N$, we have 
    $$2\tau \big( \cFE [\vrki] - \cFE [\nu] \big) \leq W_2^2(\vrkp, \nu) - W_2^2 (\vrki, \nu)\,.$$
    \item For any $ \nu \in \cP_2 (\R^d)$, any $k\in\N$, and under the restriction $0<\tau<1/\lambda_V$, we have
    \begin{flalign*}
        2\tau \big( \cF [\vrki] - \cF [\nu]\big)  &\leq  W_2^2 (\vrk, \nu) - \lambda_V \tau W_2^2(\vrkp, \nu)- W_2^2 (\vrkp, \vrk) - W_2^2(\vrki, \nu)\\ &\quad+ 2^{4q_V-1} L_{q_V}(d+C_{q_V-1}d^{\frac{q_V+1}{2}}\lambda_V^{-\frac{q_V-1}{2}})\tau^2 +
    o(\tau^2) \, .
    \end{flalign*} 
    \item For any $k\in\N$ and under the restriction $0<\tau<1/\lambda_V$, we have 
    \begin{flalign*}W_2^2(\vrki, \muinfty) &\leq   W_2^2 (\vrk, \muinfty) - \lambda_V \tau W_2^2(\vrkp, \muinfty) - W_2^2 (\vrkp, \vrk) \\ &\quad +  2^{4q_V-1} L_{q_V}(d+C_{q_V-1}d^{\frac{q_V+1}{2}}\lambda_V^{-\frac{q_V-1}{2}})\tau^2 +
    o(\tau^2) \, .
    \end{flalign*} 
\end{enumerate} 
\end{lem}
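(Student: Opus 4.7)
The plan is to obtain item (vi) as an immediate corollary of item (v). Specifically, I would apply the inequality of part (v) with the test measure $\nu = \muinfty$. Since $\muinfty$ is assumed to be a minimizer of $\cF$, in particular $\cF[\muinfty] < +\infty$, so $\muinfty \in \mathrm{dom}\,\cF$ and the inequality of (v) is applicable. The quantitative error term on the right-hand side of (v) is independent of $\nu$, so it passes through unchanged.

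Next, I would exploit the minimality of $\muinfty$: by definition $\cF[\vrki] \geq \cF[\muinfty]$, hence the left-hand side of (v), namely $2\tau\big(\cF[\vrki] - \cF[\muinfty]\big)$, is non-negative. Dropping this non-negative quantity (replacing it by zero) on the left and moving the $-W_2^2(\vrki,\muinfty)$ term on the right to the left produces
\[
W_2^2(\vrki, \muinfty) \leq W_2^2(\vrk, \muinfty) - \lambda_V \tau W_2^2(\vrkp, \muinfty) - W_2^2(\vrkp, \vrk) + 2^{4q_V-1} L_{q_V}\bigl(d + C_{q_V-1} d^{\tfrac{q_V+1}{2}} \lambda_V^{-\tfrac{q_V-1}{2}}\bigr)\tau^2 + o(\tau^2)\,,
\]
which is exactly the inequality claimed in (vi).

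I do not expect any genuine obstacle: the whole statement is a one-line consequence of (v) together with the variational characterisation of $\muinfty$. The only points to verify are that $\muinfty$ belongs to $\mathrm{dom}\,\cF$ (which follows at once from being a minimizer) and that the restriction $0<\tau<1/\lambda_V$ needed for (v) is the same as the one imposed here, so no extra smallness of $\tau$ has to be introduced.
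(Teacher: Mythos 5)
Your derivation of (vi) from (v) is correct and coincides with the paper's own argument, which likewise takes $\nu=\muinfty$ in (v), invokes the minimality of $\muinfty$ to drop the non-negative left-hand side $2\tau\big(\cF[\vrki]-\cF[\muinfty]\big)$, and rearranges. Nothing to add.
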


\begin{proof} In order to motivate {\it (i)} we observe that \cite[Proposition~9.3.2]{Ambrosio-Gigli-Savare} implies
\begin{equation}\label{eq:Fconv1}
\cFV[\mu_\theta^{2\rightarrow 3}]  \leq \theta \cFV [\mu^2]+ (1-\theta)\cFV[\mu^3] - \tfrac{\lambda_V}{2} \theta (1-\theta) W_{\boldsymbol{\mu}}^2 (\mu^2, \mu^3)\,,
\end{equation}
while by \cite[Proposition~9.3.9]{Ambrosio-Gigli-Savare} we have
\begin{equation}\label{eq:Fconv2}
\cFE[\mu_\theta^{2\rightarrow 3}]  \leq \theta \cFE [\mu^2]+ (1-\theta)\cFE[\mu^3] \,.
\end{equation}
Summing \eqref{eq:Fconv1} and \eqref{eq:Fconv2} we get the desired $\lambda_V$-convexity of $\cF$.\\
As for {\it (ii)} we observe that by Lemma~\ref{lem:grad-f-q-growth} applied to $\nabla V$ (and then that $\nabla V(x)\cdot y$ is odd in $y$) we have 
    \begin{align*}
    \cFV[\nu*g_{\tau }]&-\cFV[\nu] \\
    &=
    (4\pi \tau)^{-d/2}\int\int \left(V(x+y)-V(x)\right)\exp(-\tfrac{|y|^2}{4\tau})\dy \,\mathrm{d}\nu(x)\\
    &\leq 2^{2q_V-2} L_{q_V} (4\pi \tau)^{-d/2}\int\int (1+|x+y|^{q_V-1}+|{x}|^{q_V-1})|y|^2\exp(-\tfrac{|y|^2}{4\tau})\dy \,\mathrm{d}\nu(x) \\
    &\leq 2^{2q_V-2} L_{q_V} (4\pi \tau)^{-d/2} \times \\ 
    &\quad\times\int\int \left(1+ \max\{2,1+2^{q_V-2}\}|x|^{q_V-1} + \max\{1,2^{q_V-2}\}|y|^{q_V-1}\right)|y|^2\exp(-\tfrac{|y|^2}{4\tau})\dy \,\mathrm{d}\nu(x) \\
    &\leq 2^{2q_V-2} L_{q_V} \left(2\tau d + \max\{2,1+2^{q_V-2}\}\nu(|\cdot|^{q_V-1})2\tau d+\max\{1,2^{q_V-2}\}C_{q_V+1}d^{\frac{q_V+1}{2}}\tau^{\frac{q_V+1}{2}}\right)\\
    &\leq \max\{2\cdot2^{2(q_V-1)},2^{2(q_V-1)}+2^{4(q_V-1)}\} L_{q_V} \left(2\tau d + \nu(|\cdot|^{q_V-1})2\tau d+C_{q_V+1}d^{\frac{q_V+1}{2}}\tau^{\frac{q_V+1}{2}}\right)\\
    &\leq 2^{4q_V-3}L_{q_V} \left(\left(1 + \nu(|\cdot|^{q_V-1})\right)2d\tau+C_{q_V+1}d^{\frac{q_V+1}{2}}\tau^{\frac{q_V+1}{2}}\right).
    \end{align*}
Note that the moments of the Gaussian were calculated using Lemma~\ref{lemma:ath_mom_bounds} (and Remark \ref{rem:ath_mom_bounds}).  
\medskip

Let us justify {\it (iii)} in a similar way as \cite[Lemmas~4 and~29]{DMM}. We aim to bound $V(\prox_V^\tau (x)) - V(y)$ using Lemma \ref{lem:prox-contracts-1}. Indeed, it follows that

\begin{equation*}
2 \tau \Big( V(\prox_V^\tau (x)) - V(y) \Big) \leq \lvert x - y \rvert^2  -  \lvert x-\prox_V^\tau (x) \rvert^2 - (1+\tau \lambda_V) \lvert \proxv (x)-y \rvert^2 \, .
\end{equation*}
Now, let $\boldsymbol{\gamma} \in \Gamma_o (\vr_k, \nu)$ be the optimal coupling between the probability measures $\vr_k$ and $\nu$. Taking the integral $\int \,\mathrm{d} \boldsymbol{\gamma}$ and considering the facts
\begin{equation*}
W_2^2 (\vrkp, \nu) \leq \mathbb{E} [\lvert \proxv(X) - Y \rvert^2] \quad \mbox{and} \quad W_2^2 (\vrk, \vrkp) \leq \mathbb{E} [\lvert X - \proxv(X) \rvert^2]
\end{equation*}
we complete the proof of {\it (iii)}.\\
Note that {\it (iv)} results from \cite[Lemma~5]{DMM}.\\
In order to show {\it (v)} we will employ {\it (ii)} for $\nu=\vrkp$ (and then Corollary~\ref{coro:moments}), {\it (iii)}, and {\it (iv)} after observing that 
\begin{flalign*}
    \cF[\vrki]-\cF[\nu]& = 
    \Big( \cFV[\vrki]-\cFV[\vrkp]\Big)+\Big(\cFV[\vrkp]-\cFV[\nu]\Big)+\Big(\cFE[\vrki]-\cFE[\nu]\Big)\,. 
\end{flalign*}
Finally, we pick $\nu=\muinfty$, which is a minimizer of $\cF$, to get {\it (vi)} directly from {\it (v)}.
\end{proof}

We are in the position to prove Theorem \ref{thm:alg_bound-intro}. Recall that, as explained in Section \ref{sec:prelim}, the process $Y_t$ obtained from the nonlinear SDE \eqref{eq:mckean_vlasov:model} has a density $\mu_t$ which solves the nonlinear PDE \eqref{eq:model}.

\begin{proof}[Proof of Theorem \ref{thm:alg_bound-intro}]
We start with showing that there exists $C=C(q_V,\mu_0)>0$ such that for $k=1,\ldots,n$, we have 

    \begin{equation}        
\label{eq:prop-small-T} 
    W_2^2 (\vr_k, \mu(t)) \leq 
    6\tau \left( \cF [\mu_0] - \cF [\muinfty] + k C  \Big(d+C_{q_V-1}d^{\frac{q_V+1}{2}}\lambda_V^{-\frac{q_V-1}{2}}\Big)\tau +
    o(\tau)\right)\, .
    \end{equation}

    We will employ Theorem 4.1 in \cite{bernton}. It reads
    \begin{equation*}
    W_2^2(\vr_k , \mu(t)) \leq 
     6\tau\big(\cF [\mu_0] - \cF [\muinfty]+ \Delta_{\tau}^{k}\big) \, ,
    \end{equation*}
    whenever $\tau\Delta_{\tau}^k\to0$ as $\tau\to0$, where 
    $$
    \Delta_{\tau}^{k} := \sum_{i = 0}^{k-1} \cFV[\vr_{i+1}] - \cFV[\vr_{i+\frac{1}{2}}]= \sum_{i = 0}^{k-1} \cFV[\vr_{i+\frac{1}{2}}* g_{\tau}]-\cFV[\vr_{i+\frac 12}]\,. 
    $$
    By Lemma~\ref{lemma:dmm3}{\it (ii)} with $\nu=\varrho_{i+\frac12}$ and then Corollary~\ref{coro:moments}, we have
    \begin{equation*}
     \Delta_{\tau}^k \leq k\left(2^{4q_V-1} L_{q_V}\Big(d+C_{q_V-1}d^{\frac{q_V+1}{2}}\lambda_V^{-\frac{q_V-1}{2}}\Big)\tau +
    o(\tau) \right),
    \end{equation*}
    and we immediately obtain \eqref{eq:prop-small-T}.

Let us now show that for $k=2,\ldots,n$, the following inequality holds true
\begin{equation}
\label{eq:prop-large-T} 
W_2^2 (\vrk, \mu(t))  \leq 4 \mathrm{e}^{-k\tau\lambda_V/2} W_2^2(\mu_0,\muinfty) +  C \Big(d\lambda_V^{-1}+C_{q_V-1}d^{\frac{q_V+1}{2}}\lambda_V^{-\frac{q_V-1}{2}-1}\Big)\tau +o(\tau)\,.
\end{equation} 
By the triangle inequality and the $\lambda_V$-contraction for \eqref{eq:model} due to \cite[Theorem 11.2.1]{Ambrosio-Gigli-Savare}, we get 
\begin{equation}\label{proof-large-T:1st-split}
W_2^2 (\vrk, \mu(t)) \leq 2 W_2^2(\vrk, \muinfty)+2W_2^2(\muinfty, \mu(t))\leq  2 W_2^2(\vrk, \muinfty)+2  \mathrm{e}^{-2\lambda_V t} W_2^2(\mu_0,\muinfty)  \,.
\end{equation}
In order to estimate the last term on the right-hand side we will prove that the following inequality holds
\begin{equation}\label{proof-large-T:from-vrk-to-pi}
W_2^2(\vr_k, \muinfty) \leq \mathrm{e}^{-k\tau\lambda_V/2} W_2^2 (\mu_0, \muinfty) +  C \Big(d\lambda_V^{-1}+C_{q_V-1}d^{\frac{q_V+1}{2}}\lambda_V^{-\frac{q_V-1}{2}-1}\Big)\tau+o(\tau)\, .
\end{equation}
Again, by the triangle inequality,
\begin{equation*}
\tfrac{1}{2} W_2^2(\vrk, \muinfty) \leq W_2^2 (\vrk, \vrkp) +  W_2^2(\vrkp, \muinfty)\,,
\end{equation*}
Lemma~\ref{lemma:dmm3}{\it (vi)}, and setting 
\[
\mathcal{K}:= 2^{4q_V-1} L_{q_V}\Big(d+C_{q_V-1}d^{\frac{q_V+1}{2}}\lambda_V^{-\frac{q_V-1}{2}}\Big)\tau^2+o(\tau^2),
\] 
we get 
\begin{flalign*}\notag
W_2^2(\vr_{k}, \muinfty) &\leq  W_2^2 (\vr_{k-1}, \muinfty) - \lambda_V \tau \big( W_2^2 (\vr_{k-1+\frac{1}{2}}, \vr_{k-1})+W_2^2(\vr_{k-1+\frac{1}{2}}, \muinfty)\big)  + \mathcal{K}  
  \\ 
    &\leq W_2^2 (\vr_{k-1}, \muinfty) - \tfrac{\lambda_V \tau}{2}  W_2^2(\vr_{k-1}, \muinfty)   + \mathcal{K}
 = \left(1 - \tfrac{\lambda_V \tau}{2}\right)  W_2^2(\vr_{k-1}, \muinfty) +  
\mathcal{K}\, . 
\end{flalign*}
By iteration of the above estimate

 we obtain 
\begin{equation*}
W_2^2(\vr_k, \muinfty) \leq \left( 1 - \tfrac{\lambda_V \tau}{2} \right)^k W_2^2 (\mu_0, \muinfty) + \mathcal{K} \sum_{i=0}^{k-1} \left( 1 - \tfrac{\lambda_V \tau}{2} \right)^i\leq \mathrm{e}^{-k\tau\lambda_V/2} W_2^2 (\mu_0, \muinfty) + \mathcal{K} \tfrac{2}{\tau \lambda_V} \, ,
\end{equation*}
which implies~\eqref{proof-large-T:from-vrk-to-pi}. Inserting it into \eqref{proof-large-T:1st-split} gives
$$
W_2^2 (\vrk, \mu(t)) \leq \Big(2  \mathrm{e}^{-2\lambda_V t}+2  \mathrm{e}^{-k\tau\lambda_V/2}\Big) W_2^2(\mu_0,\muinfty) + C \Big(d\lambda_V^{-1}+C_{q_V-1}d^{\frac{q_V+1}{2}}\lambda_V^{-\frac{q_V-1}{2}-1}\Big)\tau+o(\tau)  \,.
$$
In order to get \eqref{eq:prop-large-T}, we need the bound $\mathrm{e}^{-2\lambda_V t}\leq \mathrm{e}^{-k\tau\lambda_V/2}$, which we obtain for $k>1$ since $4t\geq k\tau$.

To continue, we choose $\tau\leq \tau_0$ such that the $o(\tau)$-terms in \eqref{eq:prop-small-T} and \eqref{eq:prop-large-T} can be bounded by the second to last terms. Finally, we do elementary steps to obtain the desired estimate.
\end{proof}

Summarizing all the results of this section we can prove our main result concerning Algorithm~\ref{alg:particle}.
\begin{proof}[Proof of Theorem \ref{thm:alg_bound-nonlocal}]
We use the triangle inequality
$$
W_2^2(\vr_k^N,\mu(t))\leq 2 W_2^2(\vr_k^N,\mu^N(t))+2W_2^2(\mu^N(t),\mu(t)).
$$
The last term can be estimated directly by Theorem \ref{theo:chaos}, so we focus on the first term.

We see that in Algorithm \ref{alg:particle} function $\boldsymbol{\Psi}$ plays the same role as $V$ has in Algorithm \ref{alg:dgf}. Moreover, by Lemmas~\ref{lem:Psi1} and~\ref{lem:Psi2}, $\boldsymbol{\Psi}$ is $\lambda_V$-convex and $\nabla\boldsymbol{\Psi}$ is of $q_{\boldsymbol{\Psi}}=\max\{q_V,q_W\}=\widetilde{q}_1$ Lipschitz growth. Hence, we can use Theorem \ref{thm:alg_bound-intro} to estimate $W_2^2(\boldsymbol{\vr}_k,\boldsymbol{\mu}(t))$, and then that $W_2^2(\vr_k^N,\mu^N(t))\leq W_2^2(\boldsymbol{\vr}_k,\boldsymbol{\mu}(t))$ to obtain the result.
\end{proof}

\begin{rem}[Relaxing $\lambda$-convexity]\rm \label{rem:relax-lambda-conv-2} Note that $\boldsymbol{\Psi}$ can be $\lambda_{\boldsymbol{\Psi}}$-convex for $\lambda_{\boldsymbol{\Psi}}>0$ even if $\lambda_W<0$, see Lemma~\ref{lem:lambda-saving}.
Then, due to the Jensen inequality, Theorem~\ref{thm:alg_bound-intro} can be formulated for convergence in $W_1$. By combining such modified result with \cite[Corollary~3]{durmus_propagation} one can obtain the counterpart of Theorem~\ref{thm:alg_bound-nonlocal} for some $\lambda_W<0$, cf. Remark~\ref{rem:relax-lambda-conv}.
\end{rem}

\section{Approximated Proximal Split Discretized Gradient Flow}\label{sec:5}
Analytical computation of $\proxv$ operator is rarely possible. However, effective approximation algorithms for $\proxv$ are available. Our algorithms are stable in the sense that an error coming from such approximation can be controlled without a loss of convergence rate. We will show it for Algorithm~\ref{alg:dgf}, see Theorem~\ref{theo:linear-problem-stability}. Nonetheless, the same holds true for  Algorithm~\ref{alg:particle} by the same arguments. Additionally, we show that  $\proxv$  can be accurately approximated by the gradient descent algorithm, i.e., Algorithm~\ref{alg:gdwb}. The rate of convergence of this algorithm is provided in Corollary~\ref{coro:grad-desc}. Let us consider a modified (perturbed) Algorithm~\ref{alg:dgf}, namely Algorithm~\ref{alg:dgf-pert}.
\begin{algorithm}
\caption{Approximated Proximal Split Discretized Gradient Flow}\label{alg:dgf-pert}
\begin{algorithmic}
\State Sample initial distribution $\widehat X_0 \sim \vr_0=\mu_0$
\For{$k=0, \ldots , n-1$}
\State  $\begin{array}{ll}
(1) \quad \Xpkph := \prox^\tau_V (\Xpkh)+\xi_k\,;\qquad&\text{$\xi_k$ -- perturbation such that $\sup_k|\xi_k|<\ve\,$,} 
\\
(2) \quad \Xpkoh := \Xpkph+ Z_{k+1}\,; & Z_{k+1} \sim g_{ \tau}\,, 
\end{array}$
\EndFor
\end{algorithmic}
\end{algorithm}

The convergence rate for Algorithm~\ref{alg:dgf-pert} are given in the following counterpart of Theorem~\ref{thm:alg_bound-intro}.
\begin{theo} \label{theo:linear-problem-stability}
    Let $\muinfty$ be the unique minimizer of $\cF$ given by~\eqref{cF} (with $W\equiv0$) for $V$ that satisfies \ref{V} with $\lambda_V > 0$. Assume further that $\mu$ is a gradient flow solution to~\eqref{eq:model} with initial datum $\mu_0\in\cP_2(\R^d)\cap \cP_{q_V-1}(\R^d)$. For every $k=1,\ldots,n\,$ let $\Xpkh$ be an output of Algorithm~\ref{alg:dgf-pert}  and $\Xpkh\sim\widehat\vr_k$.    
    Then for $0<\tau < 1/\lambda_V$ and $t\in(\tau(k-1), \tau k]$ the following inequality holds true
\begin{equation}\label{eq:theo-nonexact}
W_2^2 (\widehat{\vr}_k, \mu(t)) \leq 2 W_2^2(\vrk, \mu(t)) + 8\big(\tfrac{\ve}{\lambda_V \tau} \big)^2\,,
\end{equation}
where $W_2^2(\vrk, \vr(t))$ can be estimated as in Theorem \ref{thm:alg_bound-intro}. 

When in addition $\ve = \mathcal{O}(\tau^2)$, then there exists constant $C$ such that
\begin{equation} \label{eq:theo-nonexact2}
W_2(\widehat{\vr}_k, \mu(t))\leq C\tau\,.
\end{equation}
\end{theo}
\begin{proof}
The triangle inequality gives
\begin{equation*}
W_2^2 (\widehat{\vr}_k, \mu(t)) \leq 2 W_2^2(\vrk, \mu(t)) + 2 W_2^2( \vrk,\widehat{\vr}_k) \, ,
\end{equation*} 
so let us concentrate on the second term. The gradient flows of  $\cFE$ and $\cFV$ are respectively $0$- and $\lambda_V$-contractions by \cite[Theorem 11.2.1]{Ambrosio-Gigli-Savare}, so for $k=0,\dots,n-1$, we have
\begin{equation*}
W_2 (\vrki, \widehat{\vr}_{k+1}) \leq W_2 (\vrkp, \widehat{\vr}_{k+\frac{1}{2}}) \,
\end{equation*}
and
\[
W_2(\vrkp,({\proxv})_\# \widehat \vr_k)\leq \tfrac{1}{1+\lambda_V \tau}W_2(\vrk,\widehat\vr_k)\,.
\]
Let $\ve>0$ be a bound for perturbation $\xi_k$ from Algorithm~\ref{alg:dgf-pert}. To complete the proof it suffices to  show that 
\begin{equation*}
W_2 (\vrkp, \widehat{\vr}_{k+\frac{1}{2}}) \leq \tfrac{2\ve}{\lambda_V \tau} \, .
\end{equation*} 
We show it by iterating estimates. Due to Lemma~\ref{lem:wass-pert} we have that
$$
W_2 (({\proxv})_\# \widehat \vr_k,\widehat{\vr}_{k+\frac{1}{2}} )\leq \ve\,.
$$
Consequently, for $k=0,\dots,n-1$, we get
\begin{align*}
W_2 (\vrki, \widehat{\vr}_{k+1}) \leq W_2 (\vr_{k+\frac{1}{2}}, \widehat{\vr}_{k+\frac{1}{2}}) &\leq W_2 (\vrkp,({\proxv})_\# \widehat \vr_k)+ W_2 (({\proxv})_\# \widehat \vr_k,\widehat{\vr}_{k+\frac{1}{2}} ) \\
&\leq \tfrac{1}{1+\lambda_V \tau} W_2(\vrk,\widehat\vr_k) +\ve \, .
\end{align*}
Iterating, keeping in mind that $\vr_0=\widehat{\vr}_0$, then yields
\[
W_2 (\vr_{k+1}, \widehat{\vr}_{k+1})\leq \ve \sum_{i = 0}^{k-1} \big( \tfrac{1}{1+\lambda_V \tau} \big)^i  \leq \ve 
\sum_{i=0}^{+\infty} \big( \tfrac{1}{1+\lambda_V\tau} \big) ^i = \ve\tfrac{1+\lambda_V \tau}{\lambda_V \tau} \leq \tfrac{2\ve}{\lambda_V \tau}\,. 
\]
That proves \eqref{eq:theo-nonexact}. Combining \eqref{eq:theo-nonexact} with Theorem~\ref{thm:alg_bound-nonlocal}, we get the convergence rate  \eqref{eq:theo-nonexact2}. 
\end{proof}

We will explain how the perturbed Algorithm~\ref{alg:dgf-pert} can be realized via the standard \underline{gradient descent algo\-rithm}. Let
\begin{equation}\label{Px}
    P_x(y):=V(y) + \tfrac{1}{2\tau} \lvert y-x \rvert^2\,.
\end{equation}  
  For fixed $k$, $\Xpkh$ being an output of Algorithm~\ref{alg:dgf-pert} such that  and $\Xpkh\sim\widehat\vr_k$, and $\gamma$ being sufficiently small, we will we approximate $ \proxv (\Xpkh)$ by $(\eta^k_{\ell})_{\ell\in\N}$ given by the iterative procedure from Algorithm~\ref{alg:gdwb}.

\begin{algorithm}
\caption{Gradient Descent With Backtracking}\label{alg:gdwb}
\begin{algorithmic}
\State Set $\ell=0$, $\varsigma\in(0,1)$, $\gamma>0$, $\eta_{0}^{k}=\Xpkh\,$. 
\While{$\ell< n-1$}
\State  $\eta^k_{\ell+1}:=\eta^k_{\ell}-\gamma\nabla P_{\Xpkh}(\eta^k_{\ell})$;
\If{$P_x(\eta^k_{\ell+1})\leq V(\eta^k_0)$ \textbf{ and } $P_x(\eta^k_{\ell+1})\leq P_x(\eta^k_{\ell})+\nabla P_x(\eta^k_{\ell})\cdot(\eta^k_{\ell+1}-\eta^k_{\ell})+\tfrac 1{2\gamma}  \vert \eta^k_{\ell+1}-\eta^k_{\ell} \vert^2\,,$\\
}
\State $\ell:=\ell+1$;
\Else
\State $\gamma:=\varsigma\gamma$.
\EndIf
\EndWhile
\end{algorithmic}
\end{algorithm}
  
We make use of the following fact on the rate of convergence of the gradient descent algorithm, which  is well-known in the global case. We show its local version for the sake of completeness.

\begin{lem}\label{lem:for-grad-desc}
Let $x\in\R^d$, $\tau>0$, $V$ satisfy \ref{V} with $\lambda_V > 0$, while $P_x$ be given by~\eqref{Px}. Assume further that $\eta_0=x$ and for $\ell=0,1,\ldots$,
\begin{equation*}
   \eta_{\ell+1}:=\eta_{\ell}-\gamma\nabla P_{x}(\eta_{\ell})\,.
\end{equation*} 
Then there exists a positive constant $\gamma_0=\gamma_0(x,\tau,q_V)$, such that for $\gamma\in (0, \gamma_0]$ and for every $\ell\in\N$ it holds
    \begin{equation*}
        |\eta_\ell-\proxv (x)|^2\leq \left(1-\gamma({\lambda_V+\tfrac{1}{\tau}})\right)^\ell |x-\proxv (x)|^2\,.
    \end{equation*}
\end{lem}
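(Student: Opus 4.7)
The functional $P_x$ is the sum of the $\lambda_V$-convex $V$ and the $\tfrac{1}{\tau}$-strongly convex quadratic $\tfrac{1}{2\tau}|y-x|^2$, hence $\mu$-strongly convex with $\mu := \lambda_V + 1/\tau$, and its unique minimizer is $\eta^* := \proxv(x)$, which satisfies $\nabla P_x(\eta^*)=0$. If $\nabla P_x$ were globally $L$-Lipschitz, the claim would follow from the textbook gradient-descent contraction: for $\gamma \in (0,1/L]$ one has $|\eta_{\ell+1}-\eta^*|^2 \le (1-\gamma\mu)|\eta_\ell-\eta^*|^2$. The only subtlety is that assumption \ref{V} only provides power-growth (i.e.\ \emph{local}) Lipschitzianity of $\nabla V$, so the plan is to restrict the analysis to a fixed ball that is stable under the iteration.

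Concretely, set $R:=|x-\eta^*|$ and $B:=\overline{B}(\eta^*,R)$. Using \eqref{f-q-growth-Lip}, $\nabla V$ is Lipschitz on $B$ with some constant $L_B$ depending only on $q_V$, $L_{q_V}$, $|\eta^*|$ and $R$; consequently $\nabla P_x$ is Lipschitz on $B$ with constant $L:=L_B+1/\tau$. Note that $|\eta^*|$ and $R$ are themselves controlled by $x$, $\tau$, $q_V$: indeed, $\eta^*$ satisfies $\eta^*-x=-\tau\nabla V(\eta^*)$, and combining this with the minimizing property $P_x(\eta^*)\le P_x(x)=V(x)$ and the power-growth bound on $\nabla V$ yields an explicit estimate of $|\eta^*|+R$ of the form $C(x,\tau,q_V)$. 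I then set $\gamma_0:=1/L$, so that $\gamma_0=\gamma_0(x,\tau,q_V)$ as required.

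Finally, for $\gamma\in(0,\gamma_0]$ I proceed by induction on $\ell$ to establish simultaneously
\[
\eta_\ell\in B\qquad\text{and}\qquad |\eta_\ell-\eta^*|^2\le (1-\gamma\mu)^\ell R^2.
\]
The base case $\ell=0$ is immediate since $\eta_0=x$. For the inductive step, I expand
\[
|\eta_{\ell+1}-\eta^*|^2=|\eta_\ell-\eta^*|^2-2\gamma\langle \nabla P_x(\eta_\ell),\eta_\ell-\eta^*\rangle+\gamma^2|\nabla P_x(\eta_\ell)|^2,
\]
and combine the strong-convexity inequality $\langle\nabla P_x(\eta_\ell),\eta_\ell-\eta^*\rangle\ge\mu|\eta_\ell-\eta^*|^2$ with the co-coercivity inequality $\langle\nabla P_x(\eta_\ell),\eta_\ell-\eta^*\rangle\ge \tfrac{1}{L}|\nabla P_x(\eta_\ell)|^2$ (valid on $B$, where $P_x$ is convex and $L$-smooth) in the half-and-half form
\[
\langle\nabla P_x(\eta_\ell),\eta_\ell-\eta^*\rangle\ge \tfrac{\mu}{2}|\eta_\ell-\eta^*|^2+\tfrac{1}{2L}|\nabla P_x(\eta_\ell)|^2.
\]
Plugging this in gives $|\eta_{\ell+1}-\eta^*|^2\le(1-\gamma\mu)|\eta_\ell-\eta^*|^2-\gamma\bigl(\tfrac{1}{L}-\gamma\bigr)|\nabla P_x(\eta_\ell)|^2\le(1-\gamma\mu)|\eta_\ell-\eta^*|^2$ since $\gamma\le 1/L$. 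In particular $|\eta_{\ell+1}-\eta^*|\le R$, so $\eta_{\ell+1}\in B$ and the induction closes. The main (mild) obstacle is the self-consistency of requiring iterates to remain in the ball where smoothness holds; this is handled by the contraction itself, so bookkeeping $\gamma_0$ in terms of $R$ (and hence of $x$, $\tau$, $q_V$) suffices.
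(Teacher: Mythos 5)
Your route is essentially the same as the paper's: restrict to a bounded invariant set where $\nabla P_x$ becomes Lipschitz, then apply the classical strongly-convex gradient-descent contraction. The paper uses the sublevel set $K_x=\{y:P_x(y)\le P_x(x)\}$ (showing its invariance separately via Lemma~\ref{lem:grad-f-q-growth}), controls $\gamma|\nabla P_x(\eta_\ell)|^2$ by the suboptimality gap $2(P_x(\eta_\ell)-P_x(\proxv(x)))$, and combines this with $\lambda$-convexity of $P_x$; you instead take the ball $B=\overline B(\eta^*,|x-\eta^*|)$, prove invariance of $B$ through the contraction itself, and control $|\nabla P_x(\eta_\ell)|^2$ by co-coercivity. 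Both yield the same rate $1-\gamma(\lambda_V+1/\tau)$.

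There is, however, one genuine gap you should close. You invoke co-coercivity, $\langle\nabla P_x(\eta_\ell),\eta_\ell-\eta^*\rangle\ge\tfrac1L|\nabla P_x(\eta_\ell)|^2$, as ``valid on $B$, where $P_x$ is convex and $L$-smooth.'' But the standard proof of co-coercivity (via the Bregman-type inequality $f(v)\ge f(u)+\langle\nabla f(u),v-u\rangle+\tfrac1{2L}|\nabla f(v)-\nabla f(u)|^2$) passes through the auxiliary point $\eta_\ell-\tfrac1L\nabla P_x(\eta_\ell)$, which need not lie in $B$: since $|\nabla P_x(\eta_\ell)|$ can be as large as $LR$ with $R=|x-\eta^*|$, the auxiliary point may sit as far as distance $2R$ from $\eta^*$. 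So Lipschitzianity of $\nabla P_x$ on $B$ alone does not justify co-coercivity on $B$. The fix is easy — compute the local Lipschitz constant $L$ on the enlarged ball $\overline B(\eta^*,2R)$, which still depends only on $x,\tau,q_V$ (and $L_{q_V}$), and set $\gamma_0=1/L$; with this $L$ all auxiliary segments stay in $\overline B(\eta^*,2R)$, co-coercivity for iterates in $B$ becomes legitimate, and the rest of your induction goes through. As written, though, the co-coercivity step is not justified and this should be repaired before the argument is complete.
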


\begin{proof}
Computing $\proxv (x)$ requires minimizing $P_x(y)$ over $\R^d$, but its minimum is attained inside a~compact set
\begin{equation*}
    K_x:=\{y\in\R^d:\ P_x(y)\leq P_x(x)\}\,.
\end{equation*}
First we show that there exists $\gamma_1>0$ such that for all $\gamma\leq \gamma_1$ we have  that if $\eta_\ell\in K_x$ then $\eta_{\ell+1}\in K_x$.
By Lemma~\ref{lem:grad-f-q-growth} applied to $V$ and the equality
$
|z-x|^2=|y-x|^2+2(y-x)\cdot(z-y)+|z-y|^2,
$
we have
\begin{flalign*}
P_x(\eta_{\ell+1})=P_x\big(\eta_\ell-\gamma\nabla P_x(\eta_\ell)\big)
&\leq P_x(\eta_\ell)-\gamma|\nabla P_x(\eta_\ell)|^2\\
&\quad+\left[2^{2q_V-2}L_{q_V}\left(1+|\eta_\ell|^{q_V-1}+|\eta_\ell-\gamma\nabla P_x(\eta_\ell)|^{q_V-1}\right)+\tfrac{1}{2\tau}\right]\gamma^2|\nabla P_x(\eta_\ell)|^2\,.
\end{flalign*}
Since $\nabla P_x$ is bounded on bounded sets, the term in the bracket is bounded for $\eta_\ell\in K_x$. Consequently, we can choose $\gamma_1$ small enough such that for $\gamma\leq\gamma_1$ we have $\eta_{\ell+1}\in K_x$. We fix $\gamma_0$ such that 
\[
\tfrac{1}{\gamma_0}=2\left(2^{2q_V-2}L_{q_V}\sup_{z,y\in K_x}\left(1+|z|^{q_V-1}+|y|^{q_V-1}\right)+\tfrac{1}{2\tau}\right)\,
\]
and note that $0<2\gamma_0\leq\gamma_1$. By the proof of Lemma~\ref{lem:grad-f-q-growth} we deduce that for every $z,y\in K_x$ and $\gamma\leq\gamma_0$
we have
\begin{equation*}
    P_x(z)\leq P_x(y)+\nabla P_x(y)\cdot(z-y)+\tfrac 1{2\gamma}  \vert z-y \vert^2 \,.
\end{equation*}
Fix   $y\in K_x$. We show now that \begin{equation*}
    \gamma|\nabla P_x(y)|^2\leq 2(P_x(y)-P_x(\proxv(x)))\,.
\end{equation*}
Indeed, it results from the fact that since $\proxv(x)$ minimizes $P_x$, we have
\[P_x(\proxv(x))\leq P_x(y-\gamma \nabla P_x(y))\leq P_x(y)-\gamma|\nabla P_x(y)|^2+\tfrac{\gamma}{2}|\nabla P_x(y)|^2=P_x(y)-\tfrac{\gamma}{2}|\nabla P_x(y)|^2\,.\]
Notice that by its very definition $P_x$ is $\lambda$-convex for $\lambda=\lambda_V+\frac{1}{\tau}$, which means that
\[-2\nabla P_x(y)\cdot(y-\proxv(x))\leq -2(P_x(y)-P_x(\proxv(x)))-({\lambda_V+\tfrac{1}{\tau}})|y-\proxv(x)|^2\,.\]
Summing up, we obtain
\begin{flalign*}
    |\eta_{\ell+1}-\proxv(x)|^2&=
    |\eta_{\ell}-\proxv(x)-\gamma\nabla P_x(\eta_\ell)|^2\\
    &=
    |\eta_{\ell}-\proxv(x)|^2-2\gamma\nabla P_x(\eta_\ell)\cdot (\eta_\ell-\proxv(x))+\gamma^2|\nabla P_x(\eta_\ell)|^2\\
    &\leq 
    |\eta_{\ell}-\proxv(x)|^2- 2\gamma(P_x(y)-P_x(\proxv(x)))\\
    &\qquad-\gamma({\lambda_V+\tfrac{1}{\tau}})|y-\proxv(x)|^2+2\gamma(P_x(y)-P_x(\proxv(x)))\\
    &= \Big[1- \gamma({\lambda_V+\tfrac{1}{\tau}})\Big]
    |\eta_{\ell}-\proxv(x)|^2\,.
\end{flalign*}
We complete the proof by the iteration.
\end{proof}

We are in a position to establish the rate of convergence of Algorithm~\ref{alg:gdwb}.

\begin{coro} \label{coro:grad-desc} 
Suppose $\tau>0$ and $V$ satisfies \ref{V} with $\lambda_V > 0$. Let $\Xpkh$ be an  output of Algorithm~\ref{alg:dgf-pert} and $\eta^k_\ell$ and $\gamma$ be  output of Algorithm~\ref{alg:gdwb}. Then for 
     every $\ell\in\N$ it holds
    \begin{equation*}
        |\eta^k_\ell-\proxv (\Xpkh)|^2\leq \left(1-\gamma({\lambda_V+\tfrac{1}{\tau}})\right)^\ell |\Xpkh-\proxv (\Xpkh)|^2\,.
    \end{equation*}
\end{coro}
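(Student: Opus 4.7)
The plan is to deduce this corollary directly from Lemma~\ref{lem:for-grad-desc} by observing that Algorithm~\ref{alg:gdwb} is simply an adaptive realization of the deterministic gradient descent scheme analyzed there. Fix $k$ and condition on the value of $\Xpkh$, treating it as a fixed point $x\in\R^d$; since the inequality we want to prove is pathwise, this reduction is legitimate. The update rule $\eta^k_{\ell+1} = \eta^k_{\ell} - \gamma \nabla P_x(\eta^k_\ell)$ inside Algorithm~\ref{alg:gdwb} is identical to the iteration of Lemma~\ref{lem:for-grad-desc} applied to the objective $P_x$ defined in~\eqref{Px}, with initial point $\eta^k_0 = \Xpkh = x$. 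The only difference is that $\gamma$ is now chosen adaptively by the backtracking rule rather than fixed a priori in $(0,\gamma_0]$.

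The heart of the proof is therefore to verify that the $\gamma$ accepted by the backtracking loop satisfies the same hypotheses used in the proof of Lemma~\ref{lem:for-grad-desc}. Inspecting the two conditions enforced by the \textbf{If} clause of Algorithm~\ref{alg:gdwb}: the first, $P_x(\eta^k_{\ell+1}) \leq V(\eta^k_0)$, is equivalent to $\eta^k_{\ell+1} \in K_x$, because $\eta^k_0 = x$ gives $V(\eta^k_0) = P_x(\eta^k_0) = P_x(x)$ (the proximal term vanishes at $y=x$), so this condition propagates the invariance $\eta^k_\ell \in K_x$ by induction on $\ell$; the second condition is precisely the quadratic upper bound derived, via Lemma~\ref{lem:grad-f-q-growth}, midway through the proof of Lemma~\ref{lem:for-grad-desc}. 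These were the only two properties of the step size used to establish the contraction inequality there. Moreover, the proof of Lemma~\ref{lem:for-grad-desc} already shows that every $\gamma\in(0,\gamma_0]$ satisfies both conditions, so after finitely many multiplications by $\varsigma\in(0,1)$ the backtracking loop is guaranteed to terminate with an acceptable step size.

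Once such a $\gamma$ is in hand, the remainder of the argument in Lemma~\ref{lem:for-grad-desc} carries over verbatim: the $(\lambda_V + 1/\tau)$-convexity of $P_x$, combined with the descent inequality and the minimality of $\proxv(x)$, yields the one-step contraction
\[
|\eta^k_{\ell+1} - \proxv(x)|^2 \leq \left(1 - \gamma\bigl(\lambda_V + \tfrac{1}{\tau}\bigr)\right)|\eta^k_\ell - \proxv(x)|^2,
\]
and iterating from $\eta^k_0 = \Xpkh$ gives the claim. The only delicate point, such as it is, is the bookkeeping that matches the two backtracking acceptance criteria with the two inequalities used internally in the proof of Lemma~\ref{lem:for-grad-desc}; once that correspondence is made explicit, the corollary follows with essentially no further computation.
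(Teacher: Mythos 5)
Your argument is correct and follows essentially the same route as the paper: invoke Lemma~\ref{lem:for-grad-desc} to guarantee that backtracking terminates, then observe that the accepted step size produces the same contraction. Where you go slightly beyond the paper's very terse proof is in explicitly identifying that the two acceptance criteria of Algorithm~\ref{alg:gdwb} are precisely the only two $\gamma$-dependent ingredients (invariance of $K_x$ and the quadratic descent bound) used in the contraction argument of Lemma~\ref{lem:for-grad-desc}. This matters because the paper's assertion that the algorithm ``will end with $\gamma\in(0,\gamma_0)$'' is not literally guaranteed — if the criteria already hold for a larger initial $\gamma$, the loop never shrinks it below $\gamma_0$ — whereas your phrasing shows the conclusion holds for any accepted step size regardless of whether it is below the worst-case threshold. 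One minor point you could make explicit: since $\gamma$ is nonincreasing across iterations, each individual step's contraction factor $1-\gamma_\ell(\lambda_V+\tfrac{1}{\tau})$ is dominated by the factor built from the final output $\gamma$, so multiplying the one-step contractions still yields the stated bound with the algorithm's returned $\gamma$; but this is exactly the ``iterating'' step you gesture at, and it is sound.
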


\begin{proof}
    Due to Lemma~\ref{lem:for-grad-desc} there exists $\gamma_0$, such that for $\gamma<\gamma_0$  the condition \[P_x(\eta^k_{\ell+1})\leq V(\eta^k_0)\quad\textbf{ and }\quad P_x(\eta^k_{\ell+1})\leq P_x(\eta^k_{\ell})+\nabla P_x(\eta^k_{\ell})\cdot(\eta^k_{\ell+1}-\eta^k_{\ell})+\tfrac 1{2\gamma}  \vert \eta^k_{\ell+1}-\eta^k_{\ell} \vert^2\]
    holds true. In turn, Algorithm~\ref{alg:gdwb} will end with $\gamma\in (0,\gamma_0)$ and the rate of convergence results from Lemma~\ref{lem:for-grad-desc}. 
\end{proof}

\begin{rem}\rm The rate of convergence from Corollary~\ref{coro:grad-desc} can be estimated as follows\[1-\gamma(\lambda_V+\tfrac{1}{\tau})\leq 1-\varsigma\tfrac{\tau\lambda_V+1}{\tau c+1}\,\]
for some constant $c=c(V,x)>0$.
The estimate follows from the proofs of Lemma~\ref{lem:for-grad-desc} and Corollary~\ref{coro:grad-desc} we can infer that
    $\gamma_0=\frac{\tau}{c\tau+1}$ for some $c>0$. Furthermore, $\gamma$ obtained in Algorithm~\ref{alg:gdwb} satisfies $\gamma\geq \varsigma\gamma_0$.\\
     We notice that for all sufficiently small $\tau$ the convergence is arbitrarily fast as \[1-\varsigma\tfrac{\tau\lambda_V+1}{\tau c+1}\xrightarrow[\tau\to 0]{}1-\varsigma\,.\]
\end{rem}

\section{Comments on the possible extensions and open problems}\label{sec:6}
We presented our main results in an illustrative case in order to clearly demonstrate the reasoning involved.   Nonetheless, our approach is tailored for a more general setting. Let us comment on the possible scope of problems that our method allows to tackle. 
\begin{enumerate}[{\it (i)}]

\item If $V$  or $W$ is $1$-growth Lipschitz ($q_V=1$ or $q_W=1$), then by the definition~\eqref{f-q-growth-Lip} this function is Lipschitz. Then wherever in our proofs this property is used, we can skip some of our arguments by making use of the classical convex optimization tools. Consequently, we get the same order of convergence (with possibly improved constants).

\item As observed in the comments right below the formulation of assumptions~\ref{V}--\ref{mu0}, there is no difficulty in allowing for non  differentiable $V$ and $W$ in our technique. Nonetheless, then the sub-gradient notation overburdens the reasoning.

\item Problems involving repulsive interaction potential $W$ are of particular interest, embracing in particular the celebrated case of the Keller--Segel equation, cf. \cite{Hors,DiMarino-Santambrogio,Pert}. Our framework can allow for $\lambda_W$-convex $W$ for some $\lambda_W<0$, which we comment in Remarks~\ref{rem:relax-lambda-conv} and~\ref{rem:relax-lambda-conv-2}. Full analysis in this direction is expected to be challenging, but it should be considered as highly appealing. Numerical simulations are provided in Models B and C in Section \ref{ssec:ex:1d}, and Model G in Section \ref{ssec:ex:2d} below.
\end{enumerate}

\section{Simulations}\label{sec:7}

To validate our analytical results under various conditions, we conduct a series of illustrative simulations. First, we present the influence of various interaction potentials on one-dimensional examples. They are accompanied by a comparison between empirical tests of convergence rates and the theoretical ones obtained in this paper. Then we show two-dimensional examples, where the potentials $V$ and $W$ are not smooth and the confinement potential $V$ is not symmetric. All the simulations have the expected behavior according to our results and the general theory. 

In the examples, we compare local and nonlocal models such that $\lambda_W \geq 0$ (i.e., when our assumptions are satisfied). Additionally, to extend the scenarios, we take into account the case $\lambda_W < 0$, according to Remarks \ref{rem:relax-lambda-conv}~and~\ref{rem:relax-lambda-conv-2}.

Let us also mention that in the case $W\equiv0$, we did some simulations for real-world applications in \cite{BeChEnMi25}. Here we therefore focus on examples appearing in the mathematical literature.

\subsection{One-dimensional models}\label{ssec:ex:1d}
Let $d=1$, and consider the following potentials
\begin{align*}
V_1 (x) = \tfrac{1}{2} |x|^2 &\, , \quad
V_2 (x) = \tfrac{1}{2} |x|\, \boldsymbol{1}_{\{|x|\leq 1\}}+\tfrac{1}{2} |x|^3\, \boldsymbol{1}_{\{|x|> 1\}} \, , \quad\\
W_1(x) = -\tfrac{1}{8} |x|^2 \, , \quad 
&W_2 (x) = \tfrac{1}{3} |x|^3 - \tfrac{1}{8} |x|^2, \quad \, 
W_3 (x) = \tfrac{1}{3} |x|^3, \quad W_4(x)=\tfrac{1}{2}|x|^2\,.
\end{align*} 
The standard power law interaction potentials $W$,
\begin{equation*}
W_k (x) = 
\begin{cases}
\tfrac{\lvert x \rvert^k}{k}, & k \ne 0,\\
\log \lvert x \rvert, &k = 0,
\end{cases}
\end{equation*}
are studied e.g. in \cite[Equation (9)]{Carrillo_review_paper}, while general power-law interaction potentials $W$, 
\begin{equation*}
W(x) = \tfrac{|x|^a}{a} - \tfrac{|x|^b}{b}  \quad -d < b< a \, ,
\end{equation*}
can be found in e.g. \cite[Section 4.3.1]{Carrillo-primal} and \cite[Equation (1.3)]{katy_craig_blob}. We also refer to \cite[Section 3.11]{gomez}. These cases thus form the basis of our numerical simulations:
\begin{enumerate}[A.]
\item  Local model ($W = 0$) with $V_1$.
\item  Repulsive model with $V_1$ and $W_1$.
\item  Attractive-repulsive model with $V_1$ and $W_2$.
\item  Attractive model with $V_1$ and $W_3$.
\item Local model ($W = 0$) with $V_2$.
\item Attractive model with $V_1$ and $W_4$.
\end{enumerate}
%
The confinement potential $V_1$ is 1-convex and its gradient satisfies the $1$-growth condition. The interaction potentials $W_1$ and $W_2$ are $-\frac{1}{4}$-convex and their gradients are $1$- and $2$-growth Lipschitz, respectively. The potential $W_3$ is $0$-convex and its gradient satisfies the $2$-growth Lipschitz condition. It is worth mentioning that the function $\boldsymbol{\Psi}$ (collecting the joint impact of $V$ and $W$ as in~\eqref{def-PsiN}) is $\frac{1}{2}$-convex for Models~B and~C and $1$-convex for Model~D, see Lemma~\ref{lem:lambda-saving}. Thus, in any case, the particle system \eqref{eq:particle_system:model} has the~unique stationary measure $\overline{\boldsymbol{\mu}}_{\infty}$.

We consider the initial condition as a~Gaussian mixture such that 
\begin{equation} \label{eq:initial_1d}
\vr_0(x) = 0.2 * g_{0.5} (x-2) + 0.4* g_{0.5}(x+4) + 0.4* g_{1.125} (x-4) \, .
\end{equation}
We provide the simulation with time step $\tau = 10^{-3}$ and $1\, 000$ particles.
The exact analytical proximal step was used to simulate Model~A (Algorithm~\ref{alg:dgf}) and Model~B (Algorithm~\ref{alg:particle}). To solve Models~C and D, an approximate proximal step was incorporated to overcome the optimization problem (Algorithm~\ref{alg:dgf-pert}). In particular, the Newton conjugate optimization algorithm was used to reduce the number of iterations.

\begin{figure}[htbp]
\pdfimageresolution=100
\captionsetup[subfigure]{labelformat=empty,justification=centering} 

\centering
\begin{subfigure}{0.24\textwidth}
    \captionsetup{justification=centering} \includegraphics[width=\textwidth]{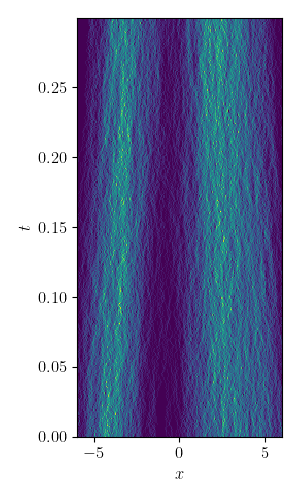}
    \caption{$\qquad$ A}
\end{subfigure}
\begin{subfigure}{0.24\textwidth}
    \includegraphics[width=\textwidth]{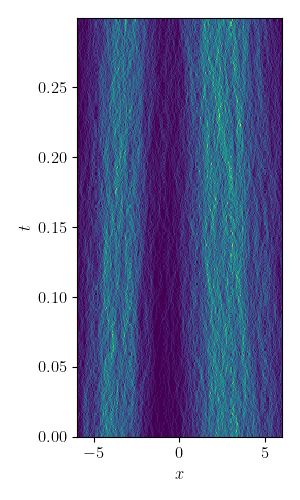}
    \caption{$\qquad$ B}
\end{subfigure}
\begin{subfigure}{0.24\textwidth}
    \includegraphics[width=\textwidth]{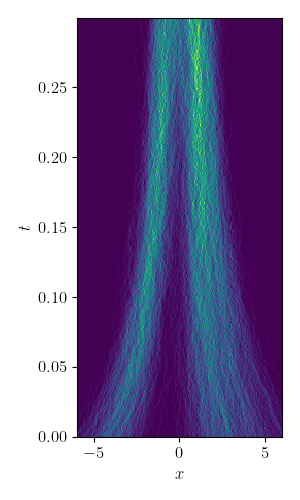}
    \caption{$\qquad$C}
\end{subfigure}
\begin{subfigure}{0.24\textwidth}
    \includegraphics[width=\textwidth]{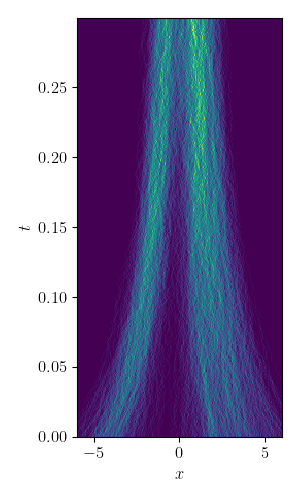}
    \caption{$\qquad$D}
\end{subfigure}

\caption{Evolution of density in one dimensional example with mixture of Gaussian as initial distribution, cf. \eqref{eq:initial_1d}. We show $10\,000$ trajectories for $\tau=10^{-3}$. We run Algorithm~\ref{alg:dgf} for Model 1, Algorithm~\ref{alg:particle}  for Model 2, and Algorithm~\ref{alg:particle} with a perturbed proximal step (as in Algorithm~\ref{alg:dgf-pert}) for Models 3 and 4.}
\label{fig:trajectories_1d}
\end{figure}

The results of the simulations illustrating the one-dimensional problems are presented in Figure~\ref{fig:trajectories_1d}. Let us point out that the first two models,~A and~B, are significantly different than the second two models,~C and~D. This is caused by the presence of an attractive force and the fact that the attractive force is of a~higher order than the force given by the confinement potential $V_1$. \newline

\begin{figure}[htbp]
\pdfimageresolution=100
\centering 
    \includegraphics[width=0.50\textwidth]{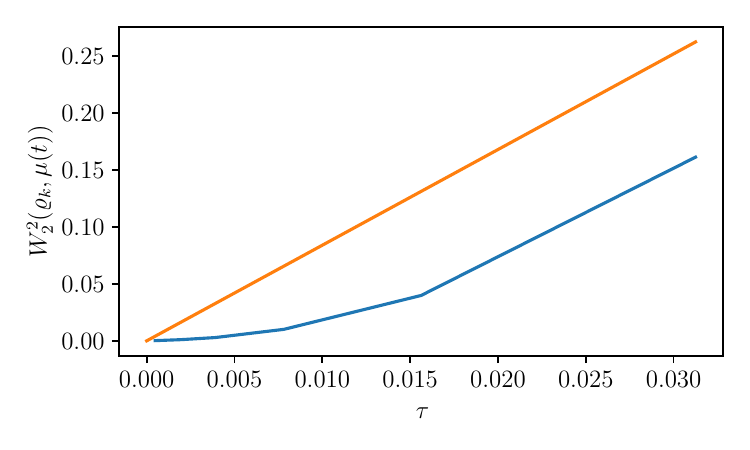}
\caption{Comparison of theoretical convergence rate due (orange) and empirical (blue) for Model E ($W\equiv 0$ and $V_2$) with respect to time step $\tau$, according to Theorem~\ref{thm:alg_bound-intro}, given as average from $15$ replications, for $N=100\,000$. }
\label{fig:convergence_rate1}
\end{figure}

\textit{Convergence rate with respect to time step $\tau$}. We consider the Model E.
We compute an approximate solution with $N=100\,000$ particles and time step $5 * \tau=10^{-8}$ as reference solution. It is worth mentioning that an exact proximal step was used and the comparison is done at the time step $t=0.125$. Figure~\ref{fig:convergence_rate1} presents the empirical solution given as the average of $15$ simulation runs (blue) compared to the theoretical convergence rate given by Theorem~\ref{thm:alg_bound-intro} (orange). Since the squared Wasserstein distance is plotted, we should get $W_2^2 (\vr_k, \mu(t)) \leq \mathcal{O}(\tau)$. We can see that the observed convergence rate is better for shorter time steps $\tau$. 

\textit{Convergence rate with respect to number of particles $N$.}
To validate the convergence rates provided by Theorem~\ref{thm:alg_bound-nonlocal}, we provide two convergence rate tests for both the time step $\tau$ and the number of particles $N$.
We show convergence rates for the attractive Model~F with a chosen time step $\tau$, cf. Figure~\ref{fig:convergence_rate2}.
We compute an approximate solution with time step $\tau=2.25 \times 10^{-7}$ and $2\,000$ particles, and consider it as the reference solution. This solution is then compared with the solutions up to $1\,000$ particles with time step $\tau = 10^{-6}$. An approximate proximal step (Algorithm~\ref{alg:dgf-pert}) was used.
Both cases satisfy the conditions mentioned below Theorem~\ref{thm:alg_bound-nonlocal} and then we should see the $W_2(\rhok^N, \mu(t)) \leq \mathcal{O} (N^{-\frac{1}{2}})$ convergence rate. 
We run 15 replications for the test scenarios and 2 replications for the reference simulation and plot the average of $W_2$-distances.\newline

\begin{figure}[htbp]
\centering 
\pdfimageresolution=100
    \includegraphics[width=0.5\textwidth]{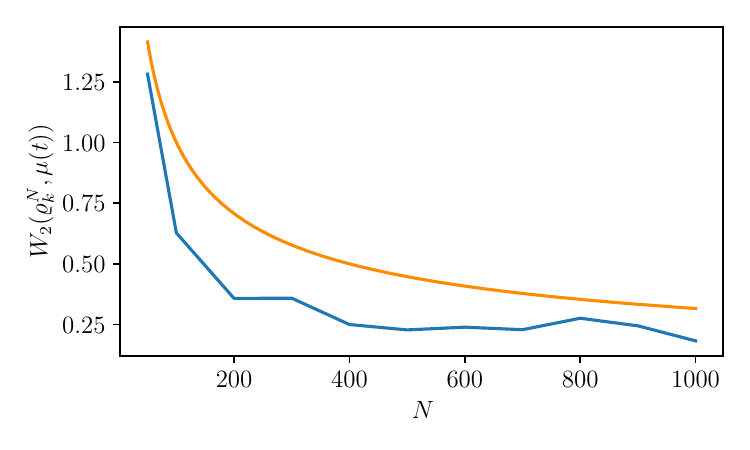}
\caption{Comparison of empirical  convergence rate (blue) and theoretical one (orange) for Model F (attractive) with respect to the number of particles $N$, according to Theorem~\ref{thm:alg_bound-nonlocal}, given as average from $15$ replications, with~$\tau=10^{-6}$.}
\label{fig:convergence_rate2}
\end{figure}

\subsection{Two-dimensional models}\label{ssec:ex:2d}
Let us now perform numerical simulations where we chose potentials within our generalized framework. We consider the confinement potential
\begin{equation*}
V_3(x) = \begin{cases}
(x_1+\tfrac{1}{2})^{4 + \arctan x_1} + x_2^2 - \tfrac{1}{16}, & x_1 \geq 0 \, , \\[2pt]
 \tfrac{1}{4} x_1^2 + x_2^2, & x_1 <0 \, ,
 \end{cases}
\end{equation*}
and the interaction potentials
\begin{equation*}
W_5 (x) = \begin{cases}
    
-\tfrac{1}{8} |x|^2 + 1, & |x| \leq 1 \, ,\\[2pt]
 -|x| + 1, & |x| > 1 \, , 
 \end{cases}
 \qquad
W_6 (x) = 
\begin{cases}
\tfrac{1}{2} |x|,& |x| \leq 1 \, , \\
 \tfrac{1}{2} |x|^3,&  |x| > 1 \, . 
 \end{cases}
\end{equation*}
forming the following models:
\begin{enumerate}

\item[G.] Repulsive model with $V_3$ and $W_5$.
\item[H.] Attractive model with $V_3$ and $W_6$. 
\end{enumerate}
It is worth mentioning that the function $V_3$ is $1$-convex and has a gradient satisfying the  $(3+\frac{\pi}{2})$-growth Lipschitz condition. The function $W_5$ is $(-\frac{1}{4})$-convex and has a global Lipschitz gradient ($q=1$). According to Lemma~\ref{lem:lambda-saving} the function $\boldsymbol{\Psi}$ corresponding to that setting is $\frac{1}{2}$-convex. The function $W_6$ is $1$-convex and its gradient satisfies $2$-growth Lipschitz condition. 

We take the initial condition in the form of a~Gaussian mixture such that
\begin{equation*}
\vr_0(x) = 0.2 \, g_{\frac{1}{2}\Sigma_1} (x - m_1) + 0.4 \, g_{\frac{1}{2}\Sigma_2} (x - m_2) + 0.4 \, g_{\frac{1}{2}\Sigma_3} (x - m_3);
\end{equation*}
\begin{equation*}
m_1 = (4,2),\ m_2 = (-2,-4),\ m_3 = (-2,3) , \ 
\Sigma_1 = \big( \begin{smallmatrix} 1 &0.2 \\ 0.2 & 1.3 \end{smallmatrix} \big) \, , \
\Sigma_2 = \big( \begin{smallmatrix} 1 & -0.2 \\ -0.2 & 1.3  \end{smallmatrix} \big) \, , \
\Sigma_3 = \big( \begin{smallmatrix} 2 & 0.2 \\ 0.2 & 2 \end{smallmatrix} \big) \, .
\end{equation*}
There is an abuse of notation with comparison to density $g_a$ in the Preliminaries section. In this example, we consider the multivariate normal density (measure) and replace the $2a \id$ term with an $2 \Sigma$ term, which is given as a lower index parameter. In this case we run Algorithm~\ref{alg:dgf-pert} with Newton conjugate gradient method.
We provide a simulation with $N=1\, 000$ particles and time step $\tau = 10^{-3}$. We can see the results for both models in Figure~\ref{fig:2d}.

\begin{figure}[htbp] 
\pdfimageresolution=100
\captionsetup[subfigure]{labelformat=empty}
\centering
\begin{subfigure}{0.03\textwidth}
    G \vspace{15mm}
\end{subfigure}
\begin{subfigure}{0.23\textwidth}
    \includegraphics[width=\textwidth]{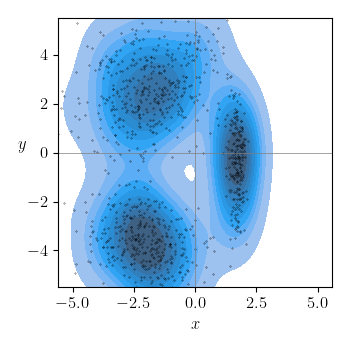}
\end{subfigure}
\begin{subfigure}{0.23\textwidth}
    \includegraphics[width=\textwidth]{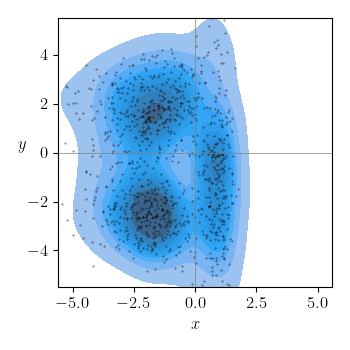}
\end{subfigure}
\begin{subfigure}{0.23\textwidth}
    \includegraphics[width=\textwidth]{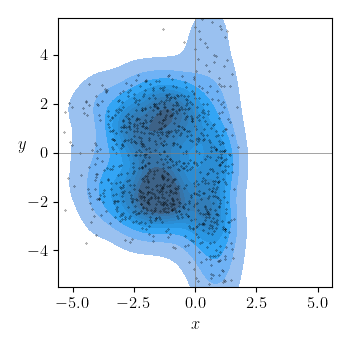}
\end{subfigure}
\begin{subfigure}{0.23\textwidth}
    \includegraphics[width=\textwidth]{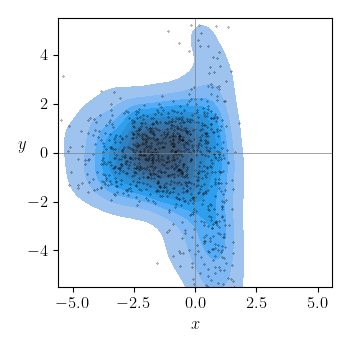}
\end{subfigure}
\begin{subfigure}{0.03\textwidth}
    H \vspace{15mm}
\end{subfigure}
\begin{subfigure}{0.23\textwidth}
    \includegraphics[width=\textwidth]{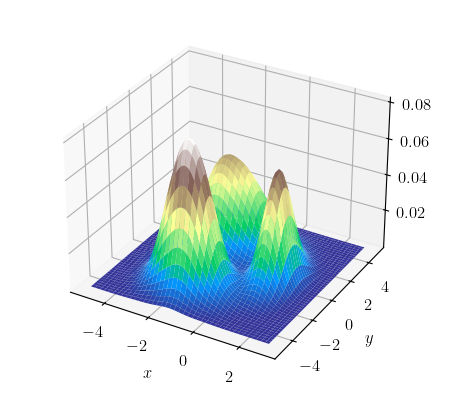}
        \caption{$t = 0.05$}
\end{subfigure}
\begin{subfigure}{0.23\textwidth}
    \includegraphics[width=\textwidth]{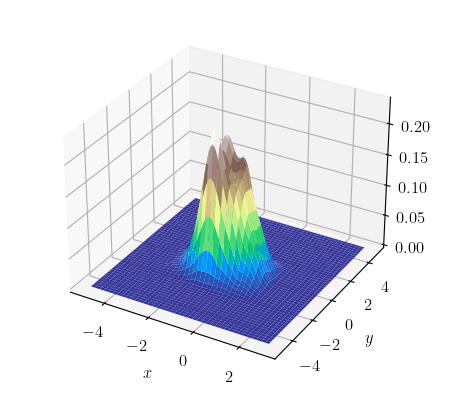}
        \caption{$t = 0.25$}
\end{subfigure}
\begin{subfigure}{0.23\textwidth}
    \includegraphics[width=\textwidth]{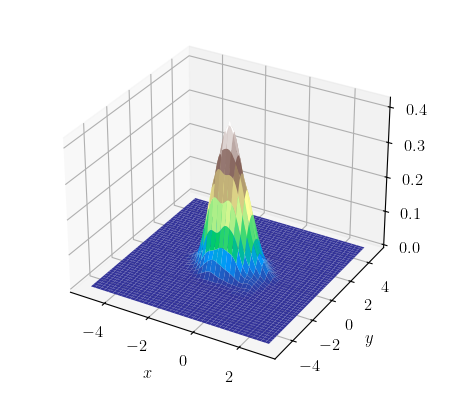}
        \caption{$t = 0.45$}
\end{subfigure}
\begin{subfigure}{0.23\textwidth}
    \includegraphics[width=\textwidth]{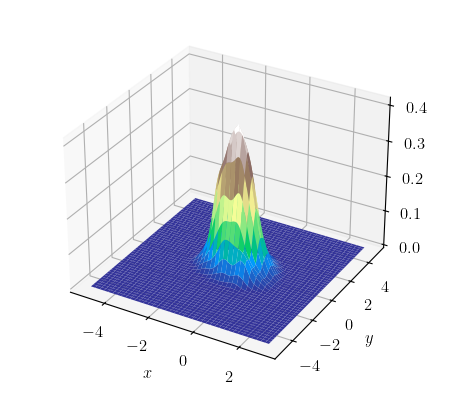}
        \caption{$t = 0.95$}
\end{subfigure}
\caption{Numerical solution given by Algorithm~\ref{alg:particle} with a perturbed proximal step as in Algorithm~\ref{alg:dgf-pert} for Model G (repulsive) and Model H (attractive). Simulation is done with $N=1\, 000$ particles and time step $\tau=10^{-3}$.}
\label{fig:2d}
\end{figure}

\section*{Appendix}

\renewcommand\thesection{\Alph{section}}
\setcounter{section}{1}
\setcounter{coro}{0}

A function $f:\R^d \rightarrow  \R$ is called $\lambda$-{\it convex} for $\lambda\in\R$ on $\R^d$ if for $\theta\in[0,1]$ it satisfies
\begin{equation}
f(\theta x + (1-\theta)y) \leq \theta f(x) + (1-\theta)f(y) - \tfrac{\lambda}{2} \theta (1-\theta) \lvert x - y \rvert ^2 \quad \forall x,y \in \R^d \, .\label{lambda-conv}
\end{equation}

\begin{rem}\rm 
Let $f$ be $\lambda$-convex. Since $f-\tfrac{\lambda}{2}|\cdot|^2$ is $0$-convex, we immediately get that
$$
\big(\nabla f(x)-\nabla f(y)\big)\cdot (x-y)\geq \lambda|x-y|^2\,.
$$
\end{rem}

A function $f:\R^d \rightarrow  \R^d$ is called $q$-{\it growth Lipschitz} on $\R^d$  for $q=1$ if it is Lipschitz, while for $q> 1$ if there exists $L_q>0$ such that 
\begin{equation}\label{f-q-growth-Lip}
|f(x)-f(y)|\leq L_q \min\{|x-y|,1\}(1+|x|^{q-1}+|y|^{q-1}) \quad \forall x,y \in \R^d \, .
\end{equation} 
Note that for $q>1$ the above definition do not require that $f$ is globally Lipschitz.

\begin{lem}\label{lem:grad-f-q-growth} If $\nabla f$ is $q$-growth Lipschitz for $q\geq 1$ with $L_q>0$, then it holds
\[
f(y)\leq f(x)+\nabla f(x)\cdot(y-x)+2^{2q-2} L_q \left(1+|x|^{q-1}+|y|^{q-1}
\right)\vert x-y \vert^2 \quad \forall x,y \in \R^d \, .
\]
\end{lem}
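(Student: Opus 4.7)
The plan is standard: apply the fundamental theorem of calculus to reduce the claim to an integral estimate along the segment from $x$ to $y$, and then control the integrand using the $q$-growth Lipschitz assumption \eqref{f-q-growth-Lip}.

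Concretely, I would start from
\[
f(y) - f(x) - \nabla f(x)\cdot(y-x) = \int_0^1 \bigl[\nabla f(x + t(y-x)) - \nabla f(x)\bigr]\cdot(y-x)\,\d t,
\]
apply the Cauchy--Schwarz inequality pointwise in $t$, and bound $|\nabla f(x + t(y-x)) - \nabla f(x)|$ via \eqref{f-q-growth-Lip} applied to the pair $(x,\,x+t(y-x))$. Using $t|y-x|$ in place of $\min\{t|y-x|,1\}$, this yields
\[
|\nabla f(x+t(y-x)) - \nabla f(x)| \leq L_q\, t\,|y-x|\bigl(1 + |x+t(y-x)|^{q-1} + |x|^{q-1}\bigr).
\]

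The only remaining task is to bound $|x+t(y-x)|^{q-1}$ by an expression symmetric in $x$ and $y$. Writing $x+t(y-x) = (1-t)x + ty$, the triangle inequality gives $|x+t(y-x)| \leq (1-t)|x| + t|y|$. Applying the fundamental inequality from the preliminaries (with $|a+b|^r \leq 2^{(r-1)_+}(|a|^r+|b|^r)$, and reading $|\cdot|^0 = 1$ in the degenerate case $q=1$) to the exponent $r = q-1$, one obtains
\[
|x+t(y-x)|^{q-1} \leq 2^{(q-2)_+}\bigl(|x|^{q-1}+|y|^{q-1}\bigr).
\]
Substituting back into the previous display, collecting the three terms $1$, $|x|^{q-1}$, $|y|^{q-1}$, and integrating with $\int_0^1 t\,\d t = 1/2$, every prefactor is absorbed into the uniform constant $2^{2q-2}L_q$, which yields the claim.

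The argument has no real obstacle; the sole delicacy is the bookkeeping of the powers of $2$, and the paper opts for the generous but uniform constant $2^{2q-2}L_q$ rather than tracking the sharper exponents that would arise from treating the regimes $q\in[1,2]$ and $q\geq 2$ separately.
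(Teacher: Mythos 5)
Your proof is correct and follows essentially the same route as the paper: fundamental theorem of calculus along the segment, Cauchy--Schwarz, the $q$-growth Lipschitz bound with $\min\{\cdot,1\}$ dropped, and the elementary power inequality to symmetrize the interior-point term $|x+t(y-x)|^{q-1}$. The only difference is cosmetic bookkeeping — you discard the factor $t$ inside $|x+t(y-x)|$ immediately rather than retaining $t^{q-1}|y-x|^{q-1}$ as the paper does before integrating — and your resulting constant $\tfrac{2^{(q-2)_+}+1}{2}$ (in fact sharper than the paper's intermediate one, and more careful for $q\in[1,2)$) is still dominated by $2^{2q-2}$.
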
 
\begin{proof}
By the Cauchy--Schwartz inequality and the assumption we get that\begin{align*}
    f(y)-f(x) -  \nabla f(x)\cdot(y-x)&=
\int_0^1  \nabla f(x+s(y-x))\cdot(y-x) \ds - \nabla f(x)\cdot(y-x)\\
&=\int_0^1 \left( \nabla f(x+s(y-x))-\nabla f(x)\right)\cdot(y-x) \ds\\
&\leq |x-y|\int_0^1\Big|\nabla f(x+s(y-x))-\nabla f(x)\Big| \ds\\
    &\leq L_q |x-y|^2 \int_0^1 s \big(1+|x+s(y-x)|^{q-1}+|x|^{q-1}\big) \ds \\
     &\leq L_q |x-y|^2 \big(\tfrac{1}{2}+ \tfrac{2^{q-2}+1}{2} |x|^{q-1} +\tfrac{2^{q-2}}{q+1} |y-x|^{q-1} \big)\\
     &\leq L_q \big(  \tfrac{2^{q-2}+1}{2} + \tfrac{2^{2q-4}}{q+1} \big) |x-y|^2 \big( 1+|x|^{q-1} + |y|^{q-1}\big) \,.
\end{align*}
Taking into account that $\tfrac{2^{q-2}+1}{2} + \tfrac{2^{2q-4}}{q+1}  \leq 2^{2q-2}$, we complete the proof.
\end{proof}

\begin{lem}\label{lem:Psi1}
    Let $V: \R^d \to \R$ be $\lambda_V$-convex for some $\lambda_V>0$ and  $W: \R^d \to \R$ be convex and even. Then $\boldsymbol{\Psi}$ given by~\eqref{def-PsiN} is $\lambda_V$-convex.
\end{lem}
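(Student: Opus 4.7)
The proof will follow by decomposing $\boldsymbol{\Psi}$ into a confinement part and an interaction part, and showing that the first is $\lambda_V$-convex on $\R^{dN}$ while the second is (at least) $0$-convex. Write
\begin{equation*}
\boldsymbol{\Psi}(\boldsymbol{x}) = \underbrace{\sum_{i=1}^N V(x^i)}_{=:\boldsymbol{\Psi}_V(\boldsymbol{x})} \; + \; \underbrace{\frac{1}{2N}\sum_{i,j=1}^N W(x^i - x^j)}_{=:\boldsymbol{\Psi}_W(\boldsymbol{x})}.
\end{equation*}

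The plan is, first, to verify $\lambda_V$-convexity of $\boldsymbol{\Psi}_V$ on $\R^{dN}$. Given $\boldsymbol{x},\boldsymbol{y}\in\R^{dN}$ and $\theta\in[0,1]$, I apply the scalar $\lambda_V$-convexity inequality \eqref{lambda-conv} for $V$ coordinate by coordinate, and then sum in $i=1,\ldots,N$. The point is that the Euclidean norm on $\R^{dN}$ satisfies $|\boldsymbol{x}-\boldsymbol{y}|^2 = \sum_{i=1}^N |x^i - y^i|^2$, so that
\begin{equation*}
\boldsymbol{\Psi}_V(\theta\boldsymbol{x}+(1-\theta)\boldsymbol{y}) \leq \theta\boldsymbol{\Psi}_V(\boldsymbol{x}) + (1-\theta)\boldsymbol{\Psi}_V(\boldsymbol{y}) - \tfrac{\lambda_V}{2}\theta(1-\theta)|\boldsymbol{x}-\boldsymbol{y}|^2.
\end{equation*}

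Second, I verify that $\boldsymbol{\Psi}_W$ is $0$-convex (i.e., convex in the ordinary sense). For each pair $(i,j)$, the map $\boldsymbol{x}\mapsto x^i-x^j$ is linear from $\R^{dN}$ to $\R^d$, so convexity of $W$ on $\R^d$ yields convexity of $\boldsymbol{x}\mapsto W(x^i-x^j)$ on $\R^{dN}$. A nonnegative linear combination of convex functions is convex, hence $\boldsymbol{\Psi}_W$ is convex. (Evenness of $W$ is not needed at this step; it would intervene only if one wanted to symmetrize the pair $(i,j)$.)

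Finally, adding a $\lambda_V$-convex function and a $0$-convex function yields a $\lambda_V$-convex function, giving the claim. There is no genuine obstacle here; the only thing worth flagging is the identification $|\boldsymbol{x}-\boldsymbol{y}|^2 = \sum_i |x^i-y^i|^2$, which is precisely what allows the per-coordinate $\lambda_V$-convexity of $V$ to upgrade to $\lambda_V$-convexity of $\boldsymbol{\Psi}_V$ with the \emph{same} constant $\lambda_V$ on $\R^{dN}$.
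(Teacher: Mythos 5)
Your proof is correct, and it takes a genuinely different route from the paper's. The paper verifies the gradient-monotonicity characterization of $\lambda_V$-convexity: it computes $\nabla\boldsymbol{\Psi}(\boldsymbol{x})\cdot\boldsymbol{x}$, pairs the terms $(i,j)$ and $(j,i)$ using the oddness of $\nabla W$ (this is where the evenness of $W$ is used), and concludes $\nabla\boldsymbol{\Psi}(\boldsymbol{x})\cdot\boldsymbol{x}\geq\lambda_V|\boldsymbol{x}|^2$ from the facts $\nabla V(0)=0$ and $x\cdot\nabla W(x)\geq 0$. That argument is really the special case $\boldsymbol{y}=0$ of the monotonicity inequality and implicitly relies on $V$ having its minimum at the origin, a hypothesis carried along from assumption (V) but not stated in the lemma itself. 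Your approach — splitting $\boldsymbol{\Psi}$ into $\boldsymbol{\Psi}_V+\boldsymbol{\Psi}_W$, checking $\lambda_V$-convexity of $\boldsymbol{\Psi}_V$ coordinate-wise via the identity $|\boldsymbol{x}-\boldsymbol{y}|^2=\sum_i|x^i-y^i|^2$, and obtaining convexity of $\boldsymbol{\Psi}_W$ as a nonnegative combination of precompositions of the convex $W$ with the linear maps $\boldsymbol{x}\mapsto x^i-x^j$ — works directly from the definition \eqref{lambda-conv}, avoids any appeal to where $V$'s minimum sits, and, as you correctly observe, does not need evenness of $W$ at all. This makes your proof both cleaner and mildly more general. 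One minor remark: the paper's symmetrization trick is the one that gets reused in the propagation-of-chaos argument (Theorem \ref{theo:chaos}), so there is some pedagogical value in seeing it here, but for the lemma itself your decomposition is the more economical argument.
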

\begin{proof}
    We notice that $\nabla \boldsymbol{\Psi}(\boldsymbol{x})\cdot \boldsymbol{x}=\sum_{i=1}^N \nabla V(x^i)\cdot x^i+\frac{1}{N}\sum_{i=1}^N\sum_{j\neq i} \nabla W(x^i-x^j)\cdot x^i$, so by  $\lambda_V$-convexity of $V$, convexity of $W$, and symmetry of $W$, we have 
    \begin{align*}
        \nabla \boldsymbol{\Psi}(\boldsymbol{x})\cdot \boldsymbol{x}&\geq \sum_{i=1}^N \lambda_V|x^i|^2+\frac{1}{N}\sum_{i=1}^N\sum_{j> i} \nabla W(x^i-x^j)\cdot (x^i-x^j)\geq  \lambda_V|\boldsymbol{x}|^2\,.\qedhere
    \end{align*}
\end{proof}

\begin{lem}\label{lem:Psi2}
    Let $V: \R^d \to \R$  and  $W: \R^d \to \R$ have $q_V$- and $q_W$-growth Lipschitz gradients, respectively, then there exist $L_{\boldsymbol{\Psi}}$, such that $\boldsymbol{\Psi}$ given by~\eqref{def-PsiN} satisfies
    \[
    |\nabla \boldsymbol{\Psi}(\boldsymbol{x})-\nabla \boldsymbol{\Psi}(\boldsymbol{y})| \leq L_{\boldsymbol{\Psi}}\min\{|\boldsymbol{x}-\boldsymbol{y}|,1\} (1+|\boldsymbol{x}|^{q_{\boldsymbol{\Psi}}-1}+|\boldsymbol{y}|^{q_{\boldsymbol{\Psi}}-1})\,,
    \]
    where $q_{\boldsymbol{\Psi}}=\max\{q_V,q_W\}$.
\end{lem}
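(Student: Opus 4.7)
The plan is to prove the bound componentwise in the index $i$ and then assemble the full-vector norm by summing squares. First I would expand the partial gradient
\[
\partial_{x^i}\boldsymbol{\Psi}(\boldsymbol{x}) = \nabla V(x^i) + \frac{1}{N}\sum_{j=1}^N \nabla W(x^i - x^j),
\]
where the factor $1/N$ (rather than $1/(2N)$) comes from combining the $i,j$ and $j,i$ contributions of the double sum using that $W$ is even (hence $\nabla W$ is odd). Subtracting at two points and applying the triangle inequality splits the difference into a single $V$-piece $\nabla V(x^i)-\nabla V(y^i)$ and the averaged $W$-pieces $\nabla W(x^i-x^j)-\nabla W(y^i-y^j)$, each of which is controlled by \eqref{f-q-growth-Lip} applied respectively to $\nabla V$ with exponent $q_V$ and to $\nabla W$ with exponent $q_W$.

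Next I would convert the resulting per-component bounds into bounds in the full norm on $\R^{dN}$ by means of the elementary inequalities $|x^i|\leq|\boldsymbol{x}|$, $|x^i-x^j|\leq 2|\boldsymbol{x}|$, $|x^i-y^i|\leq|\boldsymbol{x}-\boldsymbol{y}|$, and $|(x^i-x^j)-(y^i-y^j)|\leq 2|\boldsymbol{x}-\boldsymbol{y}|$, together with the monotonicity $\min\{2a,1\}\leq 2\min\{a,1\}$ for $a\geq 0$. To collapse the two growth rates $q_V-1$ and $q_W-1$ onto the single exponent $q_{\boldsymbol{\Psi}}-1$ I would use the dichotomy $|z|\leq 1$ vs.\ $|z|>1$, which gives
\[
|z|^{q-1}\leq 1+|z|^{q_{\boldsymbol{\Psi}}-1}\qquad\text{for every }q\in[1,q_{\boldsymbol{\Psi}}],
\]
thereby lifting both growth exponents to $q_{\boldsymbol{\Psi}}-1$ at the cost of an absorbable additive constant.

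Collecting the above, I would arrive, uniformly in $i$, at an estimate of the shape
\[
|\partial_{x^i}\boldsymbol{\Psi}(\boldsymbol{x})-\partial_{x^i}\boldsymbol{\Psi}(\boldsymbol{y})| \leq C\,\min\{|\boldsymbol{x}-\boldsymbol{y}|,1\}\bigl(1+|\boldsymbol{x}|^{q_{\boldsymbol{\Psi}}-1}+|\boldsymbol{y}|^{q_{\boldsymbol{\Psi}}-1}\bigr),
\]
where $C$ depends only on $q_V,q_W,L_{q_V},L_{q_W}$; the $1/N$ factor in front of the $W$-sum absorbs the $N$ per-$j$ contributions since each is bounded uniformly in $j$. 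Squaring and summing over $i=1,\dots,N$ then gives
\[
|\nabla\boldsymbol{\Psi}(\boldsymbol{x})-\nabla\boldsymbol{\Psi}(\boldsymbol{y})|^2 \leq NC^2 \min\{|\boldsymbol{x}-\boldsymbol{y}|,1\}^2\bigl(1+|\boldsymbol{x}|^{q_{\boldsymbol{\Psi}}-1}+|\boldsymbol{y}|^{q_{\boldsymbol{\Psi}}-1}\bigr)^2,
\]
and extracting the square root yields the claim with $L_{\boldsymbol{\Psi}}:=C\sqrt{N}$.

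The argument is essentially careful bookkeeping; the only mildly delicate point is the unification of the two distinct growth exponents into a single $q_{\boldsymbol{\Psi}}=\max\{q_V,q_W\}$, handled by the dichotomy above. A convenient observation is that $\min\{|x^i-y^i|,1\}\leq\min\{|\boldsymbol{x}-\boldsymbol{y}|,1\}$ by monotonicity, so the saturated form on the right-hand side of the statement emerges automatically without the need to split small and large displacements by hand. The degenerate case $q_V=q_W=1$ (in which \ref{f-q-growth-Lip} reduces to plain Lipschitz continuity) is routine: bounds for $\nabla V$ and $\nabla W$ of Lipschitz type propagate through the same componentwise decomposition.
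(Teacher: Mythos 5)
Your proof is correct and follows the same componentwise route the paper takes: expand $\partial_{x^i}\boldsymbol{\Psi}$, split by the triangle inequality, apply the $q$-growth Lipschitz bounds \eqref{f-q-growth-Lip} to the $V$- and $W$-pieces, pass to the full norm via $|x^i|\leq|\boldsymbol{x}|$, $|x^i-x^j|\leq 2|\boldsymbol{x}|$, $|x^i-y^i|\leq|\boldsymbol{x}-\boldsymbol{y}|$, and unify the exponents with $q_{\boldsymbol{\Psi}}=\max\{q_V,q_W\}$. You are a bit more explicit than the paper about the final assembly: the paper stops at the per-component estimate (with constant $C^i_{\boldsymbol{\Psi}}$), whereas you carry out the sum of squares over $i$ and observe that $L_{\boldsymbol{\Psi}}$ therefore scales like $\sqrt{N}$ -- which is correct, and indeed unavoidable once one insists on the saturated factor $\min\{|\boldsymbol{x}-\boldsymbol{y}|,1\}$ in the full $\R^{dN}$ norm.
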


\begin{proof}We have
    \begin{align*}
        |\partial_{x^i} \boldsymbol{\Psi}(\boldsymbol{x})-\partial_{x^i} \boldsymbol{\Psi}(\boldsymbol{y})|&\leq |\nabla V(x^i)-\nabla V(y^i)| +  \frac{1}{N}\sum_{\substack{j=1}}^{N} | \nabla W (x^i-x^j)-\nabla W(y^i-y^j)|\\
        &\leq L_{q_V} \min\{|x^i-y^i|,1\}(1+|x^i|^{q_V-1}+|y^i|^{q_V-1})\\
        &\ \ +L_{q_W} \min\{|(x^i-x^j)-(y^i-y^j)|,1\}(1+|x^i-x^j|^{q_W-1}+|y^i-y^j|^{q_W-1})\\
        &\leq C^i_{\boldsymbol{\Psi}} \min\{|x-y|,1\}(1+|\boldsymbol{x}|^{q_{\boldsymbol{\Psi}}-1}+|\boldsymbol{y}|^{q_{\boldsymbol{\Psi}}-1})\,. \qedhere
    \end{align*}
\end{proof}

\begin{lem} \label{lemma:ath_mom_bounds}
Let $Z \sim g_{\tau}$. Then for every $a\geq 0$, there exists $C_a=C(a)>0$, such that 
\begin{equation*}
\Ex |Z|^a \leq C_a \max\{d,d^\frac{a}{2}\} \tau^{\frac{a}{2}}  \,. 
\end{equation*}
\end{lem}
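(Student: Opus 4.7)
The plan is to reduce to standard Gaussian moments and then split into two regimes according to the size of $a$. Since $Z \sim \mathcal{N}(0, 2\tau\,\mathsf{Id})$, I first rescale by writing $Z = \sqrt{2\tau}\,G$ with $G \sim \mathcal{N}(0, \mathsf{Id})$, so that
\[
\Ex|Z|^a = (2\tau)^{a/2}\,\Ex|G|^a.
\]
Thus the problem is reduced to establishing $\Ex|G|^a \leq C_a \max\{d, d^{a/2}\}$ for $d \geq 1$.

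For the subcritical range $a \in [0,2]$, the map $t \mapsto t^{a/2}$ is concave on $[0,\infty)$, and Jensen's inequality applied to $|G|^2 = \sum_{i=1}^d G_i^2$ yields
\[
\Ex|G|^a = \Ex\bigl(|G|^2\bigr)^{a/2} \leq \bigl(\Ex|G|^2\bigr)^{a/2} = d^{a/2}.
\]
Since $a/2 \leq 1$ and $d \geq 1$, this is bounded by $d$, matching the claim $\max\{d,d^{a/2}\} = d$ in this range.

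For the supercritical range $a > 2$, pick the integer $m := \lceil a/2 \rceil \geq 2$ so that $a/(2m) \leq 1$. Applying Jensen's inequality to the concave map $t \mapsto t^{a/(2m)}$,
\[
\Ex|G|^a = \Ex\bigl(|G|^{2m}\bigr)^{a/(2m)} \leq \bigl(\Ex|G|^{2m}\bigr)^{a/(2m)}.
\]
Using that $|G|^2 \sim \chi^2_d$, the $m$-th moment of a chi-squared distribution is computed explicitly as
\[
\Ex|G|^{2m} = 2^m\,\frac{\Gamma(d/2 + m)}{\Gamma(d/2)} = \prod_{j=0}^{m-1}(d+2j) \leq (d+2m)^m \leq (1+2m)^m\,d^m
\]
for $d \geq 1$. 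Raising to the power $a/(2m)$ gives $\Ex|G|^a \leq C_a\,d^{a/2}$, which is the desired bound since $\max\{d,d^{a/2}\} = d^{a/2}$ when $a \geq 2$.

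Combining both cases and multiplying by $(2\tau)^{a/2}$ yields the conclusion with an explicit constant depending only on $a$. The only potentially delicate point is bookkeeping of constants in the two regimes; there is no conceptual obstacle, as this is a standard Gaussian moment estimate together with the chi-squared moment formula.
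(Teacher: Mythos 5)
Your proof is correct, but it follows a different route from the paper's. The paper first applies a power-mean inequality to reduce to a single coordinate,
\[
\Ex|Z|^a = \Ex\Big|\sum_{i=1}^d |Z^i|^2\Big|^{a/2}\leq \max\{d^{\frac a2-1},1\}\sum_{i=1}^d \Ex|Z^i|^a = \max\{d^{\frac a2},d\}\,\Ex|Z^1|^a,
\]
and then computes the one-dimensional moment $\Ex|Z^1|^a$ exactly from the fact that $(Z^1)^2$ is a scaled $\mathsf{Gamma}(\tfrac12,\cdot)$ random variable, yielding the explicit constant $C_a = 4^{a/2}\Gamma(\tfrac{1+a}{2})/\Gamma(\tfrac12)$ uniformly in $a$. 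You instead rescale to a standard Gaussian $G$, observe $\Ex|Z|^a = (2\tau)^{a/2}\Ex|G|^a$, and split into two regimes: for $a\le 2$, a single Jensen gives $\Ex|G|^a \le d^{a/2}\le d$; for $a>2$, you dominate by the $m$-th moment of $|G|^2\sim\chi^2_d$ with $m=\lceil a/2\rceil$, use the product formula $\Ex|G|^{2m}=\prod_{j=0}^{m-1}(d+2j)$, and interpolate by Jensen. Both arguments are elementary and valid. The paper's route handles all $a$ in one stroke and produces a sharp, fully explicit constant, whereas yours sidesteps the fractional Gamma-moment computation by working only with integer chi-squared moments at the cost of a slightly cruder constant and a case split — a perfectly acceptable trade-off given the lemma is only stated up to $C_a$.
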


\begin{proof}
Let $Z=(Z^1,\ldots, Z^d)$ and observe
$$
\Ex |Z|^a =\Ex\left|\sum_{i=1}^d |Z^i|^2\right|^\frac{a}{2}\leq \max\{d^{\frac{a}{2}-1},1\}\sum_{i=1}^d \Ex|Z^i|^a=\max\{d^{\frac{a}{2}},d\} \Ex|Z^1|^a.
$$
 We note that $\tfrac{1}{2\tau}(Z^1)^{2} \sim \chi^2(1) = \mathsf{Gamma}(\tfrac{1}{2}, \tfrac{1}{2})$,  $(Z^1)^2\sim \mathsf{Gamma}(\tfrac{1}{2}, \tfrac{1}{4\tau})$, and
\begin{align*}
\Ex |Z^1|^{a} &  = \int_{0}^{+\infty} y^{\frac{a}{2}} \frac{1}{(4 \tau)^\frac{1}{2} \Gamma (\frac{1}{2})} y ^{\frac{1}{2} -1 } \mathrm{e}^{-\frac{1}{2\tau} y} \d y = 
\frac{(4\tau)^\frac{1+a}{2} \Gamma \left(\frac{1+a}{2}\right)}{(4\tau)^\frac{1}{2} \Gamma (\frac{1}{2})} \int_{0}^{+\infty}\frac{1}{(4\tau)^\frac{1+a}{2} \Gamma \left(\frac{1+a}{2}\right)}y^{\frac{1+a}{2}-1} \mathrm{e}^{-\frac{1}{2\tau} y} \d y \, \\ &=4^{\frac{a}{2}}\tau^{\frac{a}{2}}\Gamma(\tfrac{1+a}{2})\Gamma(\tfrac{1}{2})^{-1}.\qedhere
\end{align*} 
\end{proof}

\begin{rem}\label{rem:ath_mom_bounds}\rm 
If $a=2$, from the proof of Lemma~\ref{lemma:ath_mom_bounds}, we infer that for $Z \sim g_{\tau}$ it holds $\Ex |Z|^2 =2\tau d$. 
\end{rem}

\begin{lem}\label{lem:prox-contracts-1}
If $V$ is a differentiable and $\lambda_V$-convex function with $\lambda_V\geq 0$, then, for any $\tau>0$ and $x,z\in\R^d$, it holds
$$
2\tau\big(V(\proxv(x))-V(z)\big)\leq |x-z|^2-|x-\proxv(x)|^2-(1+\tau\lambda_V)|\proxv(x)-z|\,.
$$
\end{lem}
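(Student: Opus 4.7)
The plan is to combine the first-order optimality condition defining the proximal operator with $\lambda_V$-convexity of $V$, and then simplify the resulting inner product via the polarization identity.

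First, I would write down the optimality condition. Since $\proxv(x)$ is the unique minimizer of $y\mapsto V(y)+\tfrac{1}{2\tau}|y-x|^2$ (the function is strictly convex because $\lambda_V\ge 0$), differentiability of $V$ yields the Euler--Lagrange identity
\[
\nabla V(\proxv(x))=\tfrac{1}{\tau}\bigl(x-\proxv(x)\bigr).
\]
Second, I would apply the standard $\lambda_V$-convexity inequality for $V$ at the base point $\proxv(x)$ tested against $z$:
\[
V(z)\ \geq\ V(\proxv(x))+\nabla V(\proxv(x))\cdot(z-\proxv(x))+\tfrac{\lambda_V}{2}|z-\proxv(x)|^2.
\]
Substituting the optimality identity and multiplying by $2\tau$ yields
\[
2\tau\bigl(V(\proxv(x))-V(z)\bigr)\ \leq\ 2\bigl(x-\proxv(x)\bigr)\cdot\bigl(\proxv(x)-z\bigr)-\tau\lambda_V|\proxv(x)-z|^2.
\]

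Finally, I would rewrite the cross term by the polarization identity $2a\cdot b=|a+b|^2-|a|^2-|b|^2$ with the choice $a=x-\proxv(x)$ and $b=\proxv(x)-z$, so that $a+b=x-z$. This gives
\[
2\bigl(x-\proxv(x)\bigr)\cdot\bigl(\proxv(x)-z\bigr)\ =\ |x-z|^2-|x-\proxv(x)|^2-|\proxv(x)-z|^2,
\]
and collecting the $|\proxv(x)-z|^2$ terms produces the factor $(1+\tau\lambda_V)$, yielding the claim. I do not anticipate any real obstacle here: the only subtlety is to notice that the three ingredients (optimality, $\lambda_V$-convexity, polarization) combine cleanly, and that differentiability of $V$ together with $\lambda_V\ge 0$ is exactly what is needed to justify both the optimality condition and the convexity inequality at the single point $\proxv(x)$.
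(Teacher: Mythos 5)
Your proposal is correct and follows essentially the same route as the paper: the first-order optimality condition $\nabla V(\proxv(x))=\tfrac{1}{\tau}(x-\proxv(x))$, the $\lambda_V$-convexity inequality at $\proxv(x)$, and the polarization identity to rewrite the cross term.
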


\begin{proof}
We denote $y=\proxv (x)$. Due to the $\lambda_V$-convexity of $V$ we know that 
\[
V(y)-V(z)\leq \nabla V(y) \cdot (y-z)-\tfrac{\lambda_V}{2}|y-z|^2\,.
\]
By the definition of $\proxv$ it follows that $\nabla V(y)=\tfrac{1}{\tau}(x-y)$. In turn
\[
2\tau\big(V(y)-V(z)\big)\leq 2(x-y)\cdot (y-z)-\tau\lambda_V|y-z|^2\,.
\]
To complete the result it suffices to note that $
|x-z|^2=|x-y|^2+|y-z|^2+2(x-y)\cdot(y-z)
$.
\end{proof}

\begin{lem}\label{lem:contr_wasserstein}
If $V$ is a differentiable and $\lambda_V$-convex function with $\lambda_V\geq 0$, then for any \( \mu, \nu \in \mathcal{P}_2 \), we have  
\[
W_2\big((\proxv)_\#\mu, (\proxv)_\#\nu\big) \leq \tfrac{1}{1+\tau\lambda} W_2(\mu,\nu)\,.
\]  
\end{lem}
\begin{proof}  
Let \( X_1 \sim \mu \) and \( X_2 \sim \nu \) be an optimal coupling, and define \( Y_1 = \proxv(X_1) \) and \( Y_2 = \proxv(X_2) \). By the \( \lambda \)-convexity of \( V \), we have  
\[
(\nabla V(Y_1) - \nabla V(Y_2)) \cdot (Y_1 - Y_2) \geq \lambda |Y_1 - Y_2|^2\,.
\]  
Since \( Y_1 = X_1 - \tau \nabla V(Y_1) \) and \( Y_2 = X_2 - \tau \nabla V(Y_2) \), it follows that  
\[
(X_1 - Y_1) - (X_2 - Y_2) = \tau (\nabla V(Y_2) - \nabla V(Y_1))\,.
\]
We see that
\[
((X_1 - Y_1) - (X_2 - Y_2))\cdot (Y_1 - Y_2)  = \tau (\nabla V(Y_2) - \nabla V(Y_1))\cdot (Y_1 - Y_2)\geq \tau\lambda |Y_1 - Y_2|^2 \,.
\] and by the Cauchy–Schwarz inequality it holds
\[((X_1 - Y_1) - (X_2 - Y_2))\cdot (Y_1 - Y_2)=(X_1-X_2)\cdot(Y_1-Y_2)-|Y_1-Y_2|^2 \leq |X_1 -X_2|\,|Y_1 - Y_2|-|Y_1-Y_2|^2 \]
Altogether, by reordering terms, we obtain  
\[
\tfrac{1}{1+\tau\lambda} |X_1 - X_2| \geq |Y_1 - Y_2|\,.
\]  
Taking expectations on both sides gives  
\[
\left( \tfrac{1}{1+\tau\lambda} \right)^2 W_2^2(\mu,\nu) \geq \Ex |Y_1 - Y_2|^2\geq W_2^2((\proxv)_\#\mu, (\proxv)_\#\nu) \,,
\]  
which concludes the proof.  
\end{proof}   

\begin{coro}
\label{cor:prox-contracts}
  If $V$ is a differentiable and $\lambda_V$-convex function with $\lambda_V\geq 0$ that attains its minimum at $x^*=0$, then, for any $\tau>0$ and $x\in\R^d$,
    \[
    |\proxv(x)|\leq \tfrac{1}{1+\tau\lambda_V}|x|\,.
    \]
\end{coro}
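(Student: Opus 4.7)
The plan is to deduce the corollary directly from Lemma~\ref{lem:prox-contracts-1} by a well-chosen test point. Specifically, I would apply that lemma with $z = x^* = 0$, since $x^*=0$ is the minimizer of $V$ by assumption. With this choice, the inequality from Lemma~\ref{lem:prox-contracts-1} reads
\[
2\tau\bigl(V(\proxv(x)) - V(0)\bigr) \leq |x|^2 - |x - \proxv(x)|^2 - (1+\tau\lambda_V)\,|\proxv(x)|^2.
\]

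Next I would exploit the hypothesis that $V$ attains its minimum at $0$. This gives $V(0) \leq V(\proxv(x))$, so the left-hand side of the displayed inequality is nonnegative. Consequently,
\[
0 \leq |x|^2 - |x - \proxv(x)|^2 - (1+\tau\lambda_V)\,|\proxv(x)|^2,
\]
and discarding the (nonnegative) term $|x-\proxv(x)|^2$ yields
\[
(1+\tau\lambda_V)\,|\proxv(x)|^2 \leq |x|^2,
\]
which is precisely the claimed bound after dividing by $1+\tau\lambda_V>0$.

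There is essentially no obstacle here; the only subtlety is recognizing that the minimizing property of $0$ is exactly what converts Lemma~\ref{lem:prox-contracts-1} (a quantitative descent estimate) into a contraction estimate for $\proxv$ around the minimizer. The sharp coefficient $\tfrac{1}{1+\tau\lambda_V}$ comes for free from the $(1+\tau\lambda_V)$ weighting already present in the conclusion of Lemma~\ref{lem:prox-contracts-1}, and no further computation is required.
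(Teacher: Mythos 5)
Your proof is correct and follows essentially the same route as the paper: apply Lemma~\ref{lem:prox-contracts-1} with $z=0$, use the minimality of $V$ at $0$ to see the left-hand side is nonnegative, then discard the nonnegative term $|x-\proxv(x)|^2$ and rearrange. (Incidentally, the statement of Lemma~\ref{lem:prox-contracts-1} in the paper has a typo — the final term should read $(1+\tau\lambda_V)|\proxv(x)-z|^2$ — and you correctly used the squared version.)
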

\begin{rem}\rm
If the minimum of $V$ was not at $x^*=0$, then the result would have been
    \[
    |\proxv(x)-x^*|\leq \tfrac{1}{1+\tau\lambda_V}|x-x^*|\,.
    \]
\end{rem}
\begin{proof}
Taking $\mu := \delta_x$ and $\nu := \delta_{x^*} $ in Lemma~\ref{lem:contr_wasserstein} and using the fact that $W_2 (\delta_x, \delta_{x^*}) = |x-x^*| $.
\end{proof}

\begin{lem}\label{lem:wass-pert}
Suppose $X\sim \mu$, $Y=X+\xi$ for some $\xi\in\R^d$ and $Y\sim \nu$. Then
\begin{equation*}
W_2^2 (\mu,\nu) \leq |\xi|^2 \, .
\end{equation*}
\end{lem}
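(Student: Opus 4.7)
The plan is to exploit the equivalent ``random variable'' formulation of the 2-Wasserstein distance recalled in Section~\ref{sec:prelim}, namely
\[
W_2^2(\mu,\nu)=\inf\bigl\{\Ex|X'-Y'|^2:\ X'\sim\mu,\ Y'\sim\nu\bigr\}.
\]
Since the pair $(X,Y)$ given in the hypothesis already satisfies $X\sim\mu$ and $Y\sim\nu$, it furnishes an admissible coupling, so its cost is an upper bound for the infimum. All that remains is to compute the cost of this particular coupling.

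First I would verify that $(X,Y)$ indeed induces a transport plan: the joint law $\boldsymbol{\gamma}:=\mathrm{Law}(X,Y)\in\Gamma(\mu,\nu)$ because $\Pi^1_{\#}\boldsymbol{\gamma}=\mathrm{Law}(X)=\mu$ and $\Pi^2_{\#}\boldsymbol{\gamma}=\mathrm{Law}(X+\xi)=\nu$ by hypothesis. Then, using the deterministic nature of $\xi$, I compute
\[
\Ex|X-Y|^2=\Ex|X-(X+\xi)|^2=\Ex|\xi|^2=|\xi|^2.
\]
Combining this with the variational characterization of $W_2$ gives $W_2^2(\mu,\nu)\leq|\xi|^2$, which is exactly the claimed inequality.

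There is no genuine obstacle here; the statement is a one-line application of the definition, and the only point worth emphasizing is that $\xi$ is a fixed vector (and not a random one), which is what allows the expectation to collapse to $|\xi|^2$. Had $\xi$ been random, the bound would have read $\Ex|\xi|^2$ instead. This lemma is used in the proof of Theorem~\ref{theo:linear-problem-stability} precisely with a deterministic realization of the perturbation at each step, so the present form is sufficient.
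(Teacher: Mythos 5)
Your proof is correct and follows exactly the same route as the paper's: both plug the given coupling $(X, X+\xi)$ into the variational (random-variable) formulation of $W_2^2$ and read off the cost $|\xi|^2$. The only difference is that you spell out the verification that the joint law lies in $\Gamma(\mu,\nu)$, which the paper leaves implicit.
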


\begin{proof}It suffices to note that $
W_2^2 (\mu, \nu) =\inf_{X \sim \mu , Y \sim \nu} \Ex \lvert X - Y \rvert^2 \leq \Ex \lvert \xi \rvert^2 = |\xi|^2 \, .$
\end{proof}

\begin{lem}\label{lem:minimizer}
The $\lambda$-convex and lower semicontinuous functional $\cF$ given by \eqref{cF} admits a unique minimizer.
\end{lem}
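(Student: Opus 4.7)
The plan is to apply the direct method of the calculus of variations in the space $(\cP_2(\R^d), W_2)$, then use strict $\lambda$-convexity to upgrade existence to uniqueness.

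First, I would establish that $\cF$ is bounded below and coercive with respect to the second moment. Since $V$ is $\lambda_V$-convex with $\lambda_V > 0$ and attains its minimum at $0$, we have $V(x) \geq V(0) + \tfrac{\lambda_V}{2} |x|^2$, so $\cFV[\mu] \geq V(0) + \tfrac{\lambda_V}{2} \mu(|\cdot|^2)$. Because $W$ is convex and symmetric, $\nabla W(0) = 0$ implies $W(x) \geq W(0)$, hence $\cFW[\mu] \geq \tfrac{1}{2} W(0)$. For the entropy, a standard Gaussian comparison yields $\cFE[\mu] \geq -C\big(1 + \mu(|\cdot|^2)\big)$ on $\cP_2(\R^d)$. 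Choosing $\lambda_V$ large enough relative to the entropy bound (or simply absorbing a fraction of $\cFV$ into $\cFE$), we obtain a coercivity estimate of the form
\[
\cF[\mu] \geq c_1 \mu(|\cdot|^2) - c_2
\]
for constants $c_1 > 0$ and $c_2 \in \R$.

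Next, I would pick a minimizing sequence $(\mu_n) \subset \cP_2(\R^d)$ with $\cF[\mu_n] \to \inf_{\cP_2} \cF$. The coercivity bound guarantees $\sup_n \mu_n(|\cdot|^2) < +\infty$, which by Prokhorov's theorem gives tightness and hence a narrowly convergent subsequence $\mu_{n_k} \rightharpoonup \muinfty$, with $\muinfty \in \cP_2(\R^d)$ and $W_2$-convergence along a further subsequence (using uniform integrability of $|x|^2$). Lower semicontinuity of each of $\cFV, \cFE, \cFW$ with respect to narrow convergence on sublevels of the second moment (see \cite[Chapter~9]{Ambrosio-Gigli-Savare}, in particular Examples~9.3.4 and~9.3.6) then gives $\cF[\muinfty] \leq \liminf_k \cF[\mu_{n_k}] = \inf \cF$, so $\muinfty$ is a minimizer.

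For uniqueness, suppose $\mu^1 \neq \mu^2$ are both minimizers. Since $\cF$ is $\lambda_V$-convex along generalized geodesics with $\lambda_V > 0$, we choose $\mu^1$ as the base measure and take the generalized geodesic $\mu_\theta^{1 \to 2}$ induced by an optimal plan $\boldsymbol{\mu} \in \Gamma_o(\mu^1, \mu^1, \mu^2)$. Strict $\lambda$-convexity with $\lambda_V > 0$ at $\theta = \tfrac{1}{2}$ yields
\[
\cF[\mu_{1/2}^{1\to 2}] \leq \tfrac{1}{2}\cF[\mu^1] + \tfrac{1}{2}\cF[\mu^2] - \tfrac{\lambda_V}{8} W_{2,\boldsymbol{\mu}}^2(\mu^1,\mu^2) < \inf \cF,
\]
a contradiction since $W_{2,\boldsymbol{\mu}}^2(\mu^1,\mu^2) \geq W_2^2(\mu^1,\mu^2) > 0$.

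I expect the main obstacle to be the lower semicontinuity of the interaction term $\cFW$ along the minimizing sequence, which requires either narrow convergence of the product measures or enough integrability of $W$ against the tails; this is where the $q_W$-power growth Lipschitzianity of $\nabla W$ combined with uniform moment bounds on $(\mu_n)$ provides the required uniform integrability. All other steps follow by now-classical arguments in the Wasserstein framework of \cite{Ambrosio-Gigli-Savare}.
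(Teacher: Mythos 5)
Your proof is correct in spirit but follows a genuinely different route from the paper's. You use the direct method: coercivity of $\cF$ from the structure of $V$, $W$, and the entropy lower bound, tightness via a uniform second-moment estimate and Prokhorov, and then narrow lower semicontinuity of each piece to pass to the limit. The paper instead exploits $\lambda$-convexity along generalized geodesics directly (following the argument in \cite[Lemma~2.4.8]{Ambrosio-Gigli-Savare}): for any two terms $\nu_n,\nu_m$ of a minimizing sequence, evaluating $\cF$ at the midpoint of a generalized geodesic and using that $\cF[\nu^{n\to m}_{1/2}]\geq \inf\cF$ forces
\[
\tfrac{\lambda}{8}W_2^2(\nu_n,\nu_m)\leq \tfrac12(\omega_n+\omega_m)\,,
\]
so the minimizing sequence is already Cauchy in $W_2$. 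Existence then follows from completeness of $(\cP_2,W_2)$ plus the assumed lower semicontinuity, and uniqueness comes from the same quadratic modulus estimate. That argument is shorter, bypasses Prokhorov and coercivity entirely, and gives convergence of the minimizing sequence in $W_2$ rather than merely along a narrowly convergent subsequence. Your uniqueness step is essentially the same idea as the paper's, just packaged as a contradiction rather than a Cauchy estimate.

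One small inaccuracy in your writeup: a uniform bound on $\mu_n(|\cdot|^2)$ gives tightness and hence narrow compactness, but it does not give uniform integrability of $|x|^2$, so you cannot claim $W_2$-convergence along a further subsequence (consider $\mu_n=(1-\tfrac1n)\delta_0+\tfrac1n\delta_{\sqrt n}$, which has bounded second moment and converges narrowly to $\delta_0$ but not in $W_2$). Fortunately, you never actually need $W_2$-convergence for the existence step: $V$ and $W$ are bounded below and continuous, so $\cFV$ and $\cFW$ are narrowly lower semicontinuous, and $\cFE$ is narrowly lower semicontinuous on sublevels of the second moment (\cite[Remark~9.3.8]{Ambrosio-Gigli-Savare}). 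Dropping the $W_2$-convergence claim and working with narrow convergence throughout repairs this without changing the structure of your argument.
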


\begin{proof}
We follow the second part of Lemma 2.4.8 in \cite{Ambrosio-Gigli-Savare}. Take a sequence $\nu_n\in \cP_2(\R^d)$ such that $\cF[\nu_n]\leq \inf_{\cP_2(\R^d)}\cF[\cdot]+\omega_n$ with $\omega_n\to0$ as $n\to+\infty$. Then by the $\lambda$-convexity along generalized geodesics, we get 
\begin{align*}
\tfrac{\lambda}{8}W_2^2(\nu_n,\nu_m)&\leq \tfrac{1}{2}\cF[\nu_n]+\tfrac{1}{2}\cF[\nu_m]-\cF[\nu_{\frac{1}{2}}^{n\rightarrow m}]\leq \tfrac{1}{2}(\omega_n+\omega_m)+\inf_{\cP_2(\R^d)}\cF[\cdot]-\cF[\nu_{\frac{1}{2}}^{n\rightarrow m}]\leq \tfrac{1}{2}(\omega_n+\omega_m).
\end{align*}
The last inequality follows by the property of infimum. Hence, $\nu_n$ is Cauchy in $W_2$, and then there exist a $\nu\in \cP_2(\R^d)$ such that $\nu=\inf_{\cP_2(\R^d)}\cF[\cdot]$. 
\end{proof}

\begin{lem}\label{lem:power-est}
Let $X,Z\in\cP_a(\rn),$ $a\geq 2$, be arbitrary and $Z$ be independent of $X$ and such that $\Ex[Z]=0$. Then
\begin{equation}\label{eq:taylor_exp}
\Ex | X + Z |^a \leq \Ex | X |^a + C_a (\Ex |X|^{a-2} |Z|^2 + \Ex |Z|^a)\,.    
\end{equation}
\end{lem}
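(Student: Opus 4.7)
The plan is to establish a pointwise version of the inequality via a second order Taylor expansion of $f(y):=|y|^a$, and then take expectations, using the independence of $X$ and $Z$ together with $\Ex[Z]=0$ to kill the first order term. Since $a\geq 2$, direct differentiation gives $\nabla f(y)=a|y|^{a-2}y$ and
\[
\nabla^2 f(y)=a(a-2)|y|^{a-4}y\otimes y+a|y|^{a-2}\id,
\]
which one checks is continuous on the whole of $\R^d$ (the factor $a(a-2)$ vanishes at $a=2$, while for $a>2$ the singular factor $|y|^{a-4}$ is tamed by $|y|^2$). In particular $f\in C^2(\R^d)$ and one has the uniform operator-norm bound $\|\nabla^2 f(y)\|\leq a(a-1)|y|^{a-2}$ for all $y\in\R^d$.

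First I would apply Taylor's theorem with integral remainder along the segment joining $x$ to $x+z$:
\[
|x+z|^a=|x|^a+a|x|^{a-2}\,x\cdot z+\int_0^1 (1-t)\,z^\top\nabla^2 f(x+tz)\,z\,\mathrm{d}t.
\]
Next I would bound the remainder using the Hessian estimate above and the elementary inequality $|u+v|^p\leq 2^{(p-1)_+}(|u|^p+|v|^p)$ with $p=a-2\geq 0$ (which is one of the fundamental inequalities recalled in Section~\ref{sec:prelim}):
\[
\bigl|z^\top\nabla^2 f(x+tz)\,z\bigr|\leq a(a-1)\,|x+tz|^{a-2}|z|^2\leq C_a\bigl(|x|^{a-2}+|z|^{a-2}\bigr)|z|^2.
\]
Integrating in $t$ gives the pointwise inequality
\[
|x+z|^a\leq |x|^a+a|x|^{a-2}\,x\cdot z+C_a\bigl(|x|^{a-2}|z|^2+|z|^a\bigr).
\]

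Finally, I would substitute $X$ and $Z$ and take expectations. Since $X$ and $Z$ are independent and $\Ex[Z]=0$, the first order term satisfies
\[
\Ex\bigl[a|X|^{a-2}X\cdot Z\bigr]=a\,\Ex\bigl[|X|^{a-2}X\bigr]\cdot\Ex[Z]=0,
\]
which, together with the bound above, gives exactly \eqref{eq:taylor_exp}. The main (mild) obstacle is verifying the global $C^2$-regularity of $|y|^a$ at the origin and the corresponding Hessian bound; once that is in hand, the rest is a routine Taylor expansion combined with the zero-mean independence cancellation.
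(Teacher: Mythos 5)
Your proof is correct and follows essentially the same route as the paper's: a second-order Taylor expansion of $|\cdot|^a$, a bound on the Hessian of the form $a(a-1)|y|^{a-2}$, and cancellation of the first-order term via independence and $\Ex[Z]=0$. The only cosmetic difference is that you use the integral form of the remainder while the paper uses the Cauchy (mean-value) form, and you also spell out the "elementary manipulation" that the paper leaves implicit; both are fine.
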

\begin{proof}
We compute the Taylor expansion of first order of the function $f(z) = |\xi + z|^a, a \geq 2$ around zero with reminder estimation. To do that, we compute the first and second derivative 
\begin{flalign*}
\nabla f (z) &= a |\xi + z|^{a-1} \tfrac{(\xi + z)}{|\xi + z|} =   a |\xi + z|^{a-2} (\xi + z) \, , \\
\nabla^2 f (z) &= 2 (\tfrac{a}{2} -1)a |\xi+z|^{a-4} (\xi+z)\otimes (\xi+z) + \id \, a |\xi+z|^{a-2} \,.
\end{flalign*}
We make use of the Taylor expansion with the Cauchy reminder such that 
\begin{equation*} 
f(z) \leq f(0) + \nabla f(0)\cdot z + |\tfrac{1}{2} z^\top \nabla^2 f(\theta z) z |  \quad\text{for some }\ \theta \in [0,1] \, .
\end{equation*}
We take $z=Z$, $\xi=X$, and consider $\Ex [f(Z)]$. Then~\eqref{eq:taylor_exp} follows from elementary manipulation.
\end{proof}

The following lemma might be inferred from \cite[Lemma~8]{carrillo_prop_chaos_lambda} under more strict smoothness assumptions.

\begin{lem}\label{lem:lambda-saving}
Let  $V$ be $\lambda_V$-convex and $W$ be $\lambda_W$-convex such that $\lambda_V,  \lambda_W \in \mathbb{R}$. Then the function $\boldsymbol{\Psi}$ is $\min \{\lambda_V, \lambda_V + 2 \lambda_W\}$-convex.
\end{lem}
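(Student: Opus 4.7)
The plan is to verify the $\lambda$-convexity defining inequality
\[
\boldsymbol{\Psi}(\theta\boldsymbol{x}+(1-\theta)\boldsymbol{y})\leq \theta\boldsymbol{\Psi}(\boldsymbol{x})+(1-\theta)\boldsymbol{\Psi}(\boldsymbol{y})-\tfrac{\lambda}{2}\theta(1-\theta)|\boldsymbol{x}-\boldsymbol{y}|^2
\]
directly from the definition \eqref{lambda-conv}, splitting $\boldsymbol{\Psi}=\Phi_V+\Phi_W$ where $\Phi_V(\boldsymbol{x})=\sum_i V(x^i)$ and $\Phi_W(\boldsymbol{x})=\frac{1}{2N}\sum_{i,j}W(x^i-x^j)$. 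For $\Phi_V$, applying $\lambda_V$-convexity of $V$ to each coordinate and using $|\boldsymbol{x}-\boldsymbol{y}|^2=\sum_i|x^i-y^i|^2$ gives $\lambda_V$-convexity on $\R^{Nd}$ for free.

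The heart of the argument is $\Phi_W$. Since $(\theta\boldsymbol{x}+(1-\theta)\boldsymbol{y})^i-(\theta\boldsymbol{x}+(1-\theta)\boldsymbol{y})^j=\theta(x^i-x^j)+(1-\theta)(y^i-y^j)$, applying $\lambda_W$-convexity of $W$ to each pair $(i,j)$ produces a remainder $-\tfrac{\lambda_W}{2}\theta(1-\theta)|w^i-w^j|^2$ with $w^k:=x^k-y^k$. Summing over $(i,j)$ and dividing by $2N$, the algebraic identity
\[
\sum_{i,j=1}^N|w^i-w^j|^2=2N|\boldsymbol{w}|^2-2\bigl|\textstyle\sum_i w^i\bigr|^2
\]
reduces the cumulative remainder to $-\tfrac{\lambda_W}{2}\theta(1-\theta)\bigl(|\boldsymbol{w}|^2-\tfrac{1}{N}|\sum_iw^i|^2\bigr)$. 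Combined with the $V$-contribution, we obtain
\[
\boldsymbol{\Psi}(\theta\boldsymbol{x}+(1-\theta)\boldsymbol{y})\leq \theta\boldsymbol{\Psi}(\boldsymbol{x})+(1-\theta)\boldsymbol{\Psi}(\boldsymbol{y})-\tfrac{\lambda_V+\lambda_W}{2}\theta(1-\theta)|\boldsymbol{w}|^2+\tfrac{\lambda_W}{2N}\theta(1-\theta)\bigl|\textstyle\sum_iw^i\bigr|^2 .
\]

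The main obstacle, such as it is, is to absorb the last ``center-of-mass'' term whose sign depends on $\lambda_W$. I would handle this by a short case split using only the elementary Cauchy--Schwarz bound $|\sum_iw^i|^2\leq N|\boldsymbol{w}|^2$. If $\lambda_W\geq 0$, this bound yields a contribution $\leq\tfrac{\lambda_W}{2}\theta(1-\theta)|\boldsymbol{w}|^2$, which cancels the $\lambda_W$ appearing in the main remainder and leaves exactly $\lambda_V$-convexity. If $\lambda_W\leq 0$, the extra term is already nonpositive, hence can be dropped, giving $(\lambda_V+\lambda_W)$-convexity. In both cases the resulting constant dominates $\min\{\lambda_V,\lambda_V+2\lambda_W\}$ (in the second case because $\lambda_W\leq 0$ forces $\lambda_V+\lambda_W\geq\lambda_V+2\lambda_W$), which proves the claim. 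Note that no smoothness of $V$ or $W$ beyond $\lambda$-convexity is used; in particular there is no need to appeal to Hessians, although the same decomposition is what one would read off from $\nabla^2\Phi_W$ in the $C^2$ case.
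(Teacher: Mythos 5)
Your proof is correct, and in fact it establishes a strictly stronger conclusion than the lemma claims. The paper's own argument follows the same first step (apply $\lambda_V$- and $\lambda_W$-convexity termwise, then split on the sign of $\lambda_W$), but in the case $\lambda_W<0$ it bounds the cross-term crudely via $|a-b|^2\leq 2|a|^2+2|b|^2$, which after summation produces the modulus $\lambda_V+2\lambda_W$. You instead keep the exact algebraic identity
$\sum_{i,j}|w^i-w^j|^2 = 2N|\boldsymbol{w}|^2 - 2\bigl|\sum_i w^i\bigr|^2$,
isolate the center-of-mass term, and then control it with nothing more than Cauchy--Schwarz. This yields modulus $\lambda_V$ for $\lambda_W\geq 0$ and $\lambda_V+\lambda_W$ for $\lambda_W\leq 0$, i.e.\ $\min\{\lambda_V,\lambda_V+\lambda_W\}$-convexity of $\boldsymbol{\Psi}$. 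That constant is sharp: for the quadratic model $V(x)=\tfrac{\lambda_V}{2}|x|^2$, $W(x)=\tfrac{\lambda_W}{2}|x|^2$, the Hessian $\nabla^2\boldsymbol{\Psi}$ has eigenvalues $\lambda_V$ (on the diagonal direction) and $\lambda_V+\lambda_W$ (on the mean-zero complement), so $\min\{\lambda_V,\lambda_V+\lambda_W\}$ cannot be improved, whereas the paper's $\min\{\lambda_V,\lambda_V+2\lambda_W\}$ is not tight when $\lambda_W<0$. In short: same skeleton, but your identity-plus-Cauchy--Schwarz replaces the paper's elementary quadratic bound and buys the optimal modulus; the price is only one extra line of algebra.
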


\begin{proof} 
We note that 
\begin{flalign*}
\boldsymbol{\Psi} (\theta \boldsymbol{x} + &(1-\theta)\boldsymbol{y}) = \sum_{i=1}^N\left(V(\theta x^i+(1-\theta)y^i) +  \frac{1}{2N}\sum_{\substack{j=1}}^{N}  W (\theta(x^i-x^j)+(1-\theta)(y^i-y^j))\right) \\ 
&\leq \sum_{i=1}^N \bigg( \theta \, V(x^i) +(1-\theta) V(y^i)  - \frac{\lambda_V}{2} \theta (1-\theta) \big\lvert x^i - y^i \big\rvert^2 \\ 
&\qquad+ \frac{1}{2N}\sum_{\substack{j=1}}^{N}  \theta \, W (x^i-x^j) + (1-\theta)\, W(y^i-y^j) - \frac{\lambda_W}{2} \theta(1-\theta) \big\lvert (x^i - x^j) - (y^i - y^j) \big\rvert^2 \bigg)\,.
\end{flalign*}
We consider the metric on the product space such that iff $\Rd$ is a~space with Euclidean metric, then the product space $\mathbb{R}^{Nd}$ has the metric given by the relation 
\begin{equation*}
\big|\boldsymbol{x}-\boldsymbol{y} \big|^2 = \sum_{i=1}^N \lvert x^i-y^i \rvert^2 ; \quad x^i,\, y^i \in \Rd, \quad \boldsymbol{x} = (x^1, \ldots, x^N), \, \boldsymbol{y} = (y^1, \ldots, y^N)  \in \mathbb{R}^{Nd} \, .
\end{equation*}
Then we apply this and consider that the norm on the right-hand side is zero for $i=j$
\begin{flalign*}
\boldsymbol{\Psi} (\theta \boldsymbol{x} + (1-\theta)\boldsymbol{y}) &\leq \theta \boldsymbol{\Psi}(\boldsymbol{x}) + (1-\theta) \boldsymbol{\Psi} (\boldsymbol{y}) - \frac{\lambda_V}{2} \theta(1-\theta) \, \big\lvert \boldsymbol{x} - \boldsymbol{y} \big\rvert^2  \\ &\qquad - \frac{1}{N} \sum_{\substack{i,j=1 \\ i \neq j}}^{N} \frac{\lambda_W}{2} \theta(1-\theta)  \big\lvert (x^i - y^i) - (x^j - y^j) \big\rvert^2 \, .
\end{flalign*}
Let us split it into the cases $\lambda_W \geq 0$ and $\lambda_W < 0$.
\begin{itemize}
\item [$\lambda_W \geq 0$:] We consider that term $\tfrac{\lambda_W}{2} \theta (1-\theta) \lvert (x^i-y^i) - (x^j-y^j) \rvert^2$ is nonnegative and we can conclude that the function $\boldsymbol{\Psi}$ is $\lambda_V$-convex.
\item [$\lambda_W < 0$:] We consider the triangle inequality $|a-b| \leq 2 |a|^2 + 2 |b|^2 $ and recall that for this case $-\lambda_W = |\lambda_W|$ such that 
\begin{align*}
&- \frac{1}{N} \sum_{\substack{i,j=1 \\ i \neq j}}^{N} \frac{\lambda_W}{2} \theta (1-\theta) \big\lvert (x^i-y^i) - (x^j-y^j) \big\rvert^2\leq - \frac{1}{N} \sum_{\substack{i,j=1 \\ i \neq j}}^{N} \frac{\lambda_W}{2} \theta (1-\theta) \big(2\lvert x^i-y^i \rvert^2 + 2\lvert x^j-y^j \rvert^2 \big) \\ 
&\qquad\quad\leq 
-  \frac{2\lambda_W}{2} \theta (1-\theta) \frac{1}{N} \sum_{\substack{i,j=1 \\ i \neq j}}^{N} \big(\lvert x^i-y^i \rvert^2 + \lvert x^j-y^j \rvert^2 \big)
\leq -\frac{2 \lambda_W}{2} \theta(1-\theta) \lvert x-y \rvert^2 \, .\qedhere 
\end{align*}
\end{itemize}
\end{proof}


\section*{Acknowledgments} 
The authors were supported by NCN (Narodowe Centrum Nauki) grant 2018/31/B/ST1/00253.

During the final preparations of this paper, Benko was hosted by the Department of Mathematical Sciences at NTNU. The travel was partially supported by the project Pure Mathematics in Norway, funded by Trond Mohn Foundation and Troms{\o} Research Foundation.
Benko was also supported by internal grant of specific research FSI-S-23-8161. The authors acknowledge also the support of Initiative of Excellence at University of Warsaw.

Finally, we would like to thank the referee for suggestions to improve the paper. 

\bibliographystyle{abbrv}
\bibliography{bib}

\end{document}